\documentclass[a4paper,11pt,reqno]{amsart}

\usepackage[T1]{fontenc}
\usepackage{amsmath,amssymb,amsthm,mathtools,hyperref,geometry,cleveref,mathrsfs,xcolor,microtype,tabularx,array,multirow,float}

\allowdisplaybreaks

\definecolor{DarkBlue}{rgb}{0.1,0.1,0.55}
\definecolor{DarkRed}{rgb}{0.55,0.1,0.1}

\hypersetup{colorlinks=true,linkcolor=DarkBlue,citecolor=DarkRed,urlcolor=DarkBlue}

\numberwithin{equation}{section}
\theoremstyle{plain}
\newtheorem{theorem}{Theorem}[section]
\newtheorem{proposition}[theorem]{Proposition}
\newtheorem{corollary}[theorem]{Corollary}
\newtheorem{lemma}[theorem]{Lemma}
\newtheorem{remark}[theorem]{Remark}
\newtheorem*{yauthm}{Yau's classification theorem}
\newtheorem*{calabithm}{Calabi's classification theorem}
\newtheorem*{simon}{Simon's conjecture}

\newcommand{\R}{\mathbb{R}}
\newcommand{\tr}{\operatorname{tr}}
\newcommand{\HH}{\mathbf{H}}
\newcommand{\wt}{\tilde}
\newcommand{\mS}{\mathbb{S}}
\newcommand{\Sn}{\mathbb{S}^N}
\newcommand{\B}{\mathcal{\tilde{B}}}
\newcommand{\C}{\mathcal{\tilde{C}}}
\newcommand{\ta}{\tilde{a}}
\newcommand{\tb}{\tilde{b}}
\newcommand{\tri}{\triangle}
\newcommand{\thh}{\tilde h}
\newcommand{\tS}{\tilde S}

\newcommand{\tE}{\tilde E}
\newcommand{\Hess}{\operatorname{Hess}}

\title[Pinching rigidity of PMC surfaces in spheres]{Pinching rigidity of surfaces with parallel mean curvature vector in spheres}

\author[W. R. Ding]{Weiran Ding$^{1}$}
\address{$^{1}$School of Mathematical Sciences, South China Normal University, Guangzhou 510000, P. R. CHINA.}
\email{dingwr0806@m.scnu.edu.cn}

\author[J. Q. Ge]{Jianquan Ge$^{2}$}
\address{$^{2}$School of Mathematical Sciences, Laboratory of Mathematics and Complex Systems, Beijing Normal University, Beijing 100875, P. R. CHINA.}
\email{jqge@bnu.edu.cn}

\author[F. G. Li]{Fagui Li$^{3,*}$}
\address{$^{3,*}$Frontier Interdisciplinary Domain, Beijing Institute of Technology, Zhuhai 519088, P. R. CHINA.}
\email{lifagui@bitzh.edu.cn}

\subjclass[2020]{53C24, 53C40, 53C42}
\keywords{Parallel mean curvature vector, surface in a sphere, pinching theorem, Simons identity}
\thanks{* the corresponding author.}

\begin{document}

\begin{abstract}
	Inspired by the Simon conjecture for minimal surfaces in spheres, we study closed surfaces with parallel mean curvature vector and positive Gaussian curvature immersed in unit spheres. Let $h$ be the second fundamental form, let $\HH$ be the mean curvature vector field, and set $\wt h=h-\HH g$ and $\wt S=|\wt h|^2=|h|^2-2H^2$, where $H=|\HH|$ and $g$ is the induced Riemannian metric on the surface $M$. We establish three Simons-type integral identities for $\wt S$, which extend the first, second and third gap identities in the minimal case. As applications, we obtain the first two sharp endpoint gaps and several rigidity and oscillation estimates in the third interval. We further characterize the endpoint cases by combining these identities with the classification theorems of Calabi and Yau.
\end{abstract}

\maketitle

\section{Introduction}

In 1967, E. Calabi \cite{Cala67} studied minimal immersions of $S^2(K)$ with constant Gaussian curvature $K$ into spheres $\Sn\left(\frac{1}{r^2}\right)$ with radius $r$. In particular, we write $\Sn=\Sn(1)$. Such immersions were classified, up to rigid motions, by the discrete values of the curvature. More precisely, Calabi proved the following classification theorem:

\begin{calabithm}[\cite{Cala67}, Theorem 5.2]
	Let $M$ be a $2$-sphere with a Riemannian metric with constant curvature $K$, and let $f:S^2(K)\to\Sn\left(\frac{1}{r^2}\right)\subset\mathbb{R}^{N+1}$ be an isometric, minimal immersion of $M$ into the sphere of radius $r$, such that the image is not contained in any hyperplane of $\mathbb{R}^{N+1}$. Then
	\begin{enumerate}
		\item there exists an integer $s\ge 1$ such that $N=2s$ and the value of $K$ is uniquely determined at the value $$K=\frac{2}{s(s+1)r^2};$$
		\item the immersion $f$ is uniquely determined up to a rigid rotation of $\mathbb{S}^N$, and the $N+1$ components of the vector $f$ are a suitably normalized basis for the spherical harmonics of order $s$ on $M$.
	\end{enumerate}
\end{calabithm}
These examples are usually called Calabi's $2$-spheres. Later, M. do Carmo and N. Wallach \cite{doCa71} and independently S. S. Chern \cite{Cher70} obtained the same classification, using representation theory and the method of moving frames, respectively. Motivated by these results, U. Simon proposed a quantization conjecture \cite{Kozl84,LiSi03} for the Gaussian curvature of closed minimal surfaces in unit spheres.
\begin{simon}
	Let $M$ be a closed surface minimally immersed in  $\mathbb{S}^N$ such that the image is not contained in any hyperplane of $\mathbb{R}^{N+1}$. Using $$2K=2-S,$$ where $S=|h|^2$ is the squared norm of the second fundamental form, if $S(s)\leq S\leq S(s+1)$ for an $s\in\mathbb{N}$, where $$S(s)\coloneqq\frac{2(s-1)(s+2)}{s(s+1)},$$ then either $S=S(s)$ or $S=S(s+1)$, and thus the immersion is one of Calabi's $2$-spheres with the dimension of the ambient space $N=2s$ or $N=2s+2$, respectively.
\end{simon}
The first and second gaps were proved in the classical works of Simon and others \cite{Benk79,Kozl84}, and in our previous paper \cite{Ding25,Ding26} we also gave a unified treatment of the first and second gaps and obtained partial answers to the third gap. We also refer to \cite{Bolt88} for some related results on higher gaps of the Simon conjecture.\par
We also recall another classical rigidity problem closely related to gap phenomena in spheres, namely Chern's conjecture; see, among others, \cite{Chan93a,CDK70,Ding11,Peng83a,Peng83b,Tang23,XuXu17,Yang98}. It predicts that the possible constant values of the scalar curvature of closed minimal hypersurfaces in a unit sphere form a discrete set. In terms of the squared norm of the second fundamental form, this can be interpreted as a discreteness problem for constant values of $S=|h|^2$. There have also been several extensions and related results for hypersurfaces with constant mean curvature in spheres; see, among others, \cite{Alen94,Chan93b,Chen90,LeiX21,XuXu13}. Since constant mean curvature is equivalent to parallel mean curvature vector in the hypersurface case, these results provide one of the motivations for studying gap and rigidity phenomena for submanifolds with parallel mean curvature vector.\par
The purpose of the present paper is to extend the first three gaps to surfaces with parallel mean curvature vector in spheres. Let $M$ be a closed surface immersed in $\Sn$ with mean curvature vector field $\HH$. We assume that $\nabla^{\perp}\HH=0$, namely, $M$ has parallel mean curvature vector. For brevity, surfaces with parallel mean curvature vector will occasionally be called PMC surfaces. In this setting the natural object is no longer the original second fundamental form $h$, but its traceless part $$\wt h=h-\HH g,$$ where $g$ denotes the induced Riemannian metric on the surface $M$. If we write $\wt S=|\wt h|^2$, then $$\wt S=S-2H^2,$$ where $H=|\HH|$ is constant. Moreover, the Gauss equation becomes $$2K=2-S+4H^2=2(1+H^2)-\wt S\coloneqq 2c-\wt S,$$ where $c\coloneqq 1+H^2$. For PMC surfaces, there is a classical reduction theorem of Yau. More precisely, Yau proved the following classification theorem in space forms.

\begin{yauthm}[\cite{Yau74}, Theorem 4]\label{YAU}
	Let $M^2$ be a surface with parallel mean curvature vector in a space form $\mathcal{N}$. Then either $M^2$ is a minimal surface of an umbilical hypersurface of $\mathcal{N}$ or $M^2$ lies in a three-dimensional umbilical submanifold of $\mathcal{N}$ and has constant mean curvature therein.
\end{yauthm}

We also mention that B. Y. Chen independently obtained important classification results for surfaces with parallel mean curvature vector \cite{Chen73a,Chen73b,Chen10}. In this sense, the theory of PMC surfaces with parallel mean curvature in space forms is closely tied to the theory of minimal surfaces. Yau's classification theorem is a more important motivation for the present work. It suggests that the endpoint phenomena for PMC surfaces should be governed by the same Calabi's $2$-spheres, after passing to the appropriate totally umbilical sphere.\par
Combining the classification theorems of Yau and Calabi, we obtain a natural geometric characterization of PMC surfaces. These models determine the endpoint values of the quantity $\wt S$ which appear in the gap problem. We record this consequence first, since it provides the geometric background for the pinching results below.

\begin{proposition}\label{yauchara}
	Let $M^2$ be a closed surface immersed in $\mathbb S^N$ with parallel mean curvature vector. Assume that $\wt S$ is one of the endpoint values $$0,\quad\frac43c,\quad\frac53c,\quad\frac95c,\quad\cdots,\quad\frac{2(s-1)(s+2)}{s(s+1)}c,\quad\cdots.$$ Then, after passing to the orientable double cover if necessary, $M$ is one of the following:
	\begin{enumerate}
		\item $K=1+H^2$. In this case $M$ is totally umbilical in $\Sn$.
		\item $K=\frac{2(1+H^2)}{s(s+1)}$ for $s=2,3,4,\cdots$. Then $M$ is obtained by Calabi's $2$-spheres $$\Psi_s:S^2\left(\frac{2c}{s(s+1)}\right)\to\mathbb{S}^{2s}(c)\subset\Sn,\quad s=2,3,4,\cdots,$$ where the last inclusion is totally umbilical.
	\end{enumerate}
\end{proposition}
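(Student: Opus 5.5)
The plan is to use Yau's classification theorem to reduce to one of two model situations, and then let the constancy of $\wt S$ (hence of $K=c-\wt S/2$) do the rest.

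\emph{Case $\wt S=0$.} The surface is totally umbilical in $\Sn$. Then $K=c=1+H^2$ is constant, which gives item (1).

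\emph{Case $\wt S=\frac{2(s-1)(s+2)}{s(s+1)}c$ with $s\ge2$.} Here $K=c-\wt S/2$. A direct computation gives
\[
K=c\Bigl(1-\tfrac{(s-1)(s+2)}{s(s+1)}\Bigr)=\tfrac{2c}{s(s+1)}>0,
\]
so $M$ has constant positive curvature. After passing to the orientable double cover, $M$ is therefore a round $2$-sphere $S^2\bigl(\tfrac{2c}{s(s+1)}\bigr)$. We then apply Yau's classification theorem, which leaves two alternatives.

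\emph{Alternative (a): $M$ is minimal in an umbilical hypersurface of $\Sn$.} The umbilical hypersurface is a small sphere $\mathbb{S}^{N-1}(c')$. Since $M$ is minimal there, its mean curvature vector in $\Sn$ is just the umbilical normal of that hypersurface, whose length squared is $c'-1$. Hence $c'=1+H^2=c$. Iterating, or working directly, we replace $\mathbb{S}^{N-1}(c)$ by the smallest totally geodesic subsphere containing $M$. That subsphere is an $\mathbb{S}^m(c)$ in which $M$ is minimal and full, and it is again totally umbilical in $\Sn$. Rescaling, Calabi's classification theorem applies with $r^2=1/c$. It gives $m=2s'$ and $K=\tfrac{2c}{s'(s'+1)}$. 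Comparing with the value of $K$ computed above forces $s'=s$, so $M$ is $\Psi_s$ composed with the totally umbilical inclusion. This is item (2).

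\emph{Alternative (b): $M$ lies in an umbilical $3$-dimensional $\mathbb{S}^3(c'')$ with constant mean curvature there.} In this setting $\wt h$ has only one normal direction. Gauss's equation in $\mathbb{S}^3(c'')$ gives $K=c''+H_0^2-\tfrac12\wt S$. The Simons identity for CMC surfaces in a $3$-dimensional space form gives
\[
\tfrac12\Delta\wt S=|\nabla\wt h|^2+2K\wt S .
\]
With $\wt S$ constant and $K>0$, this forces $\wt S=0$, contradicting $\wt S>0$ when $s\ge2$. (Alternatively, a constant-curvature CMC sphere in $\mathbb{S}^3$ is totally umbilical by Hopf's theorem.) So alternative (b) only produces the case $\wt S=0$, which is already covered by item (1).

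\emph{Main obstacle.} The delicate step is the bookkeeping in alternative (a). One has to check that the curvature of the umbilical sphere is exactly $c=1+H^2$. One also has to check that reducing to a full totally geodesic subsphere preserves minimality, and that the normalization in Calabi's theorem matches: a radius-$r$ sphere with $r^2=1/c$, and an even dimension $2s$ sitting totally umbilically in $\Sn$. The orientability caveat is handled simply by lifting to the double cover before invoking the classification of constant-curvature surfaces.
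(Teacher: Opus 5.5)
Your proof is correct and follows the paper's route: Yau's dichotomy, then identifying the umbilical hypersurface as a sphere of curvature $c=1+H^2$ with $\wt h=h^Q$, then Calabi's theorem. There are two genuine differences.

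\begin{itemize}
\item In the $3$-dimensional alternative you rule out $\wt S\neq0$ pointwise, using $\frac12\Delta\wt S=|\nabla\wt h|^2+2K\wt S$ together with constancy of $\wt S$ and $K>0$. The paper instead appeals to Hopf's theorem. Your identity also follows from the paper's Simons identity, since with a single normal direction $|\wt A|^2=\wt S^2$ and $\wt\rho^{\perp}=0$. This route is more elementary and needs no topological input.
\item You make explicit the reduction to a full totally geodesic subsphere before invoking Calabi, a step the paper leaves implicit.
\end{itemize}

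One small point needs repair. The dichotomy in Yau's theorem is really a statement for $H\neq0$, and it fails as literally stated when $H=0$. For example, the Veronese surface in $\mathbb{S}^4$ has $\wt S=\frac43=\frac43c$, so it falls under your case $s=2$. It is not contained in any affine hyperplane of $\R^5$, so it lies neither in an umbilical hypersurface nor in an umbilical $3$-sphere. You should therefore split off $H=0$ first, as the paper does. In that case $M$ is minimal in $\Sn=\mathbb{S}^N(c)$ with $c=1$, and your alternative (a) argument goes through verbatim: pass to the smallest totally geodesic subsphere containing $M$ and apply Calabi.
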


Motivated by the classification, we now investigate the corresponding pinching problem. Define $$S_-\coloneqq\frac{27}{59}+\frac{4\sqrt{151705}}{885}\cos\left[\frac13\arccos\left(-\frac{1650478\sqrt{151705}}{920576281}\right)\right]$$ and $$S_+\coloneqq1+\sqrt[3]{\frac{75357+\sqrt{12014474010}}{30375}}+\sqrt[3]{\frac{75357-\sqrt{12014474010}}{30375}}$$ be the unique zero of $$\Theta_-\coloneqq7965x^3-10935x^2-13509x+15295$$ and $$\Theta_+\coloneqq1125x^3-3375x^2+9790x-13122$$ in $\left(\frac53,\frac95\right)$, respectively. Numerically, $$S_-=1.707551424943195707\cdots,\qquad S_+=1.785237928679242784\cdots.$$ Finally, define $$G(x)\coloneqq\frac{120x(3x-4)(9-5x)}{2225x^2-420x-1701},\qquad x\in\left[\frac53,\frac95\right].$$ Under suitable curvature pinching assumptions, our main results verify the first two gaps in the PMC setting and provide additional rigidity results in the range corresponding to the third gap.

\begin{theorem}\label{main1}
	Let $M$ be a closed surface immersed in $\Sn$ with parallel mean curvature vector. Set $\wt S = |\wt h|^2$ and $c=1+H^2$. Then we have (see Theorems \ref{1stint}, \ref{2ndint}, \ref{s3-left-gap}, \ref{s3-interior} and \ref{s3-right-gap})
	\begin{enumerate}
		\item\label{main1.1} if $0\le\wt S\le\frac{4}{3}c$, then $\wt S\equiv 0$ or $\wt S\equiv\frac{4}{3}c$. Moreover, if $\wt S\equiv 0$, then $M$ is totally umbilical in $\Sn$ with $K\equiv 1+H^2$. If $\wt S\equiv\frac{4}{3}c$, then $M$ is Calabi's $2$-sphere with $K\equiv\frac{1}{3}(1+H^2)$;
		\item\label{main1.2} if $\frac{4}{3}c\le\wt S\le\frac{5}{3}c$, then $\wt S\equiv\frac{4}{3}c$ or $\wt S\equiv\frac{5}{3}c$. Moreover, if $\wt S\equiv\frac{4}{3}c$, then $M$ is Calabi's $2$-sphere with $K\equiv\frac{1}{3}(1+H^2)$. If $\wt S\equiv\frac{5}{3}c$, then $M$ is Calabi's $2$-sphere with $K\equiv\frac{1}{6}(1+H^2)$;
		\item if $\frac{5}{3}c\le\wt S\le\frac{9}{5}c$, then we have the followings:
			\begin{itemize}
				\item[(3a)] if $\frac53c\le \tS<cS_-<\frac95c$, then $\tS\equiv\frac53c$, and $M$ is Calabi's $2$-sphere with $K\equiv\frac{1}{6}(1+H^2)$;
				\item[(3b)] if $\frac53c\le\tS\le\frac95c$ and $\tS\not\equiv\frac53c$, then
					\begin{equation*}
						\tS_{\max}-\tS_{\min}\ge cG\left(\frac{\tS_{\min}}{c}\right)=\frac{120\tS_{\min}(9c-5\tS_{\min})(3\tS_{\min}-4c)}{2225\tS_{\min}^2-420c\tS_{\min}-1701c^2},
					\end{equation*}
					where $\tS_{\max}=\sup_{p\in M}\tS(p)$ and $\tS_{\min}=\inf_{p\in M}\tS(p)$;
				\item[(3c)] if $cS_+<\tS\le \frac95c$, then $\tS\equiv\frac95c$, and $M$ is Calabi's $2$-sphere with $K\equiv\frac{1}{10}(1+H^2)$.
			\end{itemize}
	\end{enumerate}
\end{theorem}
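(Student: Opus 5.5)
The plan is to transplant the minimal-case machinery from \cite{Ding25,Ding26} to the PMC setting, working throughout with the traceless form $\wt h$ and the constant $c=1+H^2$. The first step is a Simons-type computation for $\wt h$. Since $\nabla^\perp\HH=0$ and $H$ is constant, $\wt h$ satisfies the Codazzi equation, so $\nabla \wt h$ is totally symmetric. On a surface in $\Sn$ the Ricci equation involves the commutators of the shape operators $\wt A_\alpha$. Moreover, every shape operator in the direction of $\HH$ commutes with the others, because the normal curvature in that direction vanishes. Using these facts, I would derive
\[
\tfrac12\Delta\wt S=|\nabla\wt h|^2+\wt S\,(2c-\wt S)-\tfrac12\sum_{\alpha,\beta}|[\wt A_\alpha,\wt A_\beta]|^2+(\text{terms involving }\HH),
\]
and I expect the $\HH$-terms to be absorbed into the constant $c$, so that $\wt S$ effectively replaces $S$ and $c$ replaces $1$.

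The second step is to use complex coordinates. Because $K>0$ wherever $\wt S<2c$, I would write $\wt h$ through holomorphic-type differentials and introduce the higher fundamental forms (Calabi's construction) together with their squared norms. This should yield the first, second and third integral identities. Each identity integrates a combination $\int_M (\text{nonnegative quantity})\cdot P_k(\wt S)$ plus gradient terms, where $P_k$ changes sign exactly at the endpoints $0,\tfrac43c,\tfrac53c,\tfrac95c$. Closedness of $M$ and Stokes' theorem kill the divergence terms.

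For parts (1) and (2), I would argue as follows. Under the pinching $0\le\wt S\le\tfrac43c$ (respectively $\tfrac43c\le\wt S\le\tfrac53c$), every integrand has a fixed sign, so it must vanish pointwise. Hence either $\wt h$ (or the next higher form) vanishes identically, or $\wt S$ sits at the endpoint. In the endpoint case $K$ is constant, and Proposition~\ref{yauchara} (Yau plus Calabi) identifies $M$ as totally umbilical or as Calabi's $\Psi_2$ or $\Psi_3$ inside $\mathbb S^{2s}(c)$.

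For part (3), the third identity is no longer sign-definite. It contains a term like $|\nabla\wt S|^2$ with a bad sign, which must be controlled by the good terms. The plan is to bound $|\nabla\wt S|^2$ pointwise by a multiple of the third-order norm using Cauchy--Schwarz and the Codazzi symmetry. This leaves an integrand that is a rational function of $\wt S/c$ times nonnegative quantities, and the positivity thresholds of those rational functions are the cubic roots $S_\pm$ of $\Theta_\pm$. Near $\tfrac53c$ this gives rigidity (3a), and near $\tfrac95c$ it gives (3c). For (3b), I would apply a maximum principle at a minimum point of $\wt S$ combined with the identity, or integrate $(\wt S-\wt S_{\min})$ against the identity. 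The aim is an inequality of the form $\wt S_{\max}-\wt S_{\min}\ge G(\wt S_{\min}/c)\,c$, where $G$ arises from optimizing the resulting quadratic inequality.

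I expect the main obstacle to be the third identity and the sharp pointwise inequality between $|\nabla\wt S|^2$ and the higher fundamental form norms. The constants in $\Theta_\pm$ and $G$ depend delicately on that estimate. My first attempt would be to copy the minimal-case computation verbatim with $1\to c$, and to check that the scaling is exact, since the ambient sphere $\mathbb S^{2s}(c)$ has curvature $c$.
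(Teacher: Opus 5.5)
Your outline for parts (1) and (2) matches the paper, provided you state explicitly the fact that drives everything. That fact is the isotropy of $\wt h$, namely $\langle\ta,\tb\rangle=0$ and $|\ta|=|\tb|$. It follows from the vanishing of the holomorphic quartic differential $\langle\wt h(\partial_z,\partial_z),\wt h(\partial_z,\partial_z)\rangle\,dz^4$, or from the paper's integral argument in Theorem~\ref{1stlap}. It turns Simons' identity into $\frac12\tri\tS=\B_1-\frac12\tS(3\tS-4c)$.

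The genuine gap is in part (3). The pointwise estimate you describe does exist: it is Lemma~\ref{infc2c3}, $\C_2+\C_3\ge\frac98\tS|\nabla\tS|^2$. After using it, however, the third identity only gives
\[
\int_M\tS(3\tS-4c)(3\tS-5c)(5\tS-9c)\ge\int_M\Bigl[\tfrac34(25\tS-42c)|\nabla\tS|^2-\tfrac54(\tri\tS)^2\Bigr].
\]
The term $-\frac54(\tri\tS)^2$, with $(\tri\tS)^2=(2\B_1-\tS(3\tS-4c))^2$ quartic in $\nabla\wt h$, cannot be absorbed pointwise by anything left in the integrand. The gradient term cannot either, since its coefficient is negative for $\tS<\frac{42}{25}c$. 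So no pointwise rational function of $\tS/c$ decides the sign, and $S_\pm$ and $G$ are not positivity thresholds of such a function. Your argument therefore cannot close under any pointwise pinching strictly inside $(\frac53c,\frac95c)$.

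What is missing is a \emph{global} upper bound for $\int_M(\tri\tS)^2$, obtained as follows.
\begin{enumerate}
\item Write $\int(\tri\tS)^2=2\int\B_1\tri\tS+\int(6\tS-4c)|\nabla\tS|^2$ and move the Laplacian onto $\B_1$.
\item Insert the formula for $\frac12\tri\B_1$ (Lemma~\ref{eq:pmc-s3-half-delta-B1}) and $\B_2=\frac14\tS(3\tS-4c)^2+\C_1$. This gives $\int(\tri\tS)^2=\int[(17\tS-18c)|\nabla\tS|^2+4\tS\C_1-2\tS^2(3\tS-4c)(3\tS-5c)]$.
\item Bound the weighted term by $\tS\C_1\le\tS_{\max}\C_1$, and evaluate $\int\C_1$ by the second identity.
\item Rewrite the remaining gradient term as $\int(w-\tS)(\cdots)\tri\tS$ with a free constant $w$, and absorb it by Cauchy--Schwarz and Young with a weight $t\in(0,1)$ (Lemma~\ref{estimatelem}).
\end{enumerate}
Only then do you get inequalities involving $\tS_{\max}$, and, through the choice of $w$ (e.g.\ $w\le\tS_{\min}$ or $w=\frac95c$), $\tS_{\min}$. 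Optimizing in $t$ then produces:
\begin{itemize}
\item $\Theta_-$, from $\tS_{\max}$;
\item $\Theta_+$, from $\tS_{\min}$ with $w=\frac95c$ and $t=\frac4{17}$;
\item $G$, with $t=t_0(w)$ and $w=\tS_{\min}$.
\end{itemize}
This is why (3a), (3c) and (3b) are phrased through $\tS_{\max}<cS_-$, $\tS_{\min}>cS_+$ and the oscillation $\tS_{\max}-\tS_{\min}$. A maximum-principle argument at a minimum point of $\tS$, as you suggest for (3b), has no access to this global information.
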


Equivalently, since $S=\wt S+2H^2$, we have the following formulation in terms of the original second fundamental form. The conclusions of Theorem \ref{main1} can be summarized in terms of both $\wt S$ and $S$ as follows.

\begin{table}[H]
	\centering
	\caption{Endpoint values and geometric models.}
	\vspace{0.2cm}
	\renewcommand{\arraystretch}{1.45}
	\begin{tabular}{%
		>{\centering\arraybackslash}m{2cm}|
		>{\centering\arraybackslash}m{6cm}|
		>{\centering\arraybackslash}m{2.5cm}|
		>{\centering\arraybackslash}m{3cm}}
		\hline
		Case & Pinching interval & Endpoint value & Geometric model \\
		\hline
		\multirow{2}{1.1cm}{\centering First gap} & \multirow{2}{6cm}{\centering $0\leq \wt S\leq \frac{4}{3}c$ \\ $2H^2\leq S\le\frac{4}{3}+\frac{10}{3}H^2$} & $\wt S\equiv 0$ $S\equiv 2H^2$ & totally umbilical with $K\equiv c$ \\ \cline{3-4} & & $\wt S\equiv\frac{4}{3}c$ $S\equiv\frac{4}{3}+\frac{10}{3}H^2$ & Calabi's $2$-sphere with $K\equiv\frac{c}{3}$ \\
		\hline
		\multirow{2}{1.1cm}{\centering Second gap} & \multirow{2}{6cm}{\centering $\frac{4}{3}c\le\wt S\le\frac{5}{3}c$ \\ $\frac{4}{3}+\frac{10}{3}H^2\le S\leq \frac{5}{3}+\frac{11}{3}H^2$} & $\wt S\equiv\frac{4}{3}c$ $S\equiv\frac{4}{3}+\frac{10}{3}H^2$ & Calabi's $2$-sphere with $K\equiv\frac{c}{3}$ \\ \cline{3-4} & & $\wt S\equiv\frac{5}{3}c$ $S\equiv\frac{5}{3}+\frac{11}{3}H^2$ & Calabi's $2$-sphere with $K\equiv\frac{c}{6}$ \\
		\hline
		Third gap (left) & $\frac{5}{3}c\le\wt S<cS_-$ \newline $\frac{5}{3}+\frac{11}{3}H^2\le S<(1+H^2)S_-+2H^2$ & $\wt S\equiv\frac{5}{3}c$ $S\equiv\frac{5}{3}+\frac{11}{3}H^2$ & Calabi's $2$-sphere with $K\equiv\frac{c}{6}$ \\
		\hline
		Third gap (right) & $cS_+<\wt S\le\frac{9}{5}c$ \newline $(1+H^2)S_++2H^2<S\le\frac{9}{5}+\frac{19}{5}H^2$ & $\wt S\equiv\frac{9}{5}c$ $S\equiv\frac{9}{5}+\frac{19}{5}H^2$ & Calabi's $2$-sphere with $K\equiv\frac{c}{10}$ \\
		\hline
	\end{tabular}
\end{table}

A claim of Theorem \ref{main1} \eqref{main1.1}-\eqref{main1.2} was mentioned in \cite{XuXu24,XuXupre} by H. W. Xu and Z. Y. Xu, using Yau's classification theorem \cite{Yau74}, while our proof is based on Simons-type integral identities. Yau's classification theorem is then used to identify the geometric models at the endpoints. A related first gap result was obtained by Li and Simon \cite{LiSi03} in a more general setting. More precisely, we obtain the following three formulas, which are the PMC analogues of the first, second, and third gap formulas in our previous paper \cite{Ding25, Ding26}. When $H=0$, they recover the corresponding formulas for minimal surfaces in spheres.

\begin{theorem}\label{main3}
	Let $M$ be a closed surface immersed in $\Sn$ with parallel mean curvature vector and positive Gaussian curvature. Then we have (see Theorems \ref{1stint}, \ref{2ndint}, \ref{3rdint} and \ref{new3rdint})
	\begin{enumerate}
		\item the first integral formula $$\int_M\wt S(3\wt S-4c)=2\int_M\B_1\ge 0;$$
		\item the second integral formula $$\int_M\wt S(3\wt S-4c)(3\wt S-5c)=2\int_M\left[\B_2-\frac{1}{4}\wt S(3\wt S-4c)^2+\frac{1}{2}|\nabla\wt S|^2\right]\ge 0;$$
		\item the third integral formula
			\begin{equation*}
				\begin{aligned}
					&\hspace{1.3em}\int_M\tS(3\tS-4c)(3\tS-5c)(5\tS-9c)\\
					&=2\int_M\left[\B_3-\frac18\tS(3\tS-4c)(45\tS^2-144c\tS+116c^2)+\frac18(65\tS-166c)|\nabla\tS|^2-\frac58(\triangle\tS)^2\right]\\
					&\ge\int_M\left[\frac34(25\tS-42c)|\nabla\tS|^2-\frac54(\tri\tS)^2\right];
				\end{aligned}
			\end{equation*}
	\end{enumerate}
	where $\B_1=|\nabla\wt h|^2$, $\B_2=|\nabla^2\wt h|^2$ and $\B_3=|\nabla^3\wt h|^2$ are the squared lengths of the first, second and third covariant derivatives of the second fundamental form, respectively.
\end{theorem}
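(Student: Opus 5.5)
Everything will be carried out in isothermal (complex) coordinates, using the holomorphic structure of PMC surfaces as in the minimal case of \cite{Ding25,Ding26}. Since $\nabla^\perp\HH=0$, the Codazzi equation shows that $\wt h$ is a Codazzi tensor with values in the normal bundle. Its $(2,0)$-part $\phi=\wt h(\partial_z,\partial_z)$ is therefore a normal-valued section satisfying $\nabla^\perp_{\bar z}\phi=0$. This means $\wt h$ behaves exactly like the second fundamental form of a minimal surface. The only change is that the ambient curvature $1$ in the Gauss and Ricci equations is replaced by $c=1+H^2$. The key structural input is the Gauss equation $2K=2c-\wt S$ together with the Ricci equation for $R^\perp$ written in terms of $\wt h$. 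This is legitimate because $\HH$ is parallel, so it commutes with every $\wt h_{ij}$ in the normal curvature.

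\textbf{Step 1 (first formula).} I will derive the Simons-type identity
\begin{equation*}
\frac12\tri\wt S=|\nabla\wt h|^2+2K\wt S-|R^\perp|^2,
\end{equation*}
which holds for a traceless Codazzi normal-valued tensor on a surface. For surfaces one has $|R^\perp|^2$ expressed through $\wt S$ and $K$: specifically, $|R^\perp|^2$ equals $\wt S^2$ minus twice the squared norm of $\langle \wt h_{11},\wt h_{12}\rangle$ terms. In the form used in \cite{Ding25}, this yields $\frac12\tri\wt S=\B_1+\frac12\wt S(3\wt S-4c)\cdot(\text{sign})+\ldots$. I will then rearrange so that, after substituting $2K=2c-\wt S$, we obtain $\frac12\tri\wt S=\B_1-\frac12\wt S(3\wt S-4c)$. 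Integrating over the closed surface kills the Laplacian and gives (1).

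\textbf{Step 2 (second and third formulas).} I will iterate: compute $\tri\B_1$ and $\tri\B_2$ via the Ricci commutation formulas for $\nabla^k\wt h$ on the surface. The curvature terms are $K$ and $R^\perp$; by the holomorphicity of $\phi$ they reduce to polynomial expressions in $\wt S$ times $\B_k$, plus gradient terms. The positivity assumption $K>0$ allows the dimension-reduction argument: the higher normal spaces spanned by $\nabla^k\phi$ are well controlled, so $|\nabla^{k}\phi|^2$ can be expressed via $\wt S$ and its derivatives, as in the minimal case. Concretely, I will express $\B_1$ and $\B_2$ through $\wt S$, $|\nabla\wt S|^2$ and $\tri\wt S$ using Step 1 and its derivative. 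I will then multiply the pointwise identities by suitable polynomials in $\wt S$ and integrate, using integration by parts $\int f(\wt S)\tri\wt S=-\int f'(\wt S)|\nabla\wt S|^2$. The exact coefficients come from redoing the minimal computation of \cite{Ding25,Ding26} with $1$ replaced by $c$. Homogeneity in $(c,\wt S)$ guarantees this substitution is consistent.

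\textbf{Step 3 (inequalities).} Nonnegativity in (1) is immediate. For (2) and (3) I will use Kato/Cauchy–Schwarz-type lower bounds. The first is $\B_2\ge\frac14\wt S(3\wt S-4c)^2+\ldots$, obtained by decomposing $\nabla^2\wt h$ into its symmetric traceless part and its trace part (the trace being given by $\tri\wt h$, computed from the Simons identity). The second is an analogous bound for $\B_3$, which produces the stated right-hand side $\frac34(25\tS-42c)|\nabla\tS|^2-\frac54(\tri\tS)^2$.

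The main obstacle I expect is Step 2 for the third formula. This requires computing $\tri|\nabla^2\wt h|^2$ and controlling the normal curvature acting on $\nabla^2\wt h$, which spans a higher osculating space. I will handle it by working with the holomorphic sections $\phi$, $\nabla_z\phi$, $\nabla_z^2\phi$ and their norms, as in the minimal theory, where $K>0$ ensures the relevant Frenet-type structure.
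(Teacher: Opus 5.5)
\textbf{The gap is in Step~1, and everything afterwards depends on it.}

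The identity $\frac12\tri\tS=\B_1-\frac12\tS(3\tS-4c)$ is not a rearrangement of the Simons identity, because $|R^\perp|^2$ is not a function of $\tS$ and $K$. Take $\ta=\wt h(e_1,e_1)$, $\tb=\wt h(e_1,e_2)$, and the normalization that makes your Simons identity correct, $|R^\perp|^2=\sum_{\alpha,\beta}(R_{\alpha\beta12})^2=8(|\ta|^2|\tb|^2-\langle\ta,\tb\rangle^2)$. Then
\[
\tfrac12\tS^2-|R^\perp|^2=2\left(|\ta|^2-|\tb|^2\right)^2+8\langle\ta,\tb\rangle^2\ge0 .
\]
So the identity you want is equivalent to $\ta\perp\tb$ and $|\ta|=|\tb|$ at every point. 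That is false in general. For the Clifford torus in $\mathbb{S}^3$ one has $R^\perp=0$, $\tS\equiv 2$ and $\B_1=0$, and formula (1) itself fails there.

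So this is a global fact, and it is exactly where $K>0$ is needed. You have the tool but never use it. Since $\nabla^\perp_{\bar z}\phi=0$, the quartic differential $\langle\phi,\phi\rangle\,dz^4$, which is proportional to $(|\ta|^2-|\tb|^2-2\sqrt{-1}\langle\ta,\tb\rangle)\,dz^4$, is holomorphic. By Gauss--Bonnet, $K>0$ makes the orientable double cover of $M$ a $2$-sphere, so this differential vanishes.

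The paper proves the same fact differently. It integrates the divergence $(\wt h^\alpha_{ijk}\tri\wt h^\alpha_{ij})_k$ to get $\int_M(2c-\tS)D\ge\frac12\int_M(2c-\tS)\tS^2$, with $D=8(|\ta|^2|\tb|^2-\langle\ta,\tb\rangle^2)$. It then compares this with $D\le\frac12\tS^2$, using $2c-\tS=2K>0$.

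The consequences $\tri\ta=\frac12(4c-3\tS)\ta$ and $\tri\tb=\frac12(4c-3\tS)\tb$ are the input for (2) and (3). Formula (3) also needs the next-order relations $\langle\ta_1,\ta_2\rangle=0$ and $|\ta_1|=|\ta_2|$. These come from the vanishing of the holomorphic differential $\langle\nabla_z\phi,\nabla_z\phi\rangle\,dz^6$ on a sphere, not from a Frenet-type structure.

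\textbf{Steps~2 and~3 defer everything to the minimal case, and contain one incorrect claim and one missing estimate.}

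First, $\B_2$ cannot be expressed through $\tS$, $|\nabla\tS|^2$ and $\tri\tS$. Only its trace part can: $\B_2=\frac14\tS(3\tS-4c)^2+\C_1$, where $\C_1=2\sum_\alpha[(\ta^\alpha_{11}-\ta^\alpha_{22})^2+(\ta^\alpha_{12}+\ta^\alpha_{21})^2]$ is an independent quantity. Formula (2) comes instead from the Bochner formula for $\frac12\tri\B_1$, in which $\B_2$ appears. That formula integrates to $\int_M\tS(3\tS-4c)(3\tS-5c)=\int_M(2\C_1+|\nabla\tS|^2)$.

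Second, for (3) the equality also needs $\int_M[2(\tri\tS)^2-2|\Hess\tS|^2+(\tS-2c)|\nabla\tS|^2]=0$ to eliminate $|\Hess\tS|^2$. The inequality is not a plain trace or Kato bound. The paper derives the intermediate identity
\[
\int_M\tS(3\tS-4c)(3\tS-5c)(5\tS-9c)=\int_M\left[\tfrac32(11\tS-21c)|\nabla\tS|^2-\tfrac54(\tri\tS)^2+2\C_2+2\C_3\right],
\]
where $\C_2+\C_3$ is the traceless remainder of $\B_3$. Using only $\C_2+\C_3\ge0$ misses the stated bound by $\frac94\int_M\tS|\nabla\tS|^2$. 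The missing ingredient is
\[
\C_2+\C_3\ge|\tE_1|^2+|\tE_2|^2=\tfrac98\tS|\nabla\tS|^2,
\]
where $\tE_1=\ta_{111}-\ta_{122}+\ta_{212}+\ta_{221}$ and $\tE_2=\ta_{112}+\ta_{121}+\ta_{222}-\ta_{211}$. These combinations are determined by differentiating the formulas for $\tri\ta$ and $\tri\tb$.

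Your remark that $\wt h$ satisfies the structure equations of a minimal surface in $\mathbb{S}^N(c)$ is correct, so local identities do transfer from the minimal case by homogeneity. But that transfers only the local computations; the global inputs above must be supplied separately.
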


Set $$\varepsilon_0\coloneqq G(S_+)=\frac{120S_+(3S_+-4)(9-5S_+)}{2225S_+^2-420S_+-1701}.$$ Numerically, $$\varepsilon_0=0.004619590030505\cdots>\frac1{217}.$$ By Theorem \ref{main1}, we have
\begin{theorem}\label{oscillation}
	Let $M$ be a closed surface immersed in $\Sn$ with parallel mean curvature vector. Assume that $\frac53+\frac{11}{3}H^2\le S\le\frac95+\frac{19}{5}H^2$. If $$S_{\max}-S_{\min}<\varepsilon_0(1+H^2),$$ then $S\equiv\frac53+\frac{11}{3}H^2$ or $S\equiv\frac95+\frac{19}{5}H^2$.
\end{theorem}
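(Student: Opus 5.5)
The plan is to derive Theorem \ref{oscillation} directly from Theorem \ref{main1}(3). First I translate everything into $\tS$. Since $H$ is constant and $S=\tS+2H^2$, we have $S_{\max}-S_{\min}=\tS_{\max}-\tS_{\min}$. The hypothesis $\frac53+\frac{11}3H^2\le S\le\frac95+\frac{19}5H^2$ becomes $\frac53c\le\tS\le\frac95c$, with $c=1+H^2$. So the assumption reads $\tS_{\max}-\tS_{\min}<\varepsilon_0 c$, and the two possible conclusions are $\tS\equiv\frac53c$ or $\tS\equiv\frac95c$.

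I argue by contradiction. Suppose $\tS\not\equiv\frac53c$. Then Theorem \ref{main1}(3b) applies and gives
\[
\tS_{\max}-\tS_{\min}\ge cG\!\left(\frac{\tS_{\min}}{c}\right).
\]
Set $x=\tS_{\min}/c\in[\frac53,\frac95]$. I split into two cases according to whether $x\le S_+$ or $x>S_+$.

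If $x>S_+$, then $\tS\ge\tS_{\min}>cS_+$ everywhere. Theorem \ref{main1}(3c) then yields $\tS\equiv\frac95c$, which is one of the allowed conclusions.

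If $x\le S_+$, I want $G(x)\ge G(S_+)=\varepsilon_0$. This gives $\tS_{\max}-\tS_{\min}\ge\varepsilon_0c$, contradicting the hypothesis. The key step is therefore a monotonicity or minimum property of $G$ on $[\frac53,S_+]$, and it is the main obstacle.

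To handle it, I first check the denominator. The quadratic $2225x^2-420x-1701$ has its positive root at about $0.97$, so it is positive on $[\frac53,\frac95]$, and $G$ is smooth there. The numerator $120x(3x-4)(9-5x)$ is positive on $[\frac53,\frac95)$ and vanishes at $\frac95$. My expectation is that $G$ increases and then decreases toward $0$ at $\frac95$. In that case its minimum over $[\frac53,S_+]$ is attained at one of the endpoints, and it suffices to compare $G(\frac53)$ with $G(S_+)$.

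Computing, $G(\frac53)=\frac{120\cdot\frac53\cdot1\cdot0}{\cdots}=0$, since $9-5\cdot\frac53=0$. So $G(\frac53)=0$, and the endpoint comparison fails on the left. The left part of the interval has to be handled by part (3a) instead. If $x<S_-$ we cannot conclude directly, because (3a) needs $\tS<cS_-$ everywhere, not just at the minimum.

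I therefore refine the argument using the smallness of the oscillation. Since $\varepsilon_0<S_--\frac53$ (numerically $S_--\frac53\approx0.0409$), assume $x<S_--\varepsilon_0$. Then $\tS_{\max}<\tS_{\min}+\varepsilon_0c<cS_-$, and (3a) gives $\tS\equiv\frac53c$, contradicting our assumption. So $x\in[S_--\varepsilon_0,S_+]$.

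It remains to verify $G\ge\varepsilon_0$ on $[S_--\varepsilon_0,S_+]$. This is a one-variable calculus check. I would compute $G'$, show its numerator, a polynomial of low degree, has a single zero in $(\frac53,\frac95)$, so that $G$ is unimodal. Then the minimum over the subinterval is attained at an endpoint. At the right endpoint $G(S_+)=\varepsilon_0$ by definition. At the left endpoint I would check numerically that $G(S_--\varepsilon_0)\ge\varepsilon_0$; this is plausible, since $G$ grows roughly like $40(x-\frac53)$ near $\frac53$.

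Establishing this unimodality and the endpoint inequality rigorously, with exact or interval arithmetic, is where I expect the real work lies. With it, every case gives $\tS\equiv\frac53c$ or $\tS\equiv\frac95c$, that is, $S\equiv\frac53+\frac{11}3H^2$ or $S\equiv\frac95+\frac{19}5H^2$.
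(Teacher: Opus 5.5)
Your argument is correct and is essentially the paper's proof. If $\tS_{\min}>cS_+$, you use the right endpoint gap. Otherwise, assuming $\tS\not\equiv\frac53c$, you combine Theorem~\ref{s3-interior} with $G(\tS_{\min}/c)\ge G(S_+)=\varepsilon_0$.

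The detour through (3a) and the point $S_--\varepsilon_0$ is harmless but unnecessary, and it comes from an arithmetic slip. In fact $9-5\cdot\frac53=\frac23\neq0$, so $G(\frac53)=\frac{75}{2126}\approx0.035$. Moreover, $G$ is strictly decreasing on $[\frac53,\frac95]$. Its numerator $120(-15x^3+47x^2-36x)$ is nonnegative there, with derivative $120(-45x^2+94x-36)<0$ for $x>\frac{94+\sqrt{2356}}{90}\approx1.584$. Its denominator is positive and increasing. Hence $G'$ has no zero in the interval, not a single one as you expected, and $G(x)\ge G(S_+)$ follows directly for every $x\le S_+$.
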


\begin{corollary}\label{noimmersion}
	Let $M$ be a closed surface immersed in $\Sn$ with parallel mean curvature vector. Then there exists no isometric immersion $\varPhi:M\to\mathbb{S}^N$ with parallel mean curvature vector, for any $N$, such that $$\frac{5}{3}+\frac{11}{3}H^2\le S\le \frac{9}{5}+\frac{19}{5}H^2,\quad S\not\equiv \frac{5}{3}+\frac{11}{3}H^2,$$ and $$S_{\max}<S_{\min}+\frac{120(S_{\min}-2H^2)(3S_{\min}-4-10H^2)(9-5S_{\min}+19H^2)}{2225(S_{\min}-2H^2)^2-420(1+H^2)(S_{\min}-2H^2)-1701(1+H^2)^2}.$$
\end{corollary}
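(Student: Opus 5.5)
This corollary follows from Theorem \ref{main1}(3b) once everything is rewritten in terms of $\tS$ and $c$. The relations $S=\tS+2H^2$ and $c=1+H^2$ give
\[
\frac53+\frac{11}{3}H^2=\frac53c+2H^2,\qquad \frac95+\frac{19}{5}H^2=\frac95c+2H^2 .
\]
So the hypothesis $\frac53+\frac{11}{3}H^2\le S\le \frac95+\frac{19}{5}H^2$ with $S\not\equiv\frac53+\frac{11}{3}H^2$ becomes exactly $\frac53c\le\tS\le\frac95c$ with $\tS\not\equiv\frac53c$. Since $H$ is constant, the shift by $2H^2$ does not affect the oscillation:
\[
S_{\max}-S_{\min}=\tS_{\max}-\tS_{\min},\qquad \tS_{\min}=S_{\min}-2H^2 .
\]

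I argue by contradiction. Suppose such an immersion $\varPhi$ exists. Substitute $\tS_{\min}=S_{\min}-2H^2$ and $c=1+H^2$ into the right-hand side of Theorem \ref{main1}(3b). The factor $3\tS_{\min}-4c$ becomes $3S_{\min}-6H^2-4-4H^2=3S_{\min}-4-10H^2$. The factor $9c-5\tS_{\min}$ becomes $9+9H^2-5S_{\min}+10H^2=9-5S_{\min}+19H^2$. The denominator is $2225(S_{\min}-2H^2)^2-420(1+H^2)(S_{\min}-2H^2)-1701(1+H^2)^2$. Thus the bound in (3b) is precisely the fraction in the statement of the corollary. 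Theorem \ref{main1}(3b) then gives $S_{\max}-S_{\min}\ge$ that fraction, which contradicts the assumed strict inequality.

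The one point to check is that the fraction is well defined, that is, the denominator $2225x^2-420x-1701$ does not vanish for $x=\tS_{\min}/c\in[\frac53,\frac95]$. Its positive root is below $1$, and at $x=\frac53$ the value is $6181.\overline{1}-700-1701>0$, so it is positive on the whole interval. This is the same positivity already used implicitly in the definition of $G$.

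Nothing new is needed beyond Theorem \ref{main1}(3b). The only real work is the algebraic substitution, and the only care required is that every inequality transfers exactly under the constant shift $S=\tS+2H^2$.
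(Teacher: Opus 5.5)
Your proposal is correct and follows the paper's route: the corollary is stated there as a direct consequence of Theorem~\ref{main1}(3b), obtained by rewriting it through $S=\tS+2H^2$, $c=1+H^2$ and the constancy of $H$, exactly as you do. One harmless arithmetic slip: $2225\cdot\frac{25}{9}=6180.\overline{5}$, not $6181.\overline{1}$, and this does not affect the positivity of the denominator on $[\frac53,\frac95]$.
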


The rest of the paper is organized as follows. In Section 2 we collect local formulas for the traceless second fundamental form $\wt h$. In Section 3 we establish an improved Simons identity and prove the first gap pinching theorem. In Section 4 we derive the second Simons-type integral identity and prove the second gap pinching theorem. In Section 5 we calculate the third Simons-type integral formula and give the rigidity results in the third gap.

\section{Notation and local formulas}

Let $M$ be a $2$-dimensional manifold immersed in a unit sphere $\Sn$. We assume the range of indices as follows:
\begin{equation*}
	\begin{aligned}
		1\le i,j,k,&\cdots\le 2,\\
		3\le\alpha,\beta,\gamma,&\cdots\le N,\\
		1\le A,B,C,&\cdots\le N.
	\end{aligned}
\end{equation*}
Let $\{e_1,\cdots,e_N\}$ be a local orthonormal frame field on $T(\Sn)$ such that, when restricted to $M$, $\{e_1,e_2\}$ are tangent to $M$ and $\{e_3,\ldots,e_N\}$ are normal to $M$. Denote by $(\omega_A)$ and $(\omega_{AB})$ the dual coframe field and the connection form field associated to this frame. Let $$\omega_{i\alpha} = h_{ij}^{\alpha} \omega_j.$$ Then $h_{ij}^{\alpha} = h_{ji}^{\alpha}$, and the second fundamental form is given by $$h=h_{ij}^{\alpha}\, \omega_i\omega_j e_{\alpha}.$$ The mean curvature vector field is defined by $$\HH = \frac{1}{2}h_{ii}^{\alpha} e_{\alpha},$$ with length $H=|\HH|$. Throughout the paper we assume that $M$ has parallel mean curvature vector, namely, $$\nabla^{\perp}\HH=0.$$ Define the traceless second fundamental form by $$\wt h_{ij}^{\alpha}=h_{ij}^{\alpha}-H^{\alpha}\delta_{ij}.$$ Then we have $$\wt h_{11}^{\alpha}=-\wt h_{22}^{\alpha},\quad\wt h_{12}^{\alpha}=\wt h_{21}^{\alpha},\quad\text{and}\quad\wt h_{ii}^{\alpha}=0.$$ We introduce column vectors $\ta=(\ta^{\alpha})$ and $\tb=(\tb^{\alpha})$ in $\R^{N-2}$ by $$\ta^{\alpha}=\wt h_{11}^{\alpha}=-\wt h_{22}^{\alpha},\quad\tb^{\alpha}=\wt h_{12}^{\alpha}=\wt h_{21}^{\alpha}.$$ Set
\begin{equation*}
	\begin{aligned}
		\wt S_{\alpha}&=
			\begin{pmatrix}
				\ta^{\alpha} & \tb^{\alpha}\\
				\tb^{\alpha} & -\ta^{\alpha}
			\end{pmatrix},\\
		\wt A&=(\langle\wt S_{\alpha},\wt S_{\beta}\rangle)=2\ta\ta^t+2\tb\tb^t,\\
		\wt S&=|\wt h|^2=\tr\wt A=2(|\ta|^2+|\tb|^2),\quad\text{and}\quad\\
		\wt\rho^{\perp}&=\sum_{\alpha,\beta}|[\wt S_{\alpha},\wt S_{\beta}]|^2.
	\end{aligned}
\end{equation*}
Since $\nabla^{\perp}\HH=0$, the covariant derivatives of $\wt h$ agree with those of $h$. We define
\begin{align*}
	\wt h_{ijk}^{\alpha}\omega_k&=d\wt h_{ij}^{\alpha}+\wt h_{mj}^{\alpha}\omega_{mi}+\wt h_{im}^{\alpha}\omega_{mj}+\wt h_{ij}^{\beta}\omega_{\beta\alpha},\\
	\wt h_{ijkl}^{\alpha}\omega_l&=d\wt h_{ijk}^{\alpha}+\wt h_{mjk}^{\alpha}\omega_{mi}+\wt h_{imk}^{\alpha}\omega_{mj}+\wt h_{ijm}^{\alpha}\omega_{mk}+\wt h_{ijk}^{\beta}\omega_{\beta\alpha},\\
	\wt h_{ijklm}^\alpha\omega_m&=d\wt h_{ijkl}^\alpha+\wt h_{njkl}^\alpha\omega_{ni}+\wt h_{inkl}^\alpha\omega_{nj}+\wt h_{ijnl}^\alpha\omega_{nk}+\wt h_{ijkn}^\alpha\omega_{nl}+\wt h_{ijkl}^\beta\omega_{\beta\alpha},\\
	\wt h_{ijklmn}^\alpha\omega_n&=d\wt h_{ijklm}^\alpha+\wt h_{pjklm}^\alpha\omega_{pi}+\wt h_{ipklm}^\alpha\omega_{pj}+\wt h_{ijplm}^\alpha\omega_{pk}+\wt h_{ijkpm}^\alpha\omega_{pl}+\wt h_{ijklp}^\alpha\omega_{pm}+\wt h_{ijklm}^\beta\omega_{\beta\alpha}.
\end{align*}
For convenience, write $$\ta_i=(\ta_i^{\alpha})\coloneqq(\wt h_{11i}^{\alpha}),\quad\tb_i=(\tb_i^{\alpha})\coloneqq(\wt h_{12i}^{\alpha}),\quad\ta_{ij}=(\ta_{ij}^\alpha)\coloneqq(\wt h_{11ij}^\alpha),$$ and define
\begin{equation*}
	\begin{aligned}
		\B_1&=\sum_{i,j,k,\alpha}(\wt h_{ijk}^{\alpha})^2=|\nabla\wt h|^2,\\
		\B_2&=\sum_{i,j,k,l,\alpha}(\wt h_{ijkl}^{\alpha})^2=|\nabla^2\wt h|^2,\\
		\B_3&=\sum_{i,j,k,l,m,\alpha}(\wt h_{ijklm}^{\alpha})^2=|\nabla^3\wt h|^2.
	\end{aligned}
\end{equation*}
Since $M$ is $2$-dimensional, its Riemann curvature tensor is
\begin{equation}\label{cur1}
	R_{ijkl}=\frac{1}{2}(2c-\wt S)(\delta_{ik}\delta_{jl}-\delta_{il}\delta_{jk}).
\end{equation}
For the normal curvature, because $h_{ij}^{\alpha}=\wt h_{ij}^{\alpha}+H^{\alpha}\delta_{ij}$ and scalar matrices commute, we have
\begin{equation*}
	R_{\alpha\beta kl}=\wt h_{km}^{\alpha}\wt h_{ml}^{\beta}-\wt h_{km}^{\beta}\wt h_{ml}^{\alpha}.
\end{equation*}
In particular,
\begin{equation}\label{cur2}
	R_{\alpha\beta 12}=-R_{\alpha\beta 21}=2(\ta^{\alpha}\tb^{\beta}-\ta^{\beta}\tb^{\alpha}),
\end{equation}
and its first covariant derivative is
\begin{equation}\label{cur3}
	R_{\alpha\beta 12k}=2(\tb^{\beta}\ta_k^{\alpha}+\ta^{\alpha}\tb_k^{\beta}-\tb^{\alpha}\ta_k^{\beta}-\ta^{\beta}\tb_k^{\alpha}).
\end{equation}
The Codazzi equation and the Ricci's formulas are
\begin{align}
	\wt h^\alpha_{ijk}-\wt h^\alpha_{ikj}&=0,\label{cod}\\
	\wt h^\alpha_{ijkl}-\wt h^\alpha_{ijlk}&=\wt h^\alpha_{pj}R_{pikl}+\wt h^\alpha_{ip}R_{pjkl}+\wt h^\beta_{ij}R_{\beta\alpha kl},\label{ric1}\\
	\wt h_{ijklm}^\alpha-\wt h_{ijkml}^\alpha&=\wt h_{pjk}^\alpha R_{pilm}+\wt h_{ipk}^\alpha R_{pjlm}+\wt h_{ijp}^\alpha R_{pklm}+\wt h_{ijk}^\beta R_{\beta\alpha lm},\label{ric2}\\
	\wt h_{ijklmn}^\alpha-\wt h_{ijklnm}^\alpha&=\wt h_{pjkl}^\alpha R_{pimn}+\wt h_{ipkl}^\alpha R_{pjmn}+\wt h_{ijpl}^\alpha R_{pkmn}+\wt h_{ijkp}^\alpha R_{plmn}+\wt h_{ijkl}^\beta R_{\beta\alpha mn}.\label{ric3}
\end{align}
Hence the Codazzi equation \eqref{cod} gives that 
\begin{equation*}
	\ta_2=\tb_1,\,\tb_2=-\ta_1,\text{ and }\B_1=4(|\ta_1|^2+|\ta_2|^2).
\end{equation*}
The Gauss equation yields that
\begin{equation*}
	2K=2-|h|^2+4H^2=2(1+H^2)-\wt S=2c-\wt S.
\end{equation*}
The Laplacian of $\wt h$ is defined by $$\triangle\wt h_{ij}^{\alpha}=\wt h_{ijkk}^{\alpha}.$$ By Ricci's formulas and $\wt h_{mm}^{\alpha}=0$, we obtain that
\begin{equation*}
	\triangle\wt h_{ij}^{\alpha}=(2c-\wt S)\wt h_{ij}^{\alpha}+\wt h_{mi}^{\beta}R_{\beta\alpha jm}.
\end{equation*}
Consequently, we have
\begin{equation}
	\begin{aligned}
		\triangle\ta^{\alpha}&=(2c-\wt S)\ta^{\alpha}+\tb^{\beta}R_{\beta\alpha 12}=(2c-\wt S-2|\tb|^2)\ta^{\alpha}+2\langle\ta,\tb\rangle\tb^{\alpha},\label{lapa}
	\end{aligned}
\end{equation}
and
\begin{equation}
	\begin{aligned}
		\triangle\tb^{\alpha}&=(2c-\wt S)\tb^{\alpha}+\ta^{\beta}R_{\beta\alpha 21}=(2c-\wt S-2|\ta|^2)\tb^{\alpha}+2\langle\ta,\tb\rangle\ta^{\alpha}.\label{lapb}
	\end{aligned}
\end{equation}

\section{The first gap}
We first establish the following lemmas.
\begin{lemma}[Simons' identity \cite{Simo68}]\label{simons}
	Suppose that $M$ is a closed surface immersed in $\Sn$ with parallel mean curvature vector. Under the foregoing assumptions and notation, we have
	\begin{equation*}
		\frac{1}{2}\triangle\wt S=\B_1+2c\wt S-|\wt A|^2-\wt\rho^{\perp}.
	\end{equation*}
\end{lemma}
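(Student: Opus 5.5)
We compute directly from the definition $\wt S=\sum_{i,j,\alpha}(\wt h_{ij}^{\alpha})^2$:
\begin{equation*}
\frac12\triangle\wt S=\sum(\wt h_{ijk}^{\alpha})^2+\sum\wt h_{ij}^{\alpha}\wt h_{ijkk}^{\alpha}=\B_1+\sum_{i,j,\alpha}\wt h_{ij}^{\alpha}\triangle\wt h_{ij}^{\alpha}.
\end{equation*}
The first step is therefore routine: differentiate twice, sum over $k$, and identify the first-order term with $\B_1=|\nabla\wt h|^2$. This uses only the definition of covariant derivatives of $\wt h$ given above; since $\nabla^\perp\HH=0$, these coincide with those of $h$.

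For the second term we invoke the formula already derived in Section 2 from the Codazzi equation \eqref{cod}, the Ricci formula \eqref{ric1} and $\wt h_{mm}^\alpha=0$, namely
\begin{equation*}
\triangle\wt h_{ij}^{\alpha}=(2c-\wt S)\wt h_{ij}^{\alpha}+\wt h_{mi}^{\beta}R_{\beta\alpha jm}.
\end{equation*}
Contracting with $\wt h_{ij}^\alpha$ gives $(2c-\wt S)\wt S+\sum\wt h_{ij}^\alpha\wt h_{mi}^\beta R_{\beta\alpha jm}$. In the second sum we insert the normal curvature $R_{\beta\alpha jm}=\wt h_{jp}^\beta\wt h_{pm}^\alpha-\wt h_{jp}^\alpha\wt h_{pm}^\beta$. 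Written in matrix form, this sum becomes
\begin{equation*}
\sum_{\alpha,\beta}\tr\bigl(\wt S_\alpha\wt S_\beta\wt S_\beta\wt S_\alpha-\wt S_\alpha\wt S_\beta\wt S_\alpha\wt S_\beta\bigr)=-\frac12\sum_{\alpha,\beta}|[\wt S_\alpha,\wt S_\beta]|^2=-\frac12\wt\rho^\perp .
\end{equation*}
The index bookkeeping and signs here need care, and this contraction is the only nontrivial step.

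The main obstacle is matching the remaining terms with $-|\wt A|^2-\wt\rho^\perp$. We must show that $\wt S^2=|\wt A|^2+\frac12\wt\rho^\perp$ for $2\times2$ traceless symmetric matrices. We plan to verify this with the explicit vectors $\ta,\tb$. One has $\wt A=2\ta\ta^t+2\tb\tb^t$, so
\begin{equation*}
|\wt A|^2=4\bigl(|\ta|^4+|\tb|^4+2\langle\ta,\tb\rangle^2\bigr).
\end{equation*}
Next, $[\wt S_\alpha,\wt S_\beta]=2(\ta^\alpha\tb^\beta-\ta^\beta\tb^\alpha)\begin{pmatrix}0&1\\-1&0\end{pmatrix}$, which gives
\begin{equation*}
\wt\rho^\perp=8\sum_{\alpha,\beta}(\ta^\alpha\tb^\beta-\ta^\beta\tb^\alpha)^2=16\bigl(|\ta|^2|\tb|^2-\langle\ta,\tb\rangle^2\bigr).
\end{equation*}
Finally, $\wt S^2=4(|\ta|^2+|\tb|^2)^2$. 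Combining these, $|\wt A|^2+\frac12\wt\rho^\perp=4(|\ta|^4+|\tb|^4+2|\ta|^2|\tb|^2)=\wt S^2$, as required. Substituting into the previous lines yields
\begin{equation*}
\frac12\triangle\wt S=\B_1+2c\wt S-\wt S^2-\frac12\wt\rho^\perp=\B_1+2c\wt S-|\wt A|^2-\wt\rho^\perp.
\end{equation*}

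As a cross-check, one can instead compute $\langle\ta,\triangle\ta\rangle+\langle\tb,\triangle\tb\rangle$ directly from \eqref{lapa}–\eqref{lapb}. Since $\wt S=2(|\ta|^2+|\tb|^2)$, we have $\sum\wt h_{ij}^\alpha\triangle\wt h_{ij}^\alpha=2\langle\ta,\triangle\ta\rangle+2\langle\tb,\triangle\tb\rangle$. Using \eqref{lapa}–\eqref{lapb}, this equals
\begin{equation*}
(2c-\wt S)\wt S-8\bigl(|\ta|^2|\tb|^2-\langle\ta,\tb\rangle^2\bigr).
\end{equation*}
This agrees with the formula above, because the last term is exactly $-\frac12\wt\rho^\perp$.
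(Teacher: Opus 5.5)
Your proposal is correct, and your cross-check is exactly the paper's proof. The paper writes $\frac12\triangle\wt S=\B_1+2\langle\ta,\triangle\ta\rangle+2\langle\tb,\triangle\tb\rangle$ and substitutes \eqref{lapa}--\eqref{lapb}. It leaves implicit the algebraic step $(2c-\wt S)\wt S-8(|\ta|^2|\tb|^2-\langle\ta,\tb\rangle^2)=2c\wt S-|\wt A|^2-\wt\rho^\perp$, which you make explicit through $\wt S^2=|\wt A|^2+\frac12\wt\rho^\perp$. Your main route, the general tensorial contraction followed by the $2\times2$ identity, is only a cosmetic variant of this.

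There is one sign slip in that main route. With your own formula $R_{\beta\alpha jm}=\wt h_{jp}^\beta\wt h_{pm}^\alpha-\wt h_{jp}^\alpha\wt h_{pm}^\beta$, the contraction $\sum\wt h_{ij}^\alpha\wt h_{mi}^\beta R_{\beta\alpha jm}$ equals $\sum_{\alpha,\beta}\tr(\wt S_\alpha\wt S_\beta\wt S_\alpha\wt S_\beta-\wt S_\alpha\wt S_\beta\wt S_\beta\wt S_\alpha)$. That is, the two trace terms come in the opposite order from what you displayed. For symmetric $A,B$ one has $|[A,B]|^2=-\tr([A,B]^2)=2\tr(ABBA)-2\tr(ABAB)$. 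So the expression you wrote is nonnegative and equals $+\frac12\wt\rho^\perp$, not $-\frac12\wt\rho^\perp$. You can see this with $\wt S_3=\mathrm{diag}(1,-1)$ and $\wt S_4$ the off-diagonal matrix with entries $1$: your expression gives $8$, while the contraction gives $-8$.

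The value $-\frac12\wt\rho^\perp$ that you actually carry forward is the right one, as your cross-check confirms. The two sign errors cancel, so the conclusion stands; just correct the displayed matrix form.
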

\begin{proof}
	A direct calculation gives that
	\begin{equation*}
		\begin{aligned}
			\frac{1}{2}\triangle\wt S
			&=\frac{1}{2}\triangle\left(2|\ta|^2+2|\tb|^2\right)\\
			&=\B_1+2\langle\ta,\triangle\ta\rangle+2\langle\tb,\triangle\tb\rangle,
		\end{aligned}
	\end{equation*}
	Combining with \eqref{lapa} and \eqref{lapb}, we prove the lemma.
\end{proof}

\begin{lemma}\label{ineq}
	Suppose that $M$ is a closed surface immersed in $\Sn$ with parallel mean curvature vector. Under the foregoing assumptions and notation, we have
	\begin{equation*}
		-\wt S^2+|\wt A|^2+\wt\rho^{\perp}\le\frac{1}{2}\wt S^2,
	\end{equation*}
	where the equality holds if and only if $\ta\perp\tb$ and $|\ta|=|\tb|$.
\end{lemma}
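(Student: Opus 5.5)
Everything is expressed through the vectors $\ta,\tb\in\R^{N-2}$. Set $x=|\ta|^2$, $y=|\tb|^2$, $z=\langle\ta,\tb\rangle$, so that $\wt S=2(x+y)$ and $\wt A=2\ta\ta^t+2\tb\tb^t$. The inequality will then reduce to an elementary statement about the $2\times2$ Gram matrix of $\ta,\tb$.

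First compute $|\wt A|^2=\tr(\wt A^2)$. Since $\wt A=2(\ta\ta^t+\tb\tb^t)$,
\[
|\wt A|^2=4\left(|\ta|^4+|\tb|^4+2\langle\ta,\tb\rangle^2\right)=4(x^2+y^2+2z^2).
\]

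Next compute $\wt\rho^{\perp}$. Using $\wt S_\alpha=\ta^\alpha E_1+\tb^\alpha E_2$ with $E_1=\mathrm{diag}(1,-1)$ and $E_2$ the off-diagonal symmetric matrix, we have $[E_1,E_2]=2J$ with $|J|^2=2$. Hence $[\wt S_\alpha,\wt S_\beta]=2(\ta^\alpha\tb^\beta-\ta^\beta\tb^\alpha)J$, consistent with \eqref{cur2}, and
\[
|[\wt S_\alpha,\wt S_\beta]|^2=8(\ta^\alpha\tb^\beta-\ta^\beta\tb^\alpha)^2.
\]
Summing over $\alpha,\beta$ with the Lagrange identity $\sum_{\alpha,\beta}(\ta^\alpha\tb^\beta-\ta^\beta\tb^\alpha)^2=2(xy-z^2)$ gives $\wt\rho^{\perp}=16(xy-z^2)$.

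Substituting, the left-hand side becomes
\[
-4(x+y)^2+4(x^2+y^2+2z^2)+16(xy-z^2)=8xy-8z^2,
\]
while $\tfrac12\wt S^2=2(x+y)^2$. The claim is therefore equivalent to
\[
2(x+y)^2-8xy+8z^2=2(x-y)^2+8z^2\ge0,
\]
which is obvious. Equality holds iff $x=y$ and $z=0$, that is, $|\ta|=|\tb|$ and $\ta\perp\tb$.

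The only delicate point is fixing the normalization constants, in particular the factor in $\wt\rho^{\perp}$ (the sum runs over ordered pairs $(\alpha,\beta)$, and $|J|^2=2$). I will check this against a known case: when $\ta\perp\tb$ and $|\ta|=|\tb|$, the combination should give exactly $\frac12\wt S^2$, matching the Calabi-type case where the minimal-surface analogue is sharp.
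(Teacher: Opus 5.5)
Your proposal is correct and follows the paper's proof essentially verbatim. You compute $|\wt A|^2=4(x^2+y^2+2z^2)$ and $\wt\rho^{\perp}=16(xy-z^2)$, reduce the left-hand side to $8(xy-z^2)$, and conclude from $\frac12\wt S^2-8(xy-z^2)=2(x-y)^2+8z^2\ge0$, exactly as the paper does. Your derivation of the factor $16$ via $[E_1,E_2]=2J$ and the Lagrange identity, which the paper leaves as a ``direct computation,'' is also correct, and your equality analysis does establish the full ``if and only if.''
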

\begin{proof}
	A direct computation gives that
	\begin{equation*}
		|\wt A|^2=4|\ta|^4+4|\tb|^4+8\langle\ta,\tb\rangle^2,
	\end{equation*}
	and
	\begin{equation*}
		\wt\rho^{\perp}=16\left(|\ta|^2|\tb|^2-\langle\ta,\tb\rangle^2\right).
	\end{equation*}
	Hence
	\begin{equation*}
		-\wt S^2+|\wt A|^2+\wt\rho^{\perp}=8\left(|\ta|^2|\tb|^2-\langle\ta,\tb\rangle^2\right),
	\end{equation*}
	which yields that
	\begin{equation*}
		\frac{1}{2}\wt S^2-\left(-\wt S^2+|\wt A|^2+\wt\rho^{\perp}\right)=2\left(|\ta|^2-|\tb|^2\right)^2+8\langle\ta,\tb\rangle^2\ge 0.
	\end{equation*}
	Therefore the desired inequality follows, and equality holds when $\ta\perp\tb$ and $|\ta|=|\tb|$.
\end{proof}

We now establish an improved version of the Simons identity for parallel mean curvature surfaces. This is the key step in the proof of the first and second gaps.

\begin{theorem}\label{1stlap}
	Suppose that $M$ is a closed surface immersed in $\Sn$ with parallel mean curvature vector and positive Gaussian curvature. Then
	\begin{equation*}
		\frac{1}{2}\triangle\wt S=\B_1-\frac{1}{2}\wt S(3\wt S-4c).
	\end{equation*}
\end{theorem}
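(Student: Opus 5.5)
The plan is to compare the claimed formula with Lemma \ref{simons}. Subtracting, the statement is equivalent to the pointwise identity
\[
|\wt A|^2+\wt\rho^{\perp}=\tfrac32\wt S^2,
\]
that is, to the equality case of Lemma \ref{ineq}. By the computation in the proof of Lemma \ref{ineq}, this holds if and only if
\[
|\ta|^2-|\tb|^2\equiv 0\quad\text{and}\quad\langle\ta,\tb\rangle\equiv 0 \text{ on } M.
\]
So the whole task is to show that the quartic quantity
\[
F\coloneqq|\ta|^2-|\tb|^2-2i\langle\ta,\tb\rangle
\]
vanishes identically when $K>0$. Since the identity is local, we may first pass to the orientable double cover. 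We then use local isothermal coordinates $z=x+iy$ with $g=\lambda^2|dz|^2$, and take $e_1,e_2$ along $\partial_x,\partial_y$.

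\emph{Step 1: a holomorphic quartic differential.} Set
\[
\phi\coloneqq\wt h(\partial_z,\partial_z)=\tfrac{\lambda^2}{4}(\ta-i\tb),
\]
a section of the complexified normal bundle. By Codazzi \eqref{cod}, $(\nabla_{\partial_{\bar z}}\wt h)(\partial_z,\partial_z)=(\nabla_{\partial_z}\wt h)(\partial_{\bar z},\partial_z)$. The right-hand side is the derivative of $\wt h(\partial_z,\partial_{\bar z})$, which is $0$ because $\wt h$ is traceless and $g(\partial_z,\partial_{\bar z})$ is parallel. (Equivalently, in the frame notation this is exactly the relations $\ta_2=\tb_1$, $\tb_2=-\ta_1$, and it uses $\nabla^\perp\HH=0$, which makes $\nabla\wt h=\nabla h$.) Hence $\nabla^\perp_{\partial_{\bar z}}\phi=0$. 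The normal connection is metric, so
\[
\partial_{\bar z}\langle\phi,\phi\rangle=2\langle\nabla^\perp_{\partial_{\bar z}}\phi,\phi\rangle=0,
\]
with $\langle\cdot,\cdot\rangle$ extended complex-bilinearly. Thus $\Phi=\langle\phi,\phi\rangle\,dz^4$ is a globally defined holomorphic quartic differential on $M$, and
\[
\langle\phi,\phi\rangle=\tfrac{\lambda^4}{16}F.
\]

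\emph{Step 2: topology forces $\Phi\equiv0$.} Since $K>0$ and $M$ is closed and orientable, Gauss--Bonnet gives $\chi(M)>0$, so $M$ is a topological $2$-sphere and, with the conformal structure induced by $g$, the Riemann sphere. On $\mathbb{CP}^1$ every holomorphic quartic differential vanishes: its degree $-8<0$ rules out nonzero holomorphic sections of $K_{\mathbb{CP}^1}^{4}$. Concretely, in the chart $w=1/z$ one has $dz^4=w^{-8}dw^4$, so an entire function $f$ with $f(1/w)w^{-8}$ bounded near $w=0$ must vanish by Liouville. Hence $F\equiv0$, i.e. $|\ta|=|\tb|$ and $\ta\perp\tb$ everywhere.

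\emph{Step 3: conclude.} By the equality case of Lemma \ref{ineq}, $-\wt S^2+|\wt A|^2+\wt\rho^\perp=\frac12\wt S^2$. Substituting this into Lemma \ref{simons} gives
\[
\tfrac12\triangle\wt S=\B_1+2c\wt S-\tfrac32\wt S^2=\B_1-\tfrac12\wt S(3\wt S-4c).
\]

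The only genuinely nontrivial step is Step 1, the holomorphicity of $\langle\phi,\phi\rangle$. I would verify it directly in frame form by checking the Cauchy--Riemann equations for $\lambda^4F$ from $\ta_2=\tb_1$, $\tb_2=-\ta_1$, the connection forms in isothermal coordinates, and the skew-symmetry of $\omega_{\alpha\beta}$. The latter makes the normal-connection terms cancel in $\langle\ta,\ta\rangle-\langle\tb,\tb\rangle$ and in $\langle\ta,\tb\rangle$.
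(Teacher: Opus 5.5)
Your proof is correct, but it takes a different route from the paper. The paper works entirely with the Ricci identities. It computes $(\wt h^\alpha_{ijk}\tri\wt h^\alpha_{ij})_k$ term by term and finds that it equals $\frac12(2c-\tS)\tS^2-(2c-\tS)D$, plus exact terms, plus the squares $8(\langle\ta,\ta_2\rangle+\langle\tb,\ta_1\rangle)^2+8(\langle\ta,\ta_1\rangle-\langle\tb,\ta_2\rangle)^2$, where $D=-\tS^2+|\wt A|^2+\wt\rho^\perp$. Integrating gives $\int_M(2c-\tS)D\ge\frac12\int_M(2c-\tS)\tS^2$. Since $2c-\tS=2K>0$ and $D\le\frac12\tS^2$ pointwise (Lemma~\ref{ineq}), equality is forced everywhere.

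You replace this computation by the holomorphic quartic differential $\langle\phi,\phi\rangle\,dz^4$ together with genus zero. Your argument is much shorter and uses $K>0$ only through Gauss--Bonnet. It therefore gives the identity (and Corollary~\ref{coro1gap}) for any PMC immersion of $S^2$ or $\mathbb{RP}^2$. This is exactly the weakening the authors point out in the remark after Corollary~\ref{coro1gap}, and the same device they use for the sextic differential in Lemma~\ref{eq:pmc-s3-a1-a2-relations}.

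The paper's argument needs no uniformization, uses $K>0$ as a pointwise weight, and fits the Simons-identity method that the paper iterates for the second and third gaps. Its divergence formula, reused in Lemma~\ref{eq:pmc-s3-half-delta-B1}, is also available to you. Indeed, $\tri\wt h^\alpha_{ij}=\frac12(4c-3\tS)\wt h^\alpha_{ij}$ gives $\wt h^\alpha_{ijk}\tri\wt h^\alpha_{ij}=\frac14(4c-3\tS)\tS_k$, whose divergence is $c\tri\tS-\frac38\tri(\tS^2)$.

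Two minor points. First, $\partial_z=\frac\lambda2(e_1-ie_2)$ gives $\phi=\frac{\lambda^2}{2}(\ta-i\tb)$ and $\langle\phi,\phi\rangle=\frac{\lambda^4}{4}F$, not $\frac{\lambda^4}{16}F$; this is harmless. Second, in Step 1 the facts you actually use are not parallelism of $g(\partial_z,\partial_{\bar z})=\lambda^2/2$. They are that $\nabla_{\partial_z}\wt h$ is again traceless, and that $\nabla_{\partial_z}\partial_{\bar z}=\nabla_{\partial_{\bar z}}\partial_z=0$ in isothermal coordinates. The latter is also what gives $\nabla^\perp_{\partial_{\bar z}}\phi=(\nabla_{\partial_{\bar z}}\wt h)(\partial_z,\partial_z)$.
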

\begin{proof}
	Let $$D=-\wt S^2+|\wt A|^2+\rho^{\perp}.$$ By Ricci's formula \eqref{ric1} we have
	\begin{equation}\label{thm1eq}
		\begin{aligned}
			(\wt h_{ijk}^{\alpha}\tri\wt h_{ij}^{\alpha})_k&=\sum_{i,j,\alpha}(\tri\wt h_{ij}^{\alpha})^2+\wt h_{ijk}^{\alpha}\wt h_{pik}^{\alpha}R_{pljl}+\wt h_{ijk}^{\alpha}\wt h_{lpk}^{\alpha}R_{pijl} \\
			&\qquad+\wt h_{ijk}^{\alpha}\wt h_{pi}^{\alpha}R_{pljlk}+\wt h_{ijk}^{\alpha}\wt h_{lp}^{\alpha}R_{pijlk}+\wt h_{ijk}^{\alpha}\wt h_{lik}^{\beta}R_{\beta\alpha jl}+\wt h_{ijk}^{\alpha}\wt h_{li}^{\beta}R_{\beta\alpha jlk}.
		\end{aligned}
	\end{equation}
	To calculate the first term of \eqref{thm1eq}, we set $x\coloneqq|\ta|^2,y\coloneqq|\tb|^2,q\coloneqq\langle\ta,\tb\rangle,m\coloneqq2c-\wt S.$ Then by \eqref{lapa} and \eqref{lapb} we have $$\tri\ta=(m-2y)\ta+2q\tb,\text{ and }\tri\tb=(m-2x)\tb+2q\ta.$$ Hence
	\begin{equation*}
		\begin{aligned}
			|\tri\ta|^2&=\langle(m-2y)\ta+2q\tb,(m-2y)\ta+2q\tb\rangle\\
			&=(m-2y)^2|\ta|^2+4q^2|\tb|^2+4q(m-2y)\langle\ta,\tb\rangle\\
			&=x(m-2y)^2+4q^2y+4q^2(m-2y)\\
			&=x(m-2y)^2+4(m-y)q^2,
		\end{aligned}
	\end{equation*}
	and similarly,
	\begin{equation*}
		\begin{aligned}
			|\tri\tb|^2&=\langle(m-2x)\tb+2q\ta,(m-2x)\tb+2q\ta\rangle\\
			&=(m-2x)^2|\tb|^2+4q^2|\ta|^2+4q(m-2x)\langle\ta,\tb\rangle\\
			&=y(m-2x)^2+4q^2x+4q^2(m-2x) \\
			&=y(m-2x)^2+4(m-x)q^2.
		\end{aligned}
	\end{equation*}
	Therefore we obtain
	\begin{equation*}
		\begin{aligned}
			2\left(|\tri\ta|^2+|\tri\tb|^2\right)&=2(x+y)m^2-16mxy+8xy(x+y)+8(2m-x-y)q^2\\
			&=\wt S(2c-\wt S)^2+8\left[-2mxy+(x+y)xy+2mq^2-(x+y)q^2\right]\\
			&=\wt S(2c-\wt S)^2+8\left(2m-(x+y)\right)(q^2-xy)\\
			&=\wt S(2c-\wt S)^2+8\left(5(x+y)-4c\right)(xy-q^2).
		\end{aligned}
	\end{equation*}
	Now, using $\wt S=2(x+y)$ and $D=8(xy-q^2)$, we get $10(x+y)-8c=5\wt S-8c$, and hence
	\begin{equation*}
		\begin{aligned}
			\sum_{i,j,\alpha}(\tri\wt h^\alpha_{ij})^2&=2\sum_\alpha\left((\tri a^\alpha)^2+(\tri b^\alpha)^2\right)\\
			&=\wt S(2c-\wt S)^2+\frac{1}{2}(5\wt S-8c)D.
		\end{aligned}
	\end{equation*}
	To calculate the second, third and sixth terms of \eqref{thm1eq}, set $$I_2\coloneqq\wt h_{ijk}^{\alpha}\wt h_{pik}^{\alpha}R_{pljl},\;I_3\coloneqq\wt h_{ijk}^{\alpha}\wt h_{lpk}^{\alpha}R_{pijl},\text{ and }I_6\coloneqq\wt h_{ijk}^{\alpha}\wt h_{lik}^{\beta}R_{\beta\alpha jl}.$$ By \eqref{cur1} we have $$R_{pljl}=\frac{1}{2}(2c-\wt S)(\delta_{pj}\delta_{ll}-\delta_{pl}\delta_{jl})=\frac{1}{2}(2c-\wt S)\delta_{pj},$$ hence $$I_2=\frac{1}{2}(2c-\wt S)\wt h_{ijk}^{\alpha}\wt h_{jik}^{\alpha}=\frac{1}{2}(2c-\wt S)\B_1.$$ Similarly,
	\begin{equation*}
		\begin{aligned}
			I_3&=\frac{1}{2}(2c-\wt S)\wt h_{ijk}^{\alpha}\wt h_{lpk}^{\alpha}(\delta_{pj}\delta_{il}-\delta_{pl}\delta_{ij}) \\
			&=\frac{1}{2}(2c-\wt S)\left(\wt h_{ijk}^{\alpha}\wt h_{ijk}^{\alpha}-\wt h_{ijk}^{\alpha}\wt h_{ppk}^{\alpha}\delta_{ij}\right)\\
			&=\frac{1}{2}(2c-\wt S)\B_1.
		\end{aligned}
	\end{equation*}
	By \eqref{cur2} and \eqref{cod} we get
	\begin{equation*}
		\begin{aligned}
			I_6&=\wt h_{i1k}^{\alpha}\wt h_{i2k}^{\beta}R_{\beta\alpha12}+\wt h_{i2k}^{\alpha}\wt h_{i1k}^{\beta}R_{\beta\alpha21} \\
			&=2\wt h_{i1k}^{\alpha}\wt h_{i2k}^{\beta}R_{\beta\alpha12}\\
			&=2(\wt h_{111}^{\alpha}\wt h_{121}^{\beta}+\wt h_{112}^{\alpha}\wt h_{122}^{\beta}+\wt h_{211}^{\alpha}\wt h_{221}^{\beta}+\wt h_{212}^{\alpha}\wt h_{222}^{\beta})R_{\beta\alpha12}\\
			&=8(\ta_1^\alpha\ta_2^\beta-\ta_2^\alpha\ta_1^\beta)(\ta^\beta\tb^\alpha-\ta^\alpha\tb^\beta) \\
			&=16\langle\ta,\ta_2\rangle\langle\tb,\ta_1\rangle-16\langle\ta,\ta_1\rangle\langle\tb,\ta_2\rangle
		\end{aligned}
	\end{equation*}
	Hence we conclude that
	\begin{equation*}
		I_2+I_3+I_6=(2c-\wt S)\B_1+16\langle\ta,\ta_2\rangle\langle\tb,\ta_1\rangle-16\langle\ta,\ta_1\rangle\langle\tb,\ta_2\rangle.
	\end{equation*}
	To calculate the fourth and fifth terms of \eqref{thm1eq}, set $$I_4\coloneqq\wt h_{ijk}^{\alpha}\wt h_{pi}^{\alpha}R_{pljlk}\text{ and }I_5\coloneqq\wt h_{ijk}^{\alpha}\wt h_{lp}^{\alpha}R_{pijlk}.$$ By \eqref{cur1} we have $$I_4=\wt h_{ijk}^{\alpha}\wt h_{pi}^{\alpha}R_{pljlk}=-\frac{1}{2}\wt h_{ijk}^{\alpha}\wt h_{pi}^{\alpha}\delta_{pj}\wt S_k=-\frac{1}{2}\wt h_{ijk}^{\alpha}\wt h_{ij}^{\alpha}\wt S_k.$$ Similarly,
	\begin{equation*}
		\begin{aligned}
			I_5&=\wt h_{ijk}^{\alpha}\wt h_{lp}^{\alpha}R_{pijlk}\\
			&=-\frac{1}{2}\wt h_{ijk}^{\alpha}\wt h_{lp}^{\alpha}(\delta_{pj}\delta_{il}-\delta_{pl}\delta_{ij})\wt S_k\\
			&=-\frac{1}{2}\wt h_{ijk}^{\alpha}\left(\wt h_{ij}^{\alpha}-\wt h_{pp}^{\alpha}\delta_{ij}\right)\wt S_k\\
			&=-\frac{1}{2}\wt h_{ijk}^{\alpha}\wt h_{ij}^{\alpha}\wt S_k.
		\end{aligned}
	\end{equation*}
	Hence we obtain $$I_4+I_5=-\wt h_{ijk}^{\alpha}\wt h_{ij}^{\alpha}\wt S_k=-\frac{1}{2}|\nabla\wt S|^2.$$ For the last term of \eqref{thm1eq}, set $$I_7\coloneqq\wt h_{ijk}^{\alpha}\wt h_{li}^{\beta}R_{\beta\alpha jlk}.$$ Then by \eqref{cur3} and \eqref{cod} we have
	\begin{equation*}
		\begin{aligned}
			I_7&=\wt h_{ijk}^{\alpha}\wt h_{li}^{\beta}R_{\beta\alpha jlk}\\
			&=\wt h_{i1k}^{\alpha}\wt h_{2i}^{\beta}R_{\beta\alpha12k}+\wt h_{i2k}^{\alpha}\wt h_{1i}^{\beta}R_{\beta\alpha21k} \\
			&=\left(\wt h_{i1k}^{\alpha}\wt h_{2i}^{\beta}-\wt h_{i2k}^{\alpha}\wt h_{1i}^{\beta}\right)R_{\beta\alpha12k}\\
			&=2(b^\beta a_1^\alpha-a^\beta a_2^\alpha)R_{\beta\alpha121}+2(a^\beta a_1^\alpha+b^\beta a_2^\alpha)R_{\beta\alpha122}\\
			&=4\left(\langle\tb,\ta_1\rangle^2+\langle\ta,\ta_2\rangle^2-2\langle\ta,\ta_1\rangle\langle\tb,\ta_2\rangle-|\ta|^2|\ta_2|^2-|\tb|^2|\ta_1|^2\right)\\
			&\qquad+4\left(\langle\ta,\ta_1\rangle^2+\langle\tb,\ta_2\rangle^2+2\langle\ta,\ta_2\rangle\langle\tb,\ta_1\rangle-|\ta|^2|\ta_1|^2-|\tb|^2|\ta_2|^2\right)\\
			&=4\left(\langle\ta,\ta_1\rangle^2+\langle\tb,\ta_1\rangle^2+\langle\ta,\ta_2\rangle^2+\langle\tb,\ta_2\rangle^2\right) \\
			&\qquad+8\langle\ta,\ta_2\rangle\langle\tb,\ta_1\rangle-8\langle\ta,\ta_1\rangle\langle\tb,\ta_2\rangle -4(|\ta|^2+|\tb|^2)(|\ta_1|^2+|\ta_2|^2).
		\end{aligned}
	\end{equation*}
	By \eqref{cod}, we have
	\begin{equation*}
		\wt S_1=4\left(\langle\ta,\ta_1\rangle+\langle\tb,\ta_2\rangle\right)\text{ and }\wt S_2=4\left(\langle\ta,\ta_2\rangle-\langle\tb,\ta_1\rangle\right).
	\end{equation*}
	Hence $$-4(|\ta|^2+|\tb|^2)(|\ta_1|^2+|\ta_2|^2)=-\frac{1}{2}\wt S\B_1,$$ and $$\frac14|\nabla\wt S|^2=4\left(\langle\ta,\ta_1\rangle^2+\langle\tb,\ta_1\rangle^2+\langle\ta,\ta_2\rangle^2+\langle\tb,\ta_2\rangle^2\right)+8\langle\ta,\ta_1\rangle\langle\tb,\ta_2\rangle-8\langle\ta,\ta_2\rangle\langle\tb,\ta_1\rangle.$$ Then we have
	\begin{equation*}
		I_7=-\frac{1}{2}\wt S\B_1+\frac{1}{4}|\nabla\wt S|^2+16\langle\ta,\ta_2\rangle\langle\tb,\ta_1\rangle-16\langle\ta,\ta_1\rangle\langle\tb,\ta_2\rangle.
	\end{equation*}
	Therefore,
	\begin{equation*}
		\begin{aligned}
			(\wt h_{ijk}^{\alpha}\tri\wt h_{ij}^{\alpha})_k&=\frac{1}{2}(4c-3\wt S)\B_1+\wt S(2c-\wt S)^2+\frac{1}{2}(5\wt S-8c)D-\frac{1}{4}|\nabla\wt S|^2\\
			&\qquad+32\langle\ta,\ta_2\rangle\langle\tb,\ta_1\rangle-32\langle\ta,\ta_1\rangle\langle\tb,\ta_2\rangle.
		\end{aligned}
	\end{equation*}
	On the other hand, a direct calculation yields that
	\begin{equation*}
		8\left(\langle\ta,\ta_2\rangle+\langle\tb,\ta_1\rangle\right)^2+8\left(\langle\ta,\ta_1\rangle-\langle\tb,\ta_2\rangle\right)^2=\frac{1}{2}|\nabla\wt S|^2+32\langle\ta,\ta_2\rangle\langle\tb,\ta_1\rangle-32\langle\ta,\ta_1\rangle\langle\tb,\ta_2\rangle.
	\end{equation*}
	Combining Lemma \ref{simons} and the relation $2\wt S\tri\wt S=\tri(\wt S^2)-2|\nabla\wt S|^2$, we obtain that
	\begin{equation*}
		\begin{aligned}
			(\wt h_{ijk}^{\alpha}\tri\wt h_{ij}^{\alpha})_k&=\frac{1}{2}(2c-\wt S)\wt S^2+(2c-\wt S)(\wt S^2-|\wt A|^2-\rho^{\perp})+c\tri\wt S-\frac{3}{8}\tri(\wt S^2)\\
			&\qquad+8\left(\langle\ta,\ta_2\rangle+\langle\tb,\ta_1\rangle\right)^2+8\left(\langle\ta,\ta_1\rangle-\langle\tb,\ta_2\rangle\right)^2.
		\end{aligned}
	\end{equation*}
	Integrating over $M$, we obtain
	\begin{equation*}
		\int_M(2c-\wt S)D\ge\frac{1}{2}\int_M(2c-\wt S)\wt S^2.
	\end{equation*}
	Since the Gaussian curvature is positive, $2c-\wt S=2K>0$. By Lemma \ref{ineq}, we obtain
	\begin{equation*}
		D\le\frac{1}{2}\wt S^2.
	\end{equation*}
	Hence
	\begin{equation*}
		\int_M(2c-\wt S)D\le\frac{1}{2}\int_M(2c-\wt S)\wt S^2.
	\end{equation*}
	It follows that equality holds everywhere. By Lemma \ref{ineq} we conclude that $\langle\ta,\tb\rangle=0$ and $|\ta|=|\tb|$. Equivalently,
	\begin{equation*}
		D = \frac{1}{2}\wt S^2.
	\end{equation*}
	Substituting this into Lemma \ref{simons} yields $$\frac{1}{2}\tri\wt S=\B_1+2c\wt S-\frac{3}{2}\wt S^2=\B_1-\frac{1}{2}\wt S(3\wt S-4c),$$ which proves the theorem.
\end{proof}

\begin{corollary}\label{coro1gap}
	Suppose that $M$ is a closed surface immersed in $\Sn$ with parallel mean curvature vector and positive Gaussian curvature. Under the foregoing assumptions and notations, we have
	\begin{enumerate}
		\item\label{abrel} the relations of $\ta$ and $\tb$: $$\langle\ta,\tb\rangle=0 \text{ and } |\ta|^2=|\tb|^2=\frac{1}{4}\wt S;$$
		\item\label{lapab} the Laplacians of $\ta$ and $\tb$: $$\tri\ta=\frac{1}{2}(4c-3\wt S)\ta \text{ and } \tri\tb=\frac{1}{2}(4c-3\wt S)\tb;$$
		\item\label{sarho} the relations of $\wt S$, $|\wt A|^2$ and $\rho^\perp$: $$|\wt A|^2=\frac{1}{2}\wt S^2 \text{ and } \wt\rho^{\perp}=\wt S^2.$$
	\end{enumerate}
\end{corollary}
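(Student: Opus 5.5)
The corollary is essentially a bookkeeping consequence of the equality case extracted in the proof of Theorem \ref{1stlap}. That argument shows, under positive Gaussian curvature, that $D=\frac12\wt S^2$ holds at every point of $M$. By Lemma \ref{ineq}, this is equivalent to $2(|\ta|^2-|\tb|^2)^2+8\langle\ta,\tb\rangle^2=0$ pointwise. My plan is to record these pointwise identities first and then substitute them into the formulas from Section 2.

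For \eqref{abrel}, the vanishing of the sum of squares gives $\langle\ta,\tb\rangle=0$ and $|\ta|=|\tb|$ at every point. Combining this with $\wt S=2(|\ta|^2+|\tb|^2)$ yields $|\ta|^2=|\tb|^2=\frac14\wt S$.

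For \eqref{lapab}, I substitute $q=\langle\ta,\tb\rangle=0$ and $|\tb|^2=\frac14\wt S$ into \eqref{lapa}. This gives $\tri\ta=(2c-\wt S-\frac12\wt S)\ta=\frac12(4c-3\wt S)\ta$. The same substitution in \eqref{lapb}, using $|\ta|^2=\frac14\wt S$, gives the formula for $\tri\tb$.

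For \eqref{sarho}, I use the explicit expressions computed in the proof of Lemma \ref{ineq}:
\[
|\wt A|^2=4|\ta|^4+4|\tb|^4+8\langle\ta,\tb\rangle^2
\quad\text{and}\quad
\wt\rho^\perp=16\bigl(|\ta|^2|\tb|^2-\langle\ta,\tb\rangle^2\bigr).
\]
Plugging in part \eqref{abrel} gives $|\wt A|^2=8\cdot\frac{\wt S^2}{16}=\frac12\wt S^2$ and $\wt\rho^\perp=16\cdot\frac{\wt S^2}{16}=\wt S^2$. As a consistency check, $D=-\wt S^2+\frac12\wt S^2+\wt S^2=\frac12\wt S^2$.

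There is no genuine obstacle here. The only point needing care is that the equality $D=\frac12\wt S^2$ holds pointwise, not merely in the integrated sense. This follows because the integrand $(2c-\wt S)\bigl(\frac12\wt S^2-D\bigr)$ is continuous and nonnegative with zero integral, and the factor $2c-\wt S=2K$ is strictly positive, so the second factor must vanish everywhere. I would state this explicitly before carrying out the substitutions.
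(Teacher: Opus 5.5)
Your proposal is correct and matches the paper's proof essentially step for step. Like the paper, you read off $\langle\ta,\tb\rangle=0$ and $|\ta|=|\tb|$ from the equality case in the proof of Theorem~\ref{1stlap} (via Lemma~\ref{ineq}), substitute these into \eqref{lapa} and \eqref{lapb}, and plug them into the explicit formulas for $|\wt A|^2$ and $\wt\rho^\perp$. Your remark that the continuous, nonnegative integrand $(2c-\wt S)\bigl(\frac12\wt S^2-D\bigr)$, with $2c-\wt S=2K>0$, forces pointwise equality just spells out what the paper compresses into ``equality holds everywhere.''
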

\begin{proof}
	From the proof of Theorem \ref{1stlap}, we obtain that $\langle a,b\rangle = 0$ and $|a|^2 = |b|^2 = \frac{1}{4}\wt S$. Substituting the relations into \eqref{lapa} and \eqref{lapb}, we obtain that $$\tri\ta=(2c-\wt S-2|\tb|^2)\ta=\frac{1}{2}(4c-3\wt S)\ta,$$ and similarly, $$\tri\tb=\frac{1}{2}(4c-3\wt S)\tb.$$ Finally,
	\begin{equation*}
		|\wt A|^2=4|\ta|^4+4|\tb|^4+8\langle\ta,\tb\rangle^2=8|\ta|^4=\frac{1}{2}\wt S^2,
	\end{equation*}
	and
	\begin{equation*}
		\wt\rho^{\perp}=16\left(|\ta|^2|\tb|^2-\langle\ta,\tb\rangle^2\right)=16|\ta|^4=\wt S^2,
	\end{equation*}
	which complete the proof.
\end{proof}

\begin{remark}
	Corollary \ref{coro1gap} \eqref{abrel} can also be proved by using the method of holomorphic differentials; see, for example, \cite{Guad83}. Moreover, the assumption of positive Gaussian curvature can be weakened to the assumption that $M$ is topologically a two-sphere. The proof follows from the same holomorphic differential argument as in our previous paper \cite{Ding25}.
\end{remark}

\begin{theorem}\label{1stint}
	Let $M$ be a closed surface immersed in $\Sn$ with parallel mean curvature vector and positive Gaussian curvature. Then $$\int_M\wt S(3\wt S-4c)=2\int_M\B_1\ge 0.$$ In particular, if $0\le\wt S\le\frac{4}{3}c$, then $\wt S\equiv 0$ or $\wt S\equiv\frac{4}{3}c$.
\end{theorem}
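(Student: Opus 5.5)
The plan is to integrate the improved Simons identity of Theorem \ref{1stlap} over the closed surface $M$. Since $M$ is closed, the divergence theorem gives $\int_M \tri\wt S = 0$. This turns
\begin{equation*}
\frac12\tri\wt S=\B_1-\frac12\wt S(3\wt S-4c)
\end{equation*}
into $\int_M \wt S(3\wt S-4c)=2\int_M \B_1$. Because $\B_1=|\nabla\wt h|^2\ge 0$ pointwise, the right-hand side is nonnegative, and this gives the integral formula.

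For the pinching statement, I would assume $0\le \wt S\le \frac43 c$. Then $\wt S(3\wt S-4c)\le 0$ pointwise, while its integral is $\ge 0$. So
\begin{equation*}
\wt S(3\wt S-4c)\equiv 0\quad\text{and}\quad \B_1\equiv 0 \text{ on } M.
\end{equation*}
Hence at each point $\wt S\in\{0,\frac43c\}$. Since $\wt S$ is continuous and $M$ is connected, $\wt S$ is identically one of these constants.

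I also need to check that the positive curvature hypothesis is satisfied, since Theorem \ref{1stlap} requires it. In the range $0\le \wt S\le \frac43 c$ we have $2K=2c-\wt S\ge \frac23 c>0$, so this is automatic here. The argument is otherwise routine; the only genuine input is Theorem \ref{1stlap}, whose proof already carries the work of forcing $\langle\ta,\tb\rangle=0$ and $|\ta|=|\tb|$. The alternative would be to work with the weaker Lemma \ref{simons} together with the pointwise bound of Lemma \ref{ineq}, but that route is not needed.
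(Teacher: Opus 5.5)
Your proposal is correct and follows the paper's proof essentially verbatim. You integrate Theorem \ref{1stlap}, use $\int_M\tri\wt S=0$ and $\B_1\ge 0$, and then compare the sign of the integral with the pointwise sign of $\wt S(3\wt S-4c)$. Your extra remarks are also correct and make explicit what the paper leaves implicit: $K>0$ is automatic when $\wt S\le\frac43 c$, and continuity plus connectedness of $M$ force $\wt S$ to be a single constant.
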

\begin{proof}
	Integrating Theorem \ref{1stlap} over $M$ and using $\B_1\ge0$, we obtain $$\int_M\wt S(3\wt S-4c)=2\int_M\B_1\ge0.$$ If $0\le\wt S\le\frac{4}{3}c$, then $$\wt S(3\wt S-4c)\le0.$$ Combining these two inequalities, we get $$\wt S(3\wt S-4c)=0,$$ and therefore $\wt S\equiv 0$ or $\wt S\equiv\frac{4}{3}c$.
\end{proof}

\section{The second gap}
In this section we establish the second Simons-type integral identity and prove the corresponding pinching theorem.

\begin{theorem}\label{2ndint}
	Let $M$ be a closed surface immersed in $\Sn$ with parallel mean curvature vector and positive Gaussian curvature. Then $$\int_M\wt S(3\wt S-4c)(3\wt S-5c)=2\int_M\left[\B_2-\frac{1}{4}\wt S(3\wt S-4c)^2+\frac{1}{2}|\nabla\wt S|^2\right]\ge0.$$ In particular, if $\frac{4}{3}c\le\wt S\le\frac{5}{3}c$, then $\wt S\equiv\frac{4}{3}c$ or $\wt S\equiv\frac{5}{3}c$.
\end{theorem}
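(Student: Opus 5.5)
The plan is to compute $\tfrac12\triangle\B_1$, a Simons-type identity one order higher, and integrate it. Theorem \ref{1stlap} gives $\B_1=\frac12\triangle\wt S+\frac12\wt S(3\wt S-4c)$, so $\int_M\B_1$ is already expressed through $\wt S$. Next I would compute
\[
\tfrac12\triangle\B_1=\B_2+\wt h^\alpha_{ijk}\,\triangle\wt h^\alpha_{ijk}.
\]
To evaluate $\wt h^\alpha_{ijk}\triangle\wt h^\alpha_{ijk}$, I commute derivatives with the Ricci formulas \eqref{ric1} and \eqref{ric2}. This gives $\triangle\wt h^\alpha_{ijk}=(\triangle\wt h^\alpha_{ij})_k$ plus curvature terms. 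I then insert Corollary \ref{coro1gap}\eqref{lapab}, namely $\triangle\ta=\frac12(4c-3\wt S)\ta$ and $\triangle\tb=\frac12(4c-3\wt S)\tb$, which yields
\[
(\triangle\wt h^\alpha_{ij})_k=\tfrac12(4c-3\wt S)\wt h^\alpha_{ijk}-\tfrac32\wt S_k\wt h^\alpha_{ij}.
\]
Contracting with $\wt h^\alpha_{ijk}$ and using $\wt h^\alpha_{ij}\wt h^\alpha_{ijk}=\frac12\wt S_k$, this part contributes $\frac12(4c-3\wt S)\B_1-\frac34|\nabla\wt S|^2$.

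For the commutator terms I use the Gauss curvature \eqref{cur1}, the normal curvature \eqref{cur2} and its derivative \eqref{cur3}, written in the $(\ta,\tb,\ta_1,\ta_2)$ notation. The simplifications from Corollary \ref{coro1gap}\eqref{abrel} are essential here: $\ta\perp\tb$ and $|\ta|^2=|\tb|^2=\wt S/4$. Differentiating these relations also gives $\langle\ta,\ta_k\rangle=\langle\tb,\tb_k\rangle$ and $\langle\ta_k,\tb\rangle=-\langle\ta,\tb_k\rangle$. With these, and after expanding the $\B_1$ terms, I expect the result to have the form
\[
\tfrac12\triangle\B_1=\B_2+\lambda(\wt S)\B_1+\mu|\nabla\wt S|^2.
\]
Here $\lambda$ is linear in $\wt S$ (I anticipate roughly $\frac12(4c-3\wt S)+(2c-\wt S)-\kappa\wt S$), and the normal-curvature contribution is again absorbed as a multiple of $\wt S\B_1$. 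Identifying $\lambda$ and $\mu$ precisely, with all cross terms like $\langle\ta,\ta_2\rangle\langle\tb,\ta_1\rangle$ cancelling or combining into $|\nabla\wt S|^2$, is the main obstacle. I would carry out this bookkeeping exactly as in the proof of Theorem \ref{1stlap}.

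Next I integrate, so $\int_M\triangle\B_1=0$. I replace $\B_1$ by $\frac12\triangle\wt S+\frac12\wt S(3\wt S-4c)$, and integrate by parts terms like $\int\wt S\triangle\wt S=-\int|\nabla\wt S|^2$. This produces
\[
\int_M\B_2=\int_M\Big[\tfrac14\wt S(3\wt S-4c)^2+\tfrac12\wt S(3\wt S-4c)(3\wt S-5c)-\tfrac12|\nabla\wt S|^2\Big].
\]
The coefficients must be fixed by the computation, and I would check them against the case $H=0$ from \cite{Ding25}. This is exactly the stated identity. The lower bound $\ge0$ then requires
\[
\B_2\ge\tfrac14\wt S(3\wt S-4c)^2-\tfrac12|\nabla\wt S|^2.
\]
I would obtain this pointwise from a Kato-type estimate, $\B_2\ge\sum(\triangle\wt h^\alpha_{ij})^2/2$ plus gradient contributions. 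Since $\sum(\triangle\wt h^\alpha_{ij})^2=\frac14(4c-3\wt S)^2\wt S$ by Corollary \ref{coro1gap}, it suffices to show $\B_2\ge\frac12|\triangle\wt h|^2$ up to the $|\nabla\wt S|^2$ correction. I would get this by decomposing $\wt h^\alpha_{ijkl}$ into its trace part (the Laplacian) and the components forced by Codazzi and Ricci symmetry in dimension two.

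Finally, for the gap statement: if $\frac43c\le\wt S\le\frac53c$, the integrand $\wt S(3\wt S-4c)(3\wt S-5c)$ is $\le0$, while its integral is $\ge0$. Hence the integrand vanishes identically, and by continuity and connectedness $\wt S\equiv\frac43c$ or $\wt S\equiv\frac53c$.
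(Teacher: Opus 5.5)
\textbf{Identity.} Your route to the identity is the paper's. Contracting $(\triangle\wt h^\alpha_{ij})_k=\frac12(4c-3\wt S)\wt h^\alpha_{ijk}-\frac32\wt S_k\wt h^\alpha_{ij}$ with $\wt h^\alpha_{ijk}$ is pointwise the same as the paper's $(\wt h^\alpha_{ijk}\triangle\wt h^\alpha_{ij})_k-\sum(\triangle\wt h^\alpha_{ij})^2$, and your target identity is correct.

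That identity forces $\lambda=7c-\frac92\wt S$ and $\mu=-\frac74$, so your anticipated $\lambda$ has the wrong constant. The correct contributions are:
\begin{itemize}
\item the Gauss-curvature commutators $4\wt h^\alpha_{ijk}\wt h^\alpha_{pjm}R_{pikm}+\wt h^\alpha_{ijk}\wt h^\alpha_{ijp}R_{pmkm}$ give $\frac52(2c-\wt S)\B_1$, since the trace $\wt h^\alpha_{kjk}$ vanishes by Codazzi and tracelessness;
\item the normal-curvature terms give $-\frac12\wt S\B_1-\frac12|\nabla\wt S|^2$;
\item the derivative of the Gauss curvature gives a further $-\frac12|\nabla\wt S|^2$.
\end{itemize}

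\textbf{Sign.} The genuine gap is in the nonnegativity. You reduce $\ge0$ to a Kato-type bound $\B_2\ge\frac12|\triangle\wt h|^2$ ``up to the gradient correction''. What is actually needed is $\int_M\bigl(\B_2-|\triangle\wt h|^2+\frac12|\nabla\wt S|^2\bigr)\ge0$. The generic constant $\frac12$ leaves a deficit $\frac12|\triangle\wt h|^2=\frac18\wt S(3\wt S-4c)^2$ that $\frac12|\nabla\wt S|^2$ cannot absorb. On Calabi's sphere with $\wt S\equiv\frac53c$ the gradient vanishes while the deficit equals $\frac5{24}c^3$ everywhere, so the weaker bound only gives a negative lower bound.

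The missing point is that Codazzi doubles the Kato constant. Since $\tb_1=\ta_2$ and $\tb_2=-\ta_1$, you have $\B_2=4\sum_{k,l}|\ta_{kl}|^2$, $\triangle\ta=\ta_{11}+\ta_{22}$ and $\triangle\tb=\ta_{21}-\ta_{12}$. This gives the exact splitting
\[
\B_2=\sum_{i,j,\alpha}(\triangle\wt h^\alpha_{ij})^2+\C_1,\qquad \C_1=2\sum_\alpha\bigl[(\ta^\alpha_{11}-\ta^\alpha_{22})^2+(\ta^\alpha_{12}+\ta^\alpha_{21})^2\bigr]\ge0,
\]
with no gradient term at all. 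The right-hand side of the identity is then $\int_M(2\C_1+|\nabla\wt S|^2)\ge0$, which is how the paper concludes. Your final pinching argument then goes through as written.
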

\begin{proof}
	By Ricci's formula \eqref{ric1}, we have
	\begin{equation*}
		\begin{aligned}
			\wt h_{ijk}^{\alpha}\tri\wt h_{ijk}^{\alpha}&=(\wt h_{ijk}^{\alpha}\tri\wt h_{ij}^{\alpha})_k-\sum_{i,j,\alpha}(\tri\wt h_{ij}^{\alpha})^2+2\wt h_{ijk}^{\alpha}\wt h_{pj}^{\alpha}R_{pikmm}+\wt h_{ijk}^{\alpha}\wt h_{ijp}^{\alpha}R_{pmkm}\\
			&\qquad+4\wt h_{ijk}^{\alpha}\wt h_{pjm}^{\alpha}R_{pikm}+2\wt h_{ijk}^{\alpha}\wt h_{ijm}^{\beta}R_{\beta\alpha km}+\wt h_{ijk}^{\alpha}\wt h_{ij}^{\beta}R_{\beta\alpha kmm}.
		\end{aligned}
	\end{equation*}
	By Corollary \ref{coro1gap},
	\begin{equation*}
		\begin{aligned}
			\sum_{i,j,\alpha}(\tri\wt h_{ij}^{\alpha})^2&=2\sum_{\alpha}\left((\tri\ta^{\alpha})^2+(\tri\tb^{\alpha})^2\right)\\
			&=\frac{1}{4}\wt S(3\wt S-4c)^2.
		\end{aligned}
	\end{equation*}
	By \eqref{cur1}, we have
	\begin{equation*}
		2\wt h_{ijk}^{\alpha}\wt h_{pj}^{\alpha}R_{pikmm}=-\frac{1}{2}|\nabla\wt S|^2,
	\end{equation*}
	and
	\begin{equation*}
		4\wt h_{ijk}^{\alpha}\wt h_{pjm}^{\alpha}R_{pikm}+\wt h_{ijk}^{\alpha}\wt h_{ijp}^{\alpha}R_{pmkm}=\frac{5}{2}(2c-\wt S)\B_1.
	\end{equation*}
	Combining Corollary \ref{coro1gap} and \eqref{cur2}, we get
	\begin{equation*}
		\begin{aligned}
			2\wt h_{ijk}^{\alpha}\wt h_{ijm}^{\beta}R_{\beta\alpha km}&=16(\ta_1^{\alpha}\ta_2^{\beta}-\ta_2^{\alpha}\ta_1^{\beta})(\ta^{\beta}\tb^{\alpha}-\ta^{\alpha}\tb^{\beta})\\
			&=-\frac{1}{2}|\nabla\wt S|^2.
		\end{aligned}
	\end{equation*}
	Finally, by \eqref{cur3} and Corollary \ref{coro1gap} \eqref{abrel}, a direct computation gives that
	\begin{equation*}
		\wt h_{ijk}^{\alpha}\wt h_{ij}^{\beta}R_{\beta\alpha kmm}=-\frac{1}{2}\wt S\B_1.
	\end{equation*}
	Hence, we obtain that
	\begin{equation*}
		\wt h_{ijk}^{\alpha}\tri\wt h_{ijk}^{\alpha}=(\wt h_{ijk}^{\alpha}\tri\wt h_{ij}^{\alpha})_k+(5c-3\wt S)\B_1-|\nabla\wt S|^2-\frac{1}{4}\wt S(3\wt S-4c)^2.
	\end{equation*}
	Next, we have the representation
	\begin{equation*}
		\begin{aligned}
			\B_2&=4\sum_{\alpha}\left((\ta_{11}^{\alpha})^2+(\ta_{22}^{\alpha})^2+(\ta_{12}^{\alpha})^2+(\ta_{21}^{\alpha})^2\right)\\
			&=2\sum_{\alpha}\left((\ta_{11}^{\alpha}+\ta_{22}^{\alpha})^2+(\ta_{12}^{\alpha}-\ta_{21}^{\alpha})^2\right)+2\sum_{\alpha}\left((\ta_{11}^{\alpha}-\ta_{22}^{\alpha})^2+(\ta_{12}^{\alpha}+\ta_{21}^{\alpha})^2\right)\\
			&=2\sum_{\alpha}\left((\tri\ta^{\alpha})^2+(\tri\tb^{\alpha})^2\right)+\C_1\\
			&=\frac{1}{4}\wt S(3\wt S-4c)^2+\C_1,
		\end{aligned}
	\end{equation*}
	where
	\begin{equation*}
		\begin{aligned}
			\C_1&=2\sum_{\alpha}\left((\ta_{11}^{\alpha}-\ta_{22}^{\alpha})^2+(\ta_{12}^{\alpha}+\ta_{21}^{\alpha})^2\right)\\
			&=\B_2-\frac{1}{4}\wt S(3\wt S-4c)^2\\
			&\ge0.
		\end{aligned}
	\end{equation*}
	Hence we conclude that
	\begin{equation*}
		\frac{1}{2}\tri\B_1=(\wt h_{ijk}^{\alpha}\tri\wt h_{ij}^{\alpha})_k+(5c-3\wt S)\B_1-|\nabla\wt S|^2+\C_1.
	\end{equation*}
	Combining Theorem \ref{1stlap} and $2\wt S\tri\wt S=\tri(\wt S^2)-2|\nabla\wt S|^2$, we obtain that
	\begin{equation*}
		\frac{1}{2}\tri\B_1=(\wt h_{ijk}^{\alpha}\tri\wt h_{ij}^{\alpha})_k+\frac{5}{2}c\tri\wt S-\frac{3}{4}\tri(\wt S^2)+\frac{1}{2}|\nabla\wt S|^2-\frac{1}{2}\wt S(3\wt S-4c)(3\wt S-5c)+\C_1.
	\end{equation*}
	Integrating both sides over $M$, we arrive at
	\begin{equation*}
		\begin{aligned}
			\int_M\wt S(3\wt S-4c)(3\wt S-5c)&=\int_M\left(2\C_1+|\nabla\wt S|^2\right)\\
			&=2\int_M\left[\B_2-\frac{1}{4}\wt S(3\wt S-4c)^2+\frac{1}{2}|\nabla\wt S|^2\right]\\
			&\ge0.
		\end{aligned}
	\end{equation*}
	If $\frac{4}{3}c\le\wt S\le\frac{5}{3}c$, then $\wt S(3\wt S-4c)(3\wt S-5c)\le0$. Combining with the above integral identity, we obtain that $$\wt S(3\wt S-4c)(3\wt S-5c)=0,$$ and therefore $\wt S\equiv\frac{4}{3}c$ or $\wt S\equiv\frac{5}{3}c$.
\end{proof}

\section{The third gap}
Before deriving the third Simons-type identities, we collect two auxiliary formulas for the traceless second fundamental form.

\begin{lemma}\label{eq:pmc-s3-a1-a2-relations}
	Let $M$ be a closed surface immersed in $\Sn$ with parallel mean curvature vector and positive Gaussian curvature. Then we have
	\begin{equation*}
		\begin{aligned}
			\langle\ta_1,\ta_2\rangle=0,&\quad|\ta_1|^2=|\ta_2|^2=\frac{1}{8}\B_1,\\
			\langle\ta,\ta_1\rangle=\langle\tb,\ta_2\rangle=\frac{1}{8}\wt S_1,&\quad\langle\ta,\ta_2\rangle=-\langle\tb,\ta_1\rangle=\frac{1}{8}\wt S_2,\\
			\langle \ta,\ta_{11}\rangle=\langle \tb,\ta_{21}\rangle=\frac{1}{8}(\wt S_{11}-\B_1),&\quad\langle\ta,\ta_{22}\rangle=-\langle\tb,\ta_{12}\rangle=\frac{1}{8}(\wt S_{22}-\B_1),\\
			\langle\ta,\ta_{12}\rangle=\langle \tb,\ta_{22}\rangle=\frac{1}{8}\wt S_{12},&\quad\langle \ta,\ta_{21}\rangle=-\langle \tb,\ta_{11}\rangle=\frac{1}{8}\wt S_{21},\\
			\langle \ta_1,\ta_{21}\rangle=-\langle \ta_2,\ta_{11}\rangle,&\quad\langle \ta_1,\ta_{22}\rangle=-\langle \ta_2,\ta_{12}\rangle,\\
			\langle \ta_1,\ta_{11}\rangle=\langle \ta_2,\ta_{21}\rangle=\frac{1}{16}(\B_1)_1,&\quad\langle \ta_1,\ta_{12}\rangle=\langle \ta_2,\ta_{22}\rangle=\frac{1}{16}(\B_1)_2.
		\end{aligned}
	\end{equation*}
\end{lemma}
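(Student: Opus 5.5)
The plan is to start from the pointwise relations of Corollary \ref{coro1gap}, namely $\langle\ta,\tb\rangle=0$ and $|\ta|^2=|\tb|^2=\frac14\wt S$, and differentiate them covariantly once and twice. Since covariant differentiation of the normal-valued tensors is compatible with the inner product on the normal bundle, every derivative of $|\ta|^2-|\tb|^2=0$ and $\langle\ta,\tb\rangle=0$ is again an identity. The Codazzi relations $\tb_1=\ta_2$, $\tb_2=-\ta_1$, together with $\B_1=4(|\ta_1|^2+|\ta_2|^2)$, will then convert every derivative of $\tb$ into a derivative of $\ta$.

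\textbf{First-order relations.} Differentiate $|\ta|^2=\frac14\wt S$ in direction $k$ to get $\langle\ta,\ta_k\rangle=\frac18\wt S_k$. Differentiating $|\tb|^2=\frac14\wt S$ gives $\langle\tb,\tb_k\rangle=\frac18\wt S_k$. By Codazzi this reads $\langle\tb,\ta_2\rangle=\frac18\wt S_1$ and $-\langle\tb,\ta_1\rangle=\frac18\wt S_2$, which are the second line of the lemma.

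\textbf{Statements about $\ta_1,\ta_2$.} For $\langle\ta_1,\ta_2\rangle=0$ and $|\ta_1|=|\ta_2|$, I would use the second-order consequence of $\langle\ta,\tb\rangle=0$. Differentiating it once gives $\langle\ta_k,\tb\rangle+\langle\ta,\tb_k\rangle=0$, which is consistent with the above. Differentiating again and taking the trace in $k$ gives
\[
\langle\tri\ta,\tb\rangle+2\langle\ta_k,\tb_k\rangle+\langle\ta,\tri\tb\rangle=0.
\]
By Corollary \ref{coro1gap} \eqref{lapab} the Laplacian terms are multiples of $\langle\ta,\tb\rangle=0$, so $\langle\ta_k,\tb_k\rangle=0$. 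Codazzi turns this into $\langle\ta_1,\ta_2\rangle-\langle\ta_2,\ta_1\rangle=0$, which is trivial, so this route gives nothing. Instead I take the traced second derivative of $|\ta|^2-|\tb|^2=0$:
\[
2\langle\ta,\tri\ta\rangle+2|\ta_k|^2-2\langle\tb,\tri\tb\rangle-2|\tb_k|^2=0.
\]
The Laplacian terms cancel because $|\ta|=|\tb|$, and Codazzi gives $|\tb_k|^2=|\ta_1|^2+|\ta_2|^2$, so this is also trivial. Hence this is the main obstacle: the traced identities are automatic. I would therefore expect to need the holomorphicity structure instead. The quantity $\ta-i\tb$ satisfies a Cauchy–Riemann-type relation by Codazzi, and the conditions $\langle\ta,\tb\rangle=0$, $|\ta|=|\tb|$ say that $\langle\Phi,\Phi\rangle=0$ for $\Phi=\ta+i\tb$. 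The plan is to differentiate the complex identity $\langle\Phi,\Phi\rangle\equiv0$ in the $\partial_{\bar z}$ direction and use the Codazzi equation $\Phi_{\bar z}=0$ (suitably normalized) to get $\langle\Phi_z,\Phi_z\rangle=0$ from the second derivative, via $\partial_{\bar z}\partial_z\langle\Phi,\Phi\rangle$, together with the fact that $\Phi_{z\bar z}$ is proportional to $\Phi$ by Corollary \ref{coro1gap} \eqref{lapab}. The relation $\langle\Phi_z,\Phi_z\rangle=0$, where $\Phi_z$ is built from $\ta_1-i\ta_2$, yields exactly $|\ta_1|^2=|\ta_2|^2$ and $\langle\ta_1,\ta_2\rangle=0$. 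Combined with $\B_1=4(|\ta_1|^2+|\ta_2|^2)$, this gives $|\ta_1|^2=|\ta_2|^2=\frac18\B_1$.

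\textbf{Second-order relations.} Differentiating $\langle\ta,\ta_i\rangle=\frac18\wt S_i$ in direction $j$ gives
\[
\langle\ta_j,\ta_i\rangle+\langle\ta,\ta_{ij}\rangle=\frac18\wt S_{ij},
\]
and inserting the first-order $\ta_1,\ta_2$ relations gives the third and fourth lines for $\ta$. The $\tb$-versions come from differentiating $\langle\tb,\ta_2\rangle=\frac18\wt S_1$ and $\langle\tb,\ta_1\rangle=-\frac18\wt S_2$, using $\tb_j$ expressed through Codazzi. Here one must use $\tb_{ij}$ only through $\tb_j$, i.e. the terms $\langle\tb_j,\ta_i\rangle$, which reduce by the previous step. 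Finally, differentiating $\langle\ta_1,\ta_2\rangle=0$ gives the fifth line. Differentiating $|\ta_1|^2=|\ta_2|^2=\frac18\B_1$ in direction $k$ gives $\langle\ta_1,\ta_{1k}\rangle=\langle\ta_2,\ta_{2k}\rangle=\frac1{16}(\B_1)_k$. The last line then requires identifying $\ta_{2k}$ with $\ta_{k2}$ modulo curvature terms by Ricci's formula \eqref{ric1}, where the curvature terms pair to zero against $\ta_2$ by the first-order orthogonality relations. This last swap of indices is the step I would verify most carefully.
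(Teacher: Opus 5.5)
The first line of the lemma, $\langle\ta_1,\ta_2\rangle=0$ and $|\ta_1|^2=|\ta_2|^2$, is not established by your argument, and every other line depends on it. Your derivation of the second through sixth lines from the first two is fine: you differentiate $|\ta|^2=|\tb|^2=\frac14\wt S$, then the first two lines, and apply Codazzi. (For the last line no Ricci commutation is needed. By definition $\ta_{ij}=(\ta_i)_j$, so differentiating $|\ta_2|^2=\frac18\B_1$ in direction $k$ gives $\langle\ta_2,\ta_{2k}\rangle=\frac1{16}(\B_1)_k$ directly.)

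Your route through $\partial_{\bar z}\partial_z\langle\Phi,\Phi\rangle$ is as vacuous as the two traced identities you discarded. Put $\Psi=\ta-\sqrt{-1}\,\tb$. Codazzi gives $\Psi_1+\sqrt{-1}\,\Psi_2=0$ and $\frac12(\Psi_1-\sqrt{-1}\,\Psi_2)=\ta_1-\sqrt{-1}\,\ta_2$. So the target is
\[
\langle\Psi_z,\Psi_z\rangle=|\ta_1|^2-|\ta_2|^2-2\sqrt{-1}\langle\ta_1,\ta_2\rangle=0 .
\]
However,
\[
\partial_{\bar z}\partial_z\langle\Psi,\Psi\rangle=2\langle\Psi_{z\bar z},\Psi\rangle+2\langle\Psi_z,\Psi_{\bar z}\rangle ,
\]
and both terms vanish identically, since $\Psi_{\bar z}=0$, $\Psi_{z\bar z}$ is a multiple of $\Psi$, and $\langle\Psi,\Psi\rangle=0$. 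The quantity $\langle\Psi_z,\Psi_z\rangle$ occurs only in $\partial_z^2\langle\Psi,\Psi\rangle=2\langle\Psi_{zz},\Psi\rangle+2\langle\Psi_z,\Psi_z\rangle$. There it is balanced against $\langle\Psi_{zz},\Psi\rangle$, which you do not control pointwise. No local differentiation argument can close this: the first line is a global statement.

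The missing idea is to use your own ingredients to show that $\langle\Psi_z,\Psi_z\rangle$ is holomorphic, not zero. We have $\partial_{\bar z}\langle\Psi_z,\Psi_z\rangle=2\langle\Psi_{z\bar z},\Psi_z\rangle$, which is a multiple of $\langle\Psi,\Psi_z\rangle=\frac12\partial_z\langle\Psi,\Psi\rangle=0$. In real form this is the paper's computation
\[
e_1\bigl(|\ta_1|^2-|\ta_2|^2\bigr)+e_2\bigl(2\langle\ta_1,\ta_2\rangle\bigr)=2\langle\ta_1,\tri\ta\rangle-2\langle\ta_2,\tri\tb\rangle=(4c-3\wt S)\bigl(\langle\ta,\ta_1\rangle-\langle\tb,\ta_2\rangle\bigr)=0,
\]
together with the companion Cauchy--Riemann equation. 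It uses $\ta_{11}+\ta_{22}=\tri\ta$ (by definition), $\ta_{12}-\ta_{21}=-\tri\tb$ (by Codazzi), Corollary \ref{coro1gap} \eqref{lapab}, and your second line. Hence $\phi=\bigl(|\ta_1|^2-|\ta_2|^2-2\sqrt{-1}\langle\ta_1,\ta_2\rangle\bigr)dz^6$ is a holomorphic differential on $M$.

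The decisive extra input is topological. Since $K>0$, Gauss--Bonnet makes $M$, after passing to the orientable double cover if necessary, a $2$-sphere. A $2$-sphere carries no nonzero holomorphic differentials of positive degree, so $\phi\equiv0$. That is exactly the first line, and the rest of your argument then goes through unchanged.
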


\begin{proof}
	Define $$\phi=\left(|\wt a_1|^2-|\wt a_2|^2-2\sqrt{-1}\langle \wt a_1,\wt a_2\rangle\right)dz^6,$$ which is independent of the choice of local coordinates. We show that it is holomorphic. Indeed, using \eqref{cod} and Corollary \ref{coro1gap} \eqref{lapab}, we have
	\begin{equation*}
		\begin{aligned}
			e_1\left(|\wt a_1|^2-|\wt a_2|^2\right)+e_2\left(2\langle\wt a_1,\wt a_2\rangle\right)&=2\langle\wt a_1,\wt a_{11}\rangle-2\langle\wt a_2,\wt a_{21}\rangle+2\langle\wt a_{12},\wt a_2\rangle+2\langle\wt a_1,\wt a_{22}\rangle\\
			&=2\langle\wt a_1,\triangle\wt a\rangle-2\langle\wt a_2,\triangle\wt b\rangle\\
			&=0.
		\end{aligned}
	\end{equation*}
	Similarly, $$e_2\left(|\wt a_1|^2-|\wt a_2|^2\right)-e_1\left(2\langle\wt a_1,\wt a_2\rangle\right)=0.$$ Thus $\phi$ is holomorphic. Since $\dim M=2$, every holomorphic differential of degree $4$ vanishes identically. Together with $\B_1=4\left(|\wt a_1|^2+|\wt a_2|^2\right)$, we obtain $$\langle\wt a_1,\wt a_2\rangle=0\text{ and }|\wt a_1|^2=|\wt a_2|^2=\frac18\B_1.$$ Differentiating Corollary \ref{coro1gap} \eqref{abrel}, we obtain $$\langle\wt a,\wt a_1\rangle=\langle\wt b,\wt a_2\rangle=\frac18\wt S_1\text{ and }\langle\wt a,\wt a_2\rangle=-\langle\wt b,\wt a_1\rangle=\frac18\wt S_2.$$ Differentiating once more, we obtain
	\begin{equation*}
		\begin{aligned}
			\langle\wt a,\wt a_{11}\rangle&=\langle\wt b,\wt a_{21}\rangle=\frac18(\wt S_{11}-\B_1),\\
			\langle\wt a,\wt a_{22}\rangle&=-\langle\wt b,\wt a_{12}\rangle=\frac18(\wt S_{22}-\B_1),\\
			\langle\wt a,\wt a_{12}\rangle&=\langle\wt b,\wt a_{22}\rangle=\frac18\wt S_{12},\\
			\langle\wt a,\wt a_{21}\rangle&=-\langle\wt b,\wt a_{11}\rangle=\frac18\wt S_{21}.
		\end{aligned}
	\end{equation*}
	Finally, differentiating $\langle\wt a_1,\wt a_2\rangle=0$ and $|\wt a_1|^2=|\wt a_2|^2=\frac18\B_1$, we obtain
	\begin{equation*}
		\begin{aligned}
			\langle\wt a_1,\wt a_{21}\rangle=-\langle\wt a_2,\wt a_{11}\rangle,&\quad\langle\wt a_1,\wt a_{22}\rangle=-\langle\wt a_2,\wt a_{12}\rangle,\\
			\langle\wt a_1,\wt a_{11}\rangle=\langle\wt a_2,\wt a_{21}\rangle=\frac1{16}(\B_1)_1,&\quad\langle\wt a_1,\wt a_{12}\rangle=\langle\wt a_2,\wt a_{22}\rangle=\frac1{16}(\B_1)_2.
		\end{aligned}
	\end{equation*}
	The proof is complete.
\end{proof}

\begin{lemma}\label{eq:pmc-s3-half-delta-B1}
	Let $M$ be a closed surface immersed in $\Sn$ with parallel mean curvature vector and positive Gaussian curvature. Then we have
	\begin{equation*}
		\frac{1}{2}\tri\B_1=\frac{7}{2}c\tri\wt S-\frac{9}{8}\tri(\wt S^2)+\frac{1}{2}|\nabla\wt S|^2-\frac{1}{4}\wt S(3\wt S-4c)(9\wt S-14c)+\B_2.
	\end{equation*}
\end{lemma}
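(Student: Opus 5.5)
The plan is to redo the Bochner-type computation from the proof of Theorem \ref{2ndint}, but keep every term pointwise instead of integrating. The divergence term $(\wt h_{ijk}^{\alpha}\tri\wt h_{ij}^{\alpha})_k$ is then evaluated explicitly using Corollary \ref{coro1gap}, and finally $\B_1$ is eliminated via Theorem \ref{1stlap}. Concretely, I start from the Bochner formula
\begin{equation*}
\frac12\tri\B_1=\B_2+\wt h_{ijk}^{\alpha}\tri\wt h_{ijk}^{\alpha}.
\end{equation*}
I then use the pointwise identity already derived in the proof of Theorem \ref{2ndint}:
\begin{equation*}
\wt h_{ijk}^{\alpha}\tri\wt h_{ijk}^{\alpha}=(\wt h_{ijk}^{\alpha}\tri\wt h_{ij}^{\alpha})_k+(5c-3\wt S)\B_1-|\nabla\wt S|^2-\frac14\wt S(3\wt S-4c)^2 .
\end{equation*}
That identity relied only on Ricci's formulas, \eqref{cur1}--\eqref{cur3} and Corollary \ref{coro1gap}, so it holds pointwise.

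Next I expand the divergence term by the product rule:
\begin{equation*}
(\wt h_{ijk}^{\alpha}\tri\wt h_{ij}^{\alpha})_k=\wt h_{ijkk}^{\alpha}\tri\wt h_{ij}^{\alpha}+\wt h_{ijk}^{\alpha}(\tri\wt h_{ij}^{\alpha})_k .
\end{equation*}
The first piece is $\sum(\tri\wt h_{ij}^\alpha)^2=\frac14\wt S(3\wt S-4c)^2$, exactly as computed in Theorem \ref{2ndint}. For the second piece, Corollary \ref{coro1gap} \eqref{lapab} gives $\tri\wt h_{ij}^\alpha=\frac12(4c-3\wt S)\wt h_{ij}^\alpha$. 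Differentiating this yields $\frac12(4c-3\wt S)\wt h_{ijk}^\alpha-\frac32\wt S_k\wt h_{ij}^\alpha$. Contracting with $\wt h_{ijk}^\alpha$, and using $\wt h_{ijk}^\alpha\wt h_{ij}^\alpha=\frac12\wt S_k$, gives $\frac12(4c-3\wt S)\B_1-\frac34|\nabla\wt S|^2$.

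Adding everything up, the $\frac14\wt S(3\wt S-4c)^2$ terms cancel, and I expect
\begin{equation*}
\frac12\tri\B_1=\B_2+\left(7c-\frac92\wt S\right)\B_1-\frac74|\nabla\wt S|^2 .
\end{equation*}
Now substitute $\B_1=\frac12\tri\wt S+\frac12\wt S(3\wt S-4c)$ from Theorem \ref{1stlap}. The Laplacian part becomes $\frac72c\tri\wt S-\frac94\wt S\tri\wt S$. Rewriting the second term with $2\wt S\tri\wt S=\tri(\wt S^2)-2|\nabla\wt S|^2$ turns it into $-\frac98\tri(\wt S^2)+\frac94|\nabla\wt S|^2$. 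Combined with $-\frac74|\nabla\wt S|^2$, the gradient terms give $\frac12|\nabla\wt S|^2$. The polynomial part is $(7c-\frac92\wt S)\cdot\frac12\wt S(3\wt S-4c)=-\frac14\wt S(3\wt S-4c)(9\wt S-14c)$. This is exactly the claimed identity.

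The main point to check carefully is that the curvature-term evaluations in the proof of Theorem \ref{2ndint} are genuinely pointwise, with no integration by parts hidden in them. Two of them need re-verification as pointwise statements via Lemma \ref{eq:pmc-s3-a1-a2-relations} and Corollary \ref{coro1gap}: the normal-curvature term $2\wt h^\alpha_{ijk}\wt h^\beta_{ijm}R_{\beta\alpha km}=-\frac12|\nabla\wt S|^2$, and $\wt h^\alpha_{ijk}\wt h^\beta_{ij}R_{\beta\alpha kmm}=-\frac12\wt S\B_1$. Beyond that, the remaining work is bookkeeping of coefficients.
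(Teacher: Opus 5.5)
Your proposal is correct and has the same structure as the paper's proof: the Bochner formula, the pointwise identity from the proof of Theorem \ref{2ndint} (its curvature terms are indeed pointwise, as you suspected), evaluation of the divergence term, and elimination of $\B_1$ via Theorem \ref{1stlap}. The only difference is in the divergence term: the paper gets $(\wt h_{ijk}^{\alpha}\tri\wt h_{ij}^{\alpha})_k=c\tri\wt S-\frac38\tri(\wt S^2)$ by reusing the pointwise formula from the proof of Theorem \ref{1stlap} and cancelling the extra terms with Corollary \ref{coro1gap} and Lemma \ref{eq:pmc-s3-a1-a2-relations}, whereas your product-rule computation from $\tri\wt h=\frac12(4c-3\wt S)\wt h$ reaches the same expression more directly.
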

\begin{proof}
	By the proof of Theorem \ref{2ndint}, we have
	\begin{equation*}
		\thh^\alpha_{ijk}\tri\thh^\alpha_{ijk}=(\thh^\alpha_{ijk}\tri\thh^\alpha_{ij})_k+(5c-3\tS)B_1-|\nabla\tS|^2-\frac14\tS(3\tS-4c)^2.
	\end{equation*}
	Hence
	\begin{equation*}
		\frac12\tri\B_1=(\thh^\alpha_{ijk}\tri\thh^\alpha_{ij})_k+(5c-3\tS)\B_1-|\nabla\tS|^2-\frac14\tS(3\tS-4c)^2+\B_2 .
	\end{equation*}
	By Lemma \ref{eq:pmc-s3-a1-a2-relations} and the proof of Theorem \ref{1stlap}, we have
	\begin{equation*}
		\begin{aligned}
			(\wt h_{ijk}^{\alpha}\tri\wt h_{ij}^{\alpha})_k&=\frac{1}{2}(2c-\wt S)\wt S^2+(2c-\wt S)(\wt S^2-|\wt A|^2-\rho^{\perp})+c\tri\wt S-\frac{3}{8}\tri(\wt S^2)\\
			&\qquad+8\left(\langle\ta,\ta_2\rangle+\langle\tb,\ta_1\rangle\right)^2+8\left(\langle\ta,\ta_1\rangle-\langle\tb,\ta_2\rangle\right)^2\\
			&=c\tri\tS-\frac38\tri(\tS^2).
		\end{aligned}
	\end{equation*}
	Therefore, we obtain the lemma.
\end{proof}

\subsection{The decomposition of $\B_3$}
We next compute $\B_3=|\nabla^3\thh|^2$.
\begin{lemma}\label{lemma:pmc-s3-B3}
	Let $M$ be a closed surface immersed in $\Sn$ with parallel mean curvature vector and positive Gaussian curvature. Then we have
	\begin{equation*}
		\B_3=\frac14(45\tS^2-144c\tS+116c^2)\B_1+\frac{13}{8}(7\tS-8c)|\nabla\tS|^2+\C_2+\C_3,
	\end{equation*}
	where
	\begin{equation*}
		\C_2=2\sum_\alpha\left[(\ta^\alpha_{111}-\ta^\alpha_{122})^2+(\ta^\alpha_{211}-\ta^\alpha_{222})^2\right],
	\end{equation*}
	and
	\begin{equation*}
		\C_3=2\sum_\alpha\left[(\ta^\alpha_{112}+\ta^\alpha_{121})^2+(\ta^\alpha_{212}+\ta^\alpha_{221})^2\right].
	\end{equation*}
\end{lemma}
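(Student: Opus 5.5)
We will mimic the decomposition of $\B_2$ carried out in the proof of Theorem \ref{2ndint}, one derivative higher. First we reduce $\B_3$ to the components $\ta_{ijk}=(\thh^\alpha_{11ijk})$. The Codazzi-type symmetries $\ta_2=\tb_1$, $\tb_2=-\ta_1$ persist after further covariant differentiation, since the traceless symmetry is preserved. Hence every component $\thh^\alpha_{ijklm}$ equals $\pm\ta^\alpha_{k'lm}$ for suitable indices, and
\[
\B_3=4\sum_{j,k,l,\alpha}(\ta^\alpha_{jkl})^2 .
\]
Using the polarization identity $(x^2+y^2)=\tfrac12[(x+y)^2+(x-y)^2]$, exactly as for $\B_2$, we group the eight components into four combinations:
\[
\B_3=2\sum_{j,\alpha}\Big[(\ta^\alpha_{j11}+\ta^\alpha_{j22})^2+(\ta^\alpha_{j12}-\ta^\alpha_{j21})^2\Big]+\C_2+\C_3 .
\]
In this grouping, $\C_2$ and $\C_3$ are the trace-free parts named in the statement. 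The remaining task is to evaluate the first bracket explicitly.

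For the first bracket, the combination $\ta_{j11}+\ta_{j22}$ is the Laplacian of $\ta_j$, and $\ta_{j12}-\ta_{j21}$ is given by the Ricci formula \eqref{ric2}. We will compute each as follows.

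\textbf{Commutator term.} By \eqref{ric2}, together with \eqref{cur1}, \eqref{cur2} and Corollary \ref{coro1gap}, the difference $\ta_{j12}-\ta_{j21}$ is an explicit linear expression in $\ta_1,\ta_2$ with coefficients involving $(2c-\tS)$ and $\ta,\tb$. We will square it and sum, then simplify with Lemma \ref{eq:pmc-s3-a1-a2-relations}.

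\textbf{Laplacian term.} We commute derivatives: $\ta_{jkk}=(\tri\ta)_j+$(curvature terms from \eqref{ric1} differentiated and \eqref{ric2}). By Corollary \ref{coro1gap}\eqref{lapab}, $(\tri\ta)_j=\tfrac12(4c-3\tS)\ta_j-\tfrac32\tS_j\ta$. The curvature terms involve $\tS_j\ta$, $\tS_j\tb$, $(2c-\tS)\ta_j$, and normal-curvature products $R_{\beta\alpha12}\tb_j$ and $R_{\beta\alpha12 k}\tb^\beta$. Each of these is expressed through \eqref{cur2}, \eqref{cur3} and Corollary \ref{coro1gap}.

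After squaring, the inner products that appear are $|\ta_j|^2$, $\langle\ta_1,\ta_2\rangle$, $\langle\ta,\ta_j\rangle$, $\langle\tb,\ta_j\rangle$, $|\ta|^2$, $|\tb|^2$ and $\langle\ta,\tb\rangle$. Lemma \ref{eq:pmc-s3-a1-a2-relations} converts all of them into $\B_1$, $\tS_1$, $\tS_2$ and $\tS$. The result should have the form $P(\tS)\B_1+Q(\tS)|\nabla\tS|^2$. Matching with the statement requires
\[
P=\tfrac14(45\tS^2-144c\tS+116c^2),\qquad Q=\tfrac{13}{8}(7\tS-8c).
\]
As a sanity check, at leading order the squared Laplacian term gives $\tfrac14(3\tS-4c)^2\B_1$-type contributions, in analogy with the $\tfrac14\tS(3\tS-4c)^2$ obtained for $\B_2$.

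\textbf{Main obstacle.} The main obstacle is the bookkeeping of the normal-curvature terms. In particular, the term $\ta^\beta R_{\beta\alpha12k}$ from \eqref{cur3} produces cross products such as $\langle\ta,\ta_1\rangle\langle\tb,\ta_2\rangle$. These must combine, via the relations $\langle\ta,\ta_1\rangle=\langle\tb,\ta_2\rangle=\tfrac18\tS_1$ and their analogues, into multiples of $|\nabla\tS|^2$. To control signs, we will first verify the identity at a point in a frame where $\ta$ and $\tb$ are orthogonal of equal length (as guaranteed by Corollary \ref{coro1gap}), and only then collect the terms.
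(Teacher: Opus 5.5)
Your plan is the paper's proof. You reduce to $\B_3=4\sum_{j,k,l}|\ta_{jkl}|^2$ and split by polarization into $2\sum_j\bigl(|\tri\ta_j|^2+|\ta_{j12}-\ta_{j21}|^2\bigr)+\C_2+\C_3$. You then obtain the commutators from \eqref{ric2} and $\tri\ta_j$ by commuting $(\tri\ta)_j$ through \eqref{ric1} and \eqref{ric2}, and reduce all inner products with Lemma \ref{eq:pmc-s3-a1-a2-relations}. Carried out, this gives $\tri\ta_1=\frac12(14c-9\tS)\ta_1+\frac74(-\ta\tS_1+\tb\tS_2)$ and $\ta^\alpha_{112}-\ta^\alpha_{121}=3\ta^\alpha_2(\frac12\tS-c)+\ta^\beta_1R_{\beta\alpha12}$, and these produce exactly the stated coefficients.

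Your ``sanity check'' is wrong, though. The commutation terms contribute $(5c-3\tS)\ta_j$ at the same order and turn the coefficient $\frac12(4c-3\tS)$ into $\frac12(14c-9\tS)$. So the Laplacian part yields $\frac18(9\tS-14c)^2\B_1$, not a $(3\tS-4c)^2$-type term, and you should not use that heuristic to check your bookkeeping.
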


\begin{proof}
	First, by Corollary \ref{coro1gap} \eqref{lapab} and Ricci's formula \eqref{ric1} we obtain
	\begin{equation}
		\begin{aligned}
	        \tri\ta_1&=\frac12(14c-9\tS)\ta_1+\frac74(-\ta\tS_1+\tb\tS_2),\\
	        \tri\ta_2&=\frac12(14c-9\tS)\ta_2-\frac74(\tb\tS_1+\ta\tS_2).
	       \end{aligned}
	\end{equation}
	Combining Lemma \ref{eq:pmc-s3-a1-a2-relations}, a direct expansion yields that
	\begin{equation}\label{eq:pmc-s3-lap-a1-a2-square}
		|\tri\ta_1|^2+|\tri\ta_2|^2=\frac1{16}(9\tS-14c)^2\B_1
	        +\frac7{32}(25\tS-28c)|\nabla\tS|^2 .
	\end{equation}
	Thus
	\begin{equation*}
	\begin{aligned}
	&\hspace{1.3em}4\sum_\alpha\left((\ta^\alpha_{111})^2+(\ta^\alpha_{122})^2+(\ta^\alpha_{211})^2+(\ta^\alpha_{222})^2\right)\\
	&=2\sum_\alpha\left((\ta^\alpha_{111}+\ta^\alpha_{122})^2+(\ta^\alpha_{211}+\ta^\alpha_{222})^2\right)+\C_2  \\
	&=2(|\tri\ta_1|^2+|\tri\ta_2|^2)+\C_2\\
	&=\frac18(9\tS-14c)^2\B_1+\frac7{16}(25\tS-28c)|\nabla\tS|^2+\C_2,
	\end{aligned}
	\end{equation*}
	where
	\begin{equation*}\label{eq:pmc-s3-C2-def}
	\C_2=2\sum_\alpha\left((\ta^\alpha_{111}-\ta^\alpha_{122})^2+(\ta^\alpha_{211}-\ta^\alpha_{222})^2\right).
	\end{equation*}
	For the remaining part, \eqref{cur2} and Ricci's formula \eqref{ric2} give that
	\begin{equation*}
		\begin{aligned}
			&\hspace{1.3em}4\sum_\alpha\left((\ta^\alpha_{112})^2+(\ta^\alpha_{121})^2+(\ta^\alpha_{212})^2+(\ta^\alpha_{221})^2\right)\\
			&=2\sum_\alpha\left((\ta^\alpha_{112}-\ta^\alpha_{121})^2+(\ta^\alpha_{212}-\ta^\alpha_{221})^2\right)+\C_3\\
			&=2\sum_\alpha\left[\left(3\ta^\alpha_2\left(\frac12\tS-c\right)+\ta^\beta_1R_{\beta\alpha12}\right)^2+\left(3\ta^\alpha_1\left(c-\frac12\tS\right)+\ta^\beta_2R_{\beta\alpha12}\right)^2\right]+\C_3\\
			&=\frac92\left(c-\frac12\tS\right)^2\B_1+\frac1{16}(7\tS-12c)|\nabla\tS|^2+\C_3,
		\end{aligned}
	\end{equation*}
	where
	\begin{equation*}
		\C_3=2\sum_\alpha\left((\ta^\alpha_{112}+\ta^\alpha_{121})^2+(\ta^\alpha_{212}+\ta^\alpha_{221})^2\right).
	\end{equation*}
	Therefore, we obtain that
	\begin{equation*}
		\B_3=\frac14(45\tS^2-144c\tS+116c^2)\B_1+\frac{13}{8}(7\tS-8c)|\nabla\tS|^2+\C_2+\C_3,
	\end{equation*}
	which proves the lemma.
\end{proof}

\subsection{The Laplacian of $\B_2$}
We now compute $\tri\B_2$.
\begin{lemma}
	Let $M$ be a closed surface immersed in $\Sn$ with parallel mean curvature vector and positive Gaussian curvature. Then we have
	\begin{equation*}
		\begin{aligned}
			\frac12\tri\B_2&=(\thh^\alpha_{ijkl}\tri\thh^\alpha_{ijk})_l-(21\tS^2-64c\tS+49c^2)\B_1+\frac14\tS(3\tS-4c)^2(7\tS-12c)+\frac14(\tri\tS)^2\\
			&\hspace{1.3em}+7\left(c-\frac12\tS\right)\B_2-\frac72(7\tS-8c)|\nabla\tS|^2-\langle\nabla\B_1,\nabla\tS\rangle-\frac12|\Hess\tS|^2+\B_3.
		\end{aligned}
	\end{equation*}
\end{lemma}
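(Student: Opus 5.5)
We follow the template of the proof of Theorem \ref{2ndint}, one derivative higher. The starting point is the pointwise identity
\begin{equation*}
	\frac12\tri\B_2=\thh^\alpha_{ijkl}\tri\thh^\alpha_{ijkl}+\B_3 .
\end{equation*}
This reduces the lemma to rewriting $\thh^\alpha_{ijkl}\tri\thh^\alpha_{ijkl}$ as the divergence $(\thh^\alpha_{ijkl}\tri\thh^\alpha_{ijk})_l$ plus explicit terms.

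\textbf{Commuting the Laplacian.} We commute $\tri$ past the last index using Ricci's formulas \eqref{ric2} and \eqref{ric3}. Concretely, we write
\begin{equation*}
	\thh^\alpha_{ijkll m m}=\thh^\alpha_{ijkmml}+(\text{curvature terms}),
\end{equation*}
where the curvature terms involve $R_{pqrs}$, $R_{\beta\alpha 12}$ and their first derivatives. Then
\begin{equation*}
	\thh^\alpha_{ijkl}\tri\thh^\alpha_{ijkl}=(\thh^\alpha_{ijkl}\tri\thh^\alpha_{ijk})_l-|\tri\nabla\thh|^2+(\text{curvature terms}).
\end{equation*}
We substitute the curvature via \eqref{cur1}, \eqref{cur2}, \eqref{cur3}. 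Covariant derivatives of $R_{ijkl}$ produce $-\frac12\tS_k$ and $-\frac12\tS_{km}$. Derivatives of $R_{\beta\alpha12}$ are expanded in $\ta,\tb,\ta_i,\ta_{ij}$ and contracted using Lemma \ref{eq:pmc-s3-a1-a2-relations}.

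\textbf{Grouping the contributions.} The resulting terms fall into four groups.
\begin{itemize}
	\item Terms linear in $K=c-\frac12\tS$ times $\B_2$. They should sum to $7(c-\frac12\tS)\B_2$, with one factor per index of $\thh_{ijkl}$ plus the Laplacian commutation and the normal-curvature contribution. This parallels $\frac52(2c-\tS)\B_1$ in Theorem \ref{2ndint}.
	\item Terms $\thh_{ijkl}\thh_{ijk}\tS_l$, which reduce to $-\langle\nabla\B_1,\nabla\tS\rangle$ plus $|\nabla \tS|^2$ contributions.
	\item Terms $\thh_{ijkl}\thh_{ij}\tS_{kl}$. By Lemma \ref{eq:pmc-s3-a1-a2-relations}, $\langle\ta,\ta_{kl}\rangle$ is expressed through $\tS_{kl}$ and $\B_1$, so these give $-\frac12|\Hess\tS|^2$ and $\B_1\tri\tS$-type terms.
	\item Normal-curvature terms quadratic in $\ta_{ij}$ weighted by $R_{\beta\alpha12}$. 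These are again handled by Lemma \ref{eq:pmc-s3-a1-a2-relations} together with $\langle\ta,\tb\rangle=0$ and $|\ta|=|\tb|$.
\end{itemize}

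\textbf{The term $|\tri\nabla\thh|^2$.} We compute it from the formulas for $\tri\ta_1$ and $\tri\ta_2$ in the proof of Lemma \ref{lemma:pmc-s3-B3}. By \eqref{eq:pmc-s3-lap-a1-a2-square} it gives $(9\tS-14c)^2\B_1$ and $(25\tS-28c)|\nabla\tS|^2$ pieces.

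\textbf{Reduction to the stated form.} The remaining terms are $\B_1\tri\tS$, $\tS|\nabla\tS|^2$, and so on. We eliminate $\tri\tS$ via Theorem \ref{1stlap}:
\begin{equation*}
	\tri\tS=2\B_1-\tS(3\tS-4c).
\end{equation*}
Equivalently,
\begin{equation*}
	\B_1=\frac12\tri\tS+\frac12\tS(3\tS-4c).
\end{equation*}
This is what generates $\frac14(\tri\tS)^2$, the polynomial $\frac14\tS(3\tS-4c)^2(7\tS-12c)$, and the coefficient $-(21\tS^2-64c\tS+49c^2)$ on $\B_1$. The choice of how many copies of $\B_1$ to trade for $(\tri\tS)^2$ is fixed by matching the target expression.

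\textbf{Main obstacle.} The hard step is the bookkeeping of the normal-curvature derivative terms, which involve $R_{\beta\alpha12k}$ and $R_{\beta\alpha12kl}$. One needs the second derivative of \eqref{cur3}, expanded in $\ta_{ij},\ta_i,\ta,\tb$. Every mixed inner product such as $\langle\tb,\ta_{12}\rangle\langle\ta_1,\ta_2\rangle$ must then be reduced via Lemma \ref{eq:pmc-s3-a1-a2-relations}, in particular using $\langle\ta_1,\ta_2\rangle=0$ and its derivatives. I would organize this computation by writing everything in complex notation $\ta_1-\sqrt{-1}\,\ta_2$ to exploit the holomorphic structure. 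As a consistency check, I would verify the result on Calabi's spheres, where $\tS$ is constant and $\B_1=\frac12\tS(3\tS-4c)$.
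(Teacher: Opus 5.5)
Your route is the paper's: start from $\frac12\tri\B_2=\thh^\alpha_{ijkl}\tri\thh^\alpha_{ijkl}+\B_3$, commute with \eqref{ric2} and \eqref{ric3} to get $(\thh^\alpha_{ijkl}\tri\thh^\alpha_{ijk})_l-\sum(\tri\thh^\alpha_{ijk})^2$ plus curvature terms, take $\sum(\tri\thh^\alpha_{ijk})^2$ from \eqref{eq:pmc-s3-lap-a1-a2-square}, and evaluate the rest with \eqref{cur1}--\eqref{cur3} and Lemma \ref{eq:pmc-s3-a1-a2-relations}. But the lemma consists of nothing except that evaluation, which you leave as ``bookkeeping,'' and your forecast of it is wrong in ways that matter.

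First, differentiate \eqref{ric2} and contract to pass from $\thh_{ijklmm}$ to $\thh_{ijkmlm}$, then use \eqref{ric3} to reach $\thh_{ijkmml}$. This produces only $R$ and its \emph{first} covariant derivatives. After contracting with $\thh^\alpha_{ijkl}$, which is symmetric in $i,j,k$, exactly five terms remain:
\begin{equation*}
\begin{aligned}
&6\thh^\alpha_{ijkl}\thh^\alpha_{pjkm}R_{pilm}+\thh^\alpha_{ijkl}\thh^\alpha_{ijkp}R_{pmlm}+3\thh^\alpha_{ijkl}\thh^\alpha_{pjk}R_{pilmm}\\
&\qquad+\thh^\alpha_{ijkl}\thh^\beta_{ijk}R_{\beta\alpha lmm}+2\thh^\alpha_{ijkl}\thh^\beta_{ijkm}R_{\beta\alpha lm}.
\end{aligned}
\end{equation*}
So there are no $\tS_{km}$ terms coming from \eqref{cur1}, no $R_{\beta\alpha12kl}$, and no group of the form $\thh_{ijkl}\thh_{ij}\tS_{kl}$. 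The ``main obstacle'' you describe does not arise on this route.

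Second, the target terms come from places other than where you put them.

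\textbf{Hessian terms.} These enter through the last, underived normal-curvature term. By \eqref{cur2}, Codazzi and Lemma \ref{eq:pmc-s3-a1-a2-relations}, it equals $\det(\Hess\tS-\B_1 g)$. The trace of this matrix is $\tri\tS-2\B_1=-\tS(3\tS-4c)$ by Theorem \ref{1stlap}, so the term equals $\frac14\tS^2(3\tS-4c)^2+\frac14(\tri\tS)^2-\frac12|\Hess\tS|^2$.

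\textbf{Gauss-curvature pair.} This gives $7K\B_2-3K\tS(3\tS-4c)^2$ with $K=c-\frac12\tS$, not just $7K\B_2$. The reason is that $\sum\thh_{ijkl}\thh_{ljki}=\B_2-\frac14\tS(3\tS-4c)^2$, by commuting the last two indices with \eqref{ric1}. In addition, $\thh^\alpha_{ijki}=\tri\thh^\alpha_{jk}$ contributes another $\frac14\tS(3\tS-4c)^2$.

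\textbf{First-derivative terms.} They give
\begin{equation*}
-\tfrac34\langle\nabla\B_1,\nabla\tS\rangle-\tfrac34(3\tS-4c)|\nabla\tS|^2
\end{equation*}
and
\begin{equation*}
-\tfrac14\langle\nabla\B_1,\nabla\tS\rangle-\tfrac18(3\tS-4c)|\nabla\tS|^2-\tfrac14\tS(3\tS-4c)\B_1.
\end{equation*}
For the second, you also need $\ta_{12}-\ta_{21}=\frac12(3\tS-4c)\tb$ and $\ta_{11}+\ta_{22}=\frac12(4c-3\tS)\ta$. These eliminate $\langle\ta_{12},\ta_2\rangle$ and $\langle\ta_{11},\ta_2\rangle$, which Lemma \ref{eq:pmc-s3-a1-a2-relations} does not list.

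\textbf{Assembly.} Add $-\frac14(9\tS-14c)^2\B_1-\frac78(25\tS-28c)|\nabla\tS|^2$ from $-\sum(\tri\thh_{ijk})^2$. This yields the coefficients $-(21\tS^2-64c\tS+49c^2)$, $-\frac72(7\tS-8c)$ and $-1$, and the polynomial $\frac14\tS(3\tS-4c)^2(7\tS-12c)$, directly. There is no need to adjust how many copies of $\B_1$ to trade for $(\tri\tS)^2$ by matching the target.

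Your Calabi-sphere check is a reasonable sanity test, but it cannot replace these five evaluations.
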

\begin{proof}
Ricci's formulas \eqref{ric2} and \eqref{ric3} give that
\begin{equation*}
\begin{aligned}
\frac12\tri\B_2&=(\thh^\alpha_{ijkl}\tri\thh^\alpha_{ijk})_l-\sum(\tri\thh^\alpha_{ijk})^2+6\thh^\alpha_{ijkl}\thh^\alpha_{pjkm}R_{pilm}+\thh^\alpha_{ijkl}\thh^\alpha_{ijkp}R_{pmlm}\\
&+3\thh^\alpha_{ijkl}\thh^\alpha_{pjk}R_{pilmm}+\thh^\alpha_{ijkl}\thh^\beta_{ijk}R_{\beta\alpha lmm}+2\thh^\alpha_{ijkl}\thh^\beta_{ijkm}R_{\beta\alpha lm}+\B_3.
\end{aligned}
\end{equation*}
Each term on the right-hand side is evaluated as follows:
\begin{equation*}
\begin{aligned}
-\sum(\tri\thh^\alpha_{ijk})^2&=-\frac14(9\tS-14c)^2\B_1-\frac78(25\tS-28c)|\nabla\tS|^2.\\
6\thh^\alpha_{ijkl}\thh^\alpha_{pjkm}R_{pilm}+\thh^\alpha_{ijkl}\thh^\alpha_{ijkp}R_{pmlm}&=7\left(c-\frac12\tS\right)\B_2-3\tS\left(c-\frac12\tS\right)(3\tS-4c)^2.\\
3\thh^\alpha_{ijkl}\thh^\alpha_{pjk}R_{pilmm}&=-\frac34\langle\nabla\B_1,\nabla\tS\rangle-\frac34(3\tS-4c)|\nabla\tS|^2.\\
\thh^\alpha_{ijkl}\thh^\beta_{ijk}R_{\beta\alpha lmm}&=-\frac14\langle\nabla\B_1,\nabla\tS\rangle-\frac18(3\tS-4c)|\nabla\tS|^2-\frac14\tS(3\tS-4c)\B_1.\\
2\thh^\alpha_{ijkl}\thh^\beta_{ijkm}R_{\beta\alpha lm}&=\frac14\tS^2(3\tS-4c)^2+\frac14(\tri\tS)^2-\frac12|\Hess\tS|^2.
\end{aligned}
\end{equation*}
Therefore, we get
\begin{equation*}
\begin{aligned}
\frac12\tri\B_2&=(\thh^\alpha_{ijkl}\tri\thh^\alpha_{ijk})_l-(21\tS^2-64c\tS+49c^2)\B_1+\frac14\tS(3\tS-4c)^2(7\tS-12c)\\
&+7\left(c-\frac12\tS\right)\B_2-\frac72(7\tS-8c)|\nabla\tS|^2-\langle\nabla\B_1,\nabla\tS\rangle+\frac14(\tri\tS)^2-\frac12|\Hess\tS|^2+\B_3,
\end{aligned}
\end{equation*}
which proves the lemma.
\end{proof}

\subsection{The third Simons-type integral identity}
We now derive the integral identity used for the third gap.
\begin{theorem}\label{3rdint}
	Let $M$ be a closed surface immersed in $\Sn$ with parallel mean curvature vector and positive Gaussian curvature. Then we have
	\begin{equation*}
		\begin{aligned}
			&\hspace{1.3em}\int_M\tS(3\tS-4c)(3\tS-5c)(5\tS-9c)\\
			&=\int_M\left[\frac32(11\tS-21c)|\nabla\tS|^2-\frac54(\triangle\tS)^2+2\C_2+2\C_3\right]\\
			&=2\int_M\left[\B_3-\frac18\tS(3\tS-4c)(45\tS^2-144c\tS+116c^2)+\frac18(65\tS-166c)|\nabla\tS|^2-\frac58(\triangle\tS)^2\right],
		\end{aligned}
	\end{equation*}
	where $\C_2=2\sum\left[(\ta_{111}^\alpha-\ta_{122}^\alpha)^2+(\ta_{211}^\alpha-\ta_{222}^\alpha)^2\right]$, $\C_3=2\sum\left[(\ta_{112}^\alpha+\ta_{121}^\alpha)^2+(\ta_{212}^\alpha+\ta_{221}^\alpha)^2\right]$.
\end{theorem}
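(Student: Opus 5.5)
We will integrate the Laplacian formula for $\B_2$ from the previous lemma over $M$; the divergence term $(\thh^\alpha_{ijkl}\tri\thh^\alpha_{ijk})_l$ and $\frac12\tri\B_2$ both integrate to zero. This gives an integral identity relating $\int\B_3$ to integrals of $\B_1$, $\B_2$, $|\nabla\tS|^2$, $(\tri\tS)^2$, $|\Hess\tS|^2$ and $\langle\nabla\B_1,\nabla\tS\rangle$. The plan is then to reduce every term to the two basic quantities $\int f(\tS)$ and $\int g(\tS)|\nabla\tS|^2$, plus $\int(\tri\tS)^2$ and $\C_2,\C_3$.

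\textbf{Step 1: eliminate $\B_1$.} Use Theorem \ref{1stlap} in the form $\B_1=\frac12\tri\tS+\frac12\tS(3\tS-4c)$. Multiplying by a polynomial $p(\tS)$ and integrating by parts gives
\begin{equation*}
\int p(\tS)\B_1=\int\Bigl[-\tfrac12p'(\tS)|\nabla\tS|^2+\tfrac12p(\tS)\tS(3\tS-4c)\Bigr].
\end{equation*}
For the cross term, $\int\langle\nabla\B_1,\nabla\tS\rangle=-\int\B_1\tri\tS$. Inserting the same expression for $\B_1$ produces $-\frac12\int(\tri\tS)^2$ together with a term $\int\tS(3\tS-4c)\tri\tS$, which becomes a gradient term after integrating by parts.

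\textbf{Step 2: eliminate $\B_2$.} Integrating Lemma \ref{eq:pmc-s3-half-delta-B1} only gives $\int\B_2$, but the formula above requires $\int(c-\frac12\tS)\B_2$. Instead, use the pointwise decomposition from the proof of Theorem \ref{2ndint},
\begin{equation*}
\B_2=\tfrac14\tS(3\tS-4c)^2+\C_1 .
\end{equation*}
We still have to express $\C_1$ in terms of $\tS$. By Lemma \ref{eq:pmc-s3-a1-a2-relations} the quantity $\C_1$ should be controlled by $|\Hess\tS|^2$, $(\tri\tS)^2$, $\B_1^2$ and related terms, using the holomorphic structure; this is the analogue of $|\tri\ta_i|^2$ in Lemma \ref{lemma:pmc-s3-B3}. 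For the Hessian term, Bochner's formula gives
\begin{equation*}
\int|\Hess\tS|^2=\int(\tri\tS)^2-\int K|\nabla\tS|^2, \qquad K=c-\tfrac12\tS .
\end{equation*}

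\textbf{Step 3: eliminate $\B_3$ and collect.} Substitute Lemma \ref{lemma:pmc-s3-B3} for $\B_3$, and apply Step 1 again to the term $(45\tS^2-144c\tS+116c^2)\B_1$ appearing there. Collecting the polynomial part should give $-\int\tS(3\tS-4c)(3\tS-5c)(5\tS-9c)$, with gradient part $\frac32(11\tS-21c)$ and $(\tri\tS)^2$ coefficient $-\frac54$. The second equality of the theorem then follows by substituting the expression for $\C_2+\C_3$ from Lemma \ref{lemma:pmc-s3-B3}, with $\B_1$ again replaced via Step 1.

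\textbf{Main obstacle.} The hard part is Step 2, namely handling $\int\tS\,\C_1$ (equivalently $\int\tS\B_2$) in closed form. We will first try to show, using Lemma \ref{eq:pmc-s3-a1-a2-relations}, that $\C_1$ is pointwise determined by $\tS$, $\B_1$, $|\nabla\tS|^2$ and $|\Hess\tS|^2$. If that fails, we instead multiply Lemma \ref{eq:pmc-s3-half-delta-B1} by $\tS$ and integrate, reducing $\int\tS\B_2$ to $\int\B_1\tri\tS$, $\int|\nabla\tS|^2$-type terms and $\int\tS\,\tri(\tS^2)$.
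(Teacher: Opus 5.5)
Your plan is correct and is essentially the paper's proof. You integrate the formula for $\frac12\tri\B_2$, replace $\B_3$ by Lemma \ref{lemma:pmc-s3-B3}, $\B_1$ by Theorem \ref{1stlap}, $\int|\Hess\tS|^2$ by the Bochner (Reilly) identity, and $\B_2$ by Lemma \ref{eq:pmc-s3-half-delta-B1}; carrying this out gives exactly $\int\bigl[-\frac12\tS(3\tS-4c)(3\tS-5c)(5\tS-9c)+\frac34(11\tS-21c)|\nabla\tS|^2-\frac58(\tri\tS)^2+\C_2+\C_3\bigr]=0$. The one correction is that your ``main obstacle'' is illusory, because Lemma \ref{eq:pmc-s3-half-delta-B1} is a pointwise identity. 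Your fallback (multiply it by $\tS$ and integrate) is therefore exactly what works, and it matches the paper, which substitutes the lemma pointwise into $7(c-\frac12\tS)\B_2$ and integrates the resulting $\tS\tri^2\tS$ terms by parts; the speculative pointwise formula for $\C_1$ is not needed.
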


\begin{proof}
	Combining Theorem \ref{1stlap}, Lemma \ref{eq:pmc-s3-half-delta-B1} and Lemma \ref{lemma:pmc-s3-B3}, we obtain
	\begin{equation*}
		\begin{aligned}
			\frac12\tri\B_2&=(\thh^\alpha_{ijkl}\tri\thh^\alpha_{ijk})_l-\frac12\wt S(3\wt S-4c)(3\wt S-5c)(5\wt S-9c)-\frac18(91\wt S-92c)|\nabla\wt S|^2\\
			&\hspace{1.3em}-\frac{39}{8}\wt S^2\tri\wt S+\frac{119}{4}c\wt S\tri\wt S+\frac{105}{8}c\tri(\wt S^2)-\frac{105}{16}\wt S\tri(\wt S^2)-\frac{83}{2}c^2\tri\wt S\\
			&\hspace{1.3em}+\frac74c\tri^2\wt S-\frac78\wt S\tri^2\wt S-\langle\nabla\B_1,\nabla \wt S\rangle+\frac14(\tri\wt S)^2-\frac12|\Hess \wt S|^2+\C_2+\C_3.
		\end{aligned}
	\end{equation*}
	Integrating on both sides of this equality, using the elementary identities
	\begin{equation*}
		\begin{aligned}
			\wt S\tri\wt S&=\frac12\tri(\wt S^2)-|\nabla\wt S|^2,\\
			\wt S\tri(\wt S^2)&=\frac23\tri(\wt S^3)-2\wt S|\nabla \wt S|^2,\\
			\wt S^2\tri\wt S&=\frac13\tri(\wt S^3)-2\wt S|\nabla \wt S|^2,\\
			\wt S\tri^2\wt S&=\tri(\wt S\tri\wt S)-2\operatorname{div}(\tri\wt S\nabla\wt S)+(\tri\wt S)^2,
		\end{aligned}
	\end{equation*}
	and
	\begin{equation*}
		\begin{aligned}
			-\langle\nabla\B_1,\nabla\wt S\rangle&=-\operatorname{div}(\B_1\nabla\wt S)+\B_1\tri\wt S\\
			&=-\operatorname{div}(\B_1\nabla \wt S)+\frac12(\tri\wt S)^2+\frac32\wt S^2\tri\wt S-2c\wt S\tri\wt S\\
			&=-\operatorname{div}(\B_1\nabla \wt S)+\frac12(\tri\wt S)^2+\frac12\tri(\wt S^3)-3\wt S|\nabla\wt S|^2-c\tri(\wt S^2)+2c|\nabla\wt S|^2,
		\end{aligned}
	\end{equation*}
	we obtain that
	\begin{equation*}
		\begin{aligned}
			\int_M\frac12\wt S(3\wt S-4c)(3\wt S-5c)(5\wt S-9c)=\int_M\left[\frac14(34\wt S-65c)|\nabla\wt S|^2-\frac18(\tri\wt S)^2-\frac12|\Hess\wt S|^2+\C_2+\C_3\right].
		\end{aligned}
	\end{equation*}
	On the other hand, Reilly's formula gives that
	\begin{equation*}
		\int_M\left[2(\tri\wt S)^2-2|\Hess\wt S|^2+(\wt S-2c)|\nabla\wt S|^2\right]=0.
	\end{equation*}
	Therefore, we obtain
	\begin{equation*}
		\begin{aligned}
			&\hspace{1.3em}\int_M\tS(3\tS-4c)(3\tS-5c)(5\tS-9c)\\
			&=\int_M\left[\frac32(11\tS-21c)|\nabla\tS|^2-\frac54(\triangle\tS)^2+2\C_2+2\C_3\right]\\
			&=2\int_M\left[\B_3-\frac18\tS(3\tS-4c)(45\tS^2-144c\tS+116c^2)+\frac18(65\tS-166c)|\nabla\tS|^2-\frac58(\triangle\tS)^2\right],
		\end{aligned}
	\end{equation*}
	which proves the theorem.
\end{proof}

\subsection{The lower bound for $\C_2+\C_3$}
We now establish the lower bound for $\C_2+\C_3$.
\begin{lemma}\label{infc2c3}
	Let $M$ be a closed surface immersed in $\Sn$ with parallel mean curvature vector and positive Gaussian curvature. Then we have
	\begin{equation*}
		\C_2+\C_3\ge\frac{9}{8}\tS|\nabla\tS|^2.
	\end{equation*}
\end{lemma}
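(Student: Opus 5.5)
Since $\C_2$ and $\C_3$ are sums of squares, the plan is to bound each from below by Cauchy--Schwarz against a test vector built from $\ta,\tb$ and the structure relations of Lemma \ref{eq:pmc-s3-a1-a2-relations}. Write $u=\ta_{111}-\ta_{122}$, $v=\ta_{211}-\ta_{222}$, $w=\ta_{112}+\ta_{121}$, $z=\ta_{212}+\ta_{221}$, all vectors in $\R^{N-2}$. Then
\begin{equation*}
\C_2=2(|u|^2+|v|^2),\qquad \C_3=2(|w|^2+|z|^2).
\end{equation*}
Because $|\ta|^2=|\tb|^2=\frac14\tS$ and $\ta\perp\tb$ (Corollary \ref{coro1gap}), the vectors $\ta,\tb$ are orthogonal of equal length. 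Hence for any vector $X$,
\begin{equation*}
|X|^2\ge\frac{4}{\tS}\left(\langle X,\ta\rangle^2+\langle X,\tb\rangle^2\right),
\end{equation*}
at points where $\tS>0$; where $\tS=0$ the claimed bound is trivial.

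It therefore suffices to compute the eight inner products $\langle u,\ta\rangle,\langle u,\tb\rangle,\dots,\langle z,\tb\rangle$. I would obtain them by differentiating the second-order relations of Lemma \ref{eq:pmc-s3-a1-a2-relations}, e.g. $\langle\ta,\ta_{11}\rangle=\frac18(\tS_{11}-\B_1)$ and $\langle\tb,\ta_{21}\rangle=\frac18(\tS_{11}-\B_1)$, once more. This expresses $\langle\ta,\ta_{ijk}\rangle$ and $\langle\tb,\ta_{ijk}\rangle$ through third derivatives of $\tS$, through $(\B_1)_k$, and through terms $\langle\ta_m,\ta_{ij}\rangle$. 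The last ones are reduced with the final two lines of that lemma together with $\ta_2=\tb_1$ and $\tb_2=-\ta_1$.

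The key expectation is that in the combinations $u,v,w,z$ the third-derivative terms of $\tS$ and the $(\B_1)_k$ terms cancel. This should happen because these combinations are the trace-free parts of the commuted third derivatives. I would check this cancellation first. The contributions that survive come from the Ricci identities \eqref{ric1}--\eqref{ric2} used to commute indices: they are multiples of $\tS\,\tS_k$ from the Gaussian curvature term $\frac12(2c-\tS)$ and from the normal curvature \eqref{cur2}. Most likely one gets $\langle u,\ta\rangle$, $\langle u,\tb\rangle$ etc.\ equal to $\lambda\tS\tS_1$ or $\lambda\tS\tS_2$ for a universal constant $\lambda$. Any remaining $c\,\tS_k$ pieces should cancel between the $c$ terms of $\frac12(2c-\tS)$ and the Laplacian $\tri\ta=\frac12(4c-3\tS)\ta$.

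Summing the Cauchy--Schwarz bounds then gives
\begin{equation*}
\C_2+\C_3\ge\frac{8}{\tS}\,\kappa\,\tS^2|\nabla\tS|^2
\end{equation*}
for some numerical $\kappa$, and the claim follows once $8\kappa=\frac98$. The main obstacle is the bookkeeping in the differentiation step, namely tracking the index order of $\ta_{ijk}$ and applying \eqref{ric2} with the normal curvature correctly. If the $\tS$-derivative terms do not fully cancel, I would instead choose test vectors that also involve $\ta_1$ and $\ta_2$, which are orthogonal with common length squared $\frac18\B_1$, and optimize a weighted combination.
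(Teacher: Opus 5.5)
\textbf{The gap.} The reduction to projections onto $\ta,\tb$ is sound: Bessel's inequality applies to the orthogonal pair $\ta,\tb$, each of length $\frac12\sqrt{\tS}$. But the step everything else rests on is false. The third-order terms of $\tS$ do \emph{not} cancel in $u,v,w,z$ individually.

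Carry out the differentiation you describe. Eliminate $\langle\ta_2,\ta_{12}\rangle$ using $\ta_{12}=\ta_{21}+(\frac32\tS-2c)\tb$, and eliminate $(\B_1)_1$ using $(\tri\tS)_1=2(\B_1)_1-(6\tS-4c)\tS_1$. You then get
\[
\langle u,\ta\rangle=-\tfrac{3}{16}\tS\tS_1+\tfrac{1}{16}F,\qquad \langle z,\ta\rangle=-\tfrac{3}{16}\tS\tS_1-\tfrac{1}{16}F,\qquad F=\tS_{111}-3\tS_{122}+\left(c-\tfrac12\tS\right)\tS_1,
\]
and analogous formulas for the other six inner products.

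Here $F=8\operatorname{Re}\nabla^3\tS(\epsilon,\epsilon,\epsilon)$, with $\epsilon=\frac12(e_1-\sqrt{-1}e_2)$. This is the $(3,0)$-part of $\nabla^3\tS$, a genuinely third-order quantity not controlled by $\tS$ and $\nabla\tS$. So there is no identity $\langle X,\ta\rangle=\lambda\tS\tS_k$ for the individual vectors. Consequently your bound $\C_2+\C_3\ge\frac{8}{\tS}\kappa\tS^2|\nabla\tS|^2$ does not follow as stated.

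Your heuristic is in fact reversed. The trace-free, unconstrained parts of $\nabla^3\wt h$ are $u-z$ and $w+v$; they are the real and imaginary parts of $\nabla^3\wt h(\epsilon,\dots,\epsilon)$. The combinations $u+z$ and $w-v$ are the trace parts. Your fallback of testing against $\ta_1,\ta_2$ would not remove $F$ either.

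\textbf{The missing idea.} Pair $u$ with $z$, and $v$ with $w$. Codazzi gives $\tri\tb=\ta_{21}-\ta_{12}$, so
\[
u+z=(\tri\ta)_1+(\tri\tb)_2,\qquad w-v=(\tri\ta)_2-(\tri\tb)_1 .
\]
Differentiating Corollary~\ref{coro1gap}\eqref{lapab} once then gives exactly $u+z=-\frac32(\ta\tS_1+\tb\tS_2)$ and $w-v=\frac32(\tb\tS_1-\ta\tS_2)$. Hence $|u+z|^2+|w-v|^2=\frac98\tS|\nabla\tS|^2$, and the parallelogram law yields
\[
\C_2+\C_3=|u+z|^2+|u-z|^2+|w-v|^2+|w+v|^2\ge\frac98\tS|\nabla\tS|^2 .
\]
This is the paper's proof. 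It needs neither the third-order differentiation of Lemma~\ref{eq:pmc-s3-a1-a2-relations} nor any projection.

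Your scheme can be rescued only by inserting this same pairing. The $\pm F$ terms drop out of $\langle u,\ta\rangle^2+\langle z,\ta\rangle^2\ge\frac12\langle u+z,\ta\rangle^2$. Since $u+z$ and $w-v$ lie in the span of $\ta,\tb$, nothing is lost in the projection, and the constant $\frac98$ is recovered.
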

\begin{proof}
	Define
	\begin{equation*}
		\begin{aligned}
			\tE_1&=\ta_{111}-\ta_{122}+\ta_{212}+\ta_{221},\\
			\tE_2&=\ta_{112}+\ta_{121}+\ta_{222}-\ta_{211}.
		\end{aligned}
	\end{equation*}
	By Corollary \ref{coro1gap} \eqref{lapab}, we have
	\begin{align*}
		(\ta_{11}+\ta_{22})_1&=\frac12(4c-3\tS)\ta_1-\frac32\ta\tS_1,\\
		(\ta_{11}+\ta_{22})_2&=\frac12(4c-3\tS)\ta_2-\frac32\ta\tS_2,\\
		(\ta_{21}-\ta_{12})_1&=\frac12(4c-3\tS)\ta_2-\frac32\tb\tS_1,\\
		(\ta_{21}-\ta_{12})_2&=-\frac12(4c-3\tS)\ta_1-\frac32\tb\tS_2.
	\end{align*}
	Hence
	\begin{equation*}
		\begin{aligned}
			\tE_1&=(\ta_{11}+\ta_{22})_1+(\ta_{21}-\ta_{12})_2=-\frac32(\ta\tS_1+\tb\tS_2),\\
			\tE_2&=(\ta_{11}+\ta_{22})_2-(\ta_{21}-\ta_{12})_1=\frac32(\tb\tS_1-\ta\tS_2).
		\end{aligned}
	\end{equation*}
	Therefore, we get
	\begin{equation*}
		|\tE_1|^2+|\tE_2|^2=\frac98\tS|\nabla\tS|^2.
	\end{equation*}
	Combining Cauchy's inequality, we obtain
	\begin{equation*}
		\C_2+\C_3\ge|\tE_1|^2+|\tE_2|^2\ge\frac98\tS|\nabla\tS|^2,
	\end{equation*}
	which proves the lemma.
\end{proof}

Combining Theorem \ref{3rdint} and Lemma \ref{infc2c3}, we obtain the following theorem.
\begin{theorem}\label{new3rdint}
	Let $M$ be a closed surface immersed in $\Sn$ with parallel mean curvature vector and positive Gaussian curvature. Then we have
	\begin{equation*}
		\begin{aligned}
			\int_M\tS(3\tS-4c)(3\tS-5c)(5\tS-9c)\ge\int_M\left[\frac34(25\tS-42c)|\nabla\tS|^2-\frac54(\tri\tS)^2\right].
		\end{aligned}
	\end{equation*}
\end{theorem}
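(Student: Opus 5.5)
The plan is to substitute the pointwise lower bound of Lemma \ref{infc2c3} into the first form of the identity in Theorem \ref{3rdint}, and then simply collect the $|\nabla\tS|^2$ terms. The first equality in Theorem \ref{3rdint} gives
\begin{equation*}
	\int_M\tS(3\tS-4c)(3\tS-5c)(5\tS-9c)=\int_M\left[\frac32(11\tS-21c)|\nabla\tS|^2-\frac54(\tri\tS)^2+2(\C_2+\C_3)\right].
\end{equation*}
The quantities $\C_2$ and $\C_3$ enter with the positive coefficient $2$. Lemma \ref{infc2c3} gives the pointwise bound $\C_2+\C_3\ge\frac98\tS|\nabla\tS|^2$. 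Inserting it yields
\begin{equation*}
	\int_M\tS(3\tS-4c)(3\tS-5c)(5\tS-9c)\ge\int_M\left[\left(\frac32(11\tS-21c)+\frac94\tS\right)|\nabla\tS|^2-\frac54(\tri\tS)^2\right].
\end{equation*}

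The remaining step is arithmetic on the coefficient of $|\nabla\tS|^2$:
\begin{equation*}
	\frac{33}{2}\tS-\frac{63}{2}c+\frac94\tS=\frac{75}{4}\tS-\frac{126}{4}c=\frac34(25\tS-42c).
\end{equation*}
This is exactly the coefficient in the claimed inequality, which completes the argument.

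There is no real obstacle, since both ingredients are already established. The only points to check are that the coefficient $2$ in front of $\C_2+\C_3$ is positive, so the direction of the inequality is preserved, and that the positive Gaussian curvature hypothesis is the one required by both Theorem \ref{3rdint} and Lemma \ref{infc2c3}; both conditions hold.
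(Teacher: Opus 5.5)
Your proposal is correct and follows the paper's own argument: the paper obtains this theorem by inserting the bound $\C_2+\C_3\ge\frac98\tS|\nabla\tS|^2$ from Lemma \ref{infc2c3} into the first form of the identity in Theorem \ref{3rdint}. Your coefficient computation $\frac32(11\tS-21c)+\frac94\tS=\frac34(25\tS-42c)$ is right.
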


\subsection{The parameter estimate}
Set $\tS_{\min}=\inf_M\tS\text{ and }\tS_{\max}=\sup_M\tS$. From
\begin{lemma}\label{estimatelem}
	Let $M$ be a closed surface immersed in $\Sn$ with parallel mean curvature vector and positive Gaussian curvature. Then we have
	\begin{equation*}
		\begin{aligned}
			&\hspace{1.3em}\int_M\tS(3\tS-4c)(3\tS-5c)(5\tS-9c)\\
			&\ge\int_M\left[
			-\frac{5(w-\tS)^2}{16t(1-t)}
			\left(\frac{2+15t}{2}(w+\tS)+\frac{36}{5}c
			 -2\tS_{\max}-\frac{126}{5}ct\right)^2-\frac{5\tS(\tS_{\max}-\tS)(3\tS-4c)(3\tS-5c)}{2(1-t)}\right],
		\end{aligned}
	\end{equation*}
	for all $0<t<1$ and $w\in\mathbb{R}$.
\end{lemma}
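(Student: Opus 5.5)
The plan is to start from Theorem \ref{new3rdint}, whose right-hand side is $\int_M\bigl[\frac34(25\tS-42c)|\nabla\tS|^2-\frac54(\tri\tS)^2\bigr]$, and bound it from below pointwise. To do this I will rewrite the gradient terms as expressions linear in $\tri\tS$, and then complete a square in $\tri\tS$. The free parameters $t\in(0,1)$ and $w\in\mathbb R$ will enter, respectively, through a convex splitting of $-\frac54(\tri\tS)^2$ and through adding integrals that vanish.

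\emph{Step 1 (integration by parts).} For any polynomial $f$ with antiderivative $F$, we have $\int_M f(\tS)|\nabla\tS|^2=-\int_M F(\tS)\tri\tS$. In particular,
\[
\int_M|\nabla\tS|^2=\int_M(\tS_{\max}-\tS)\tri\tS ,
\]
and adding a multiple of $\int_M\tri\tS=0$ changes nothing. Using these, I will turn $\int_M\frac34(25\tS-42c)|\nabla\tS|^2$ into $\int_M P(\tS)\,\tri\tS$, where $P$ is a quadratic polynomial. The right-hand side then reads $\int_M\bigl[-\frac54 X^2+P(\tS)X\bigr]$ with $X=\tri\tS$. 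I will arrange the constants so that $P(\tS)$ carries a factor of the form $(w-\tS)$ times a linear expression in $\tS$, $w$, $\tS_{\max}$, and $c$. The $w$-dependence comes from adding $\int_M\lambda(w)\tri\tS=0$ with suitable $\lambda$. The target is the affine factor $\frac{2+15t}{2}(w+\tS)+\frac{36}{5}c-2\tS_{\max}-\frac{126}{5}ct$ that appears in the statement.

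\emph{Step 2 (splitting the square).} I will write $-\frac54X^2=-\frac54tX^2-\frac54(1-t)X^2$. The first part is combined with the linear term by Young's inequality, $-\frac54tX^2+PX\ge-\frac{P^2}{5t}$. This is where the term $-\frac{5(w-\tS)^2}{16t(1-t)}(\cdots)^2$ should come from, once the normalizations are tracked. The second part, $-\frac54(1-t)X^2$, has to be controlled by something positive.

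\emph{Step 3.} For the remaining $(1-t)(\tri\tS)^2$ term I will use Theorem \ref{1stlap}, $\tri\tS=2\B_1-\tS(3\tS-4c)$, together with Theorem \ref{2ndint} multiplied by the nonnegative weight $(\tS_{\max}-\tS)$. Equivalently, I will integrate the pointwise relation for $\frac12\tri\B_1$ from Lemma \ref{eq:pmc-s3-half-delta-B1} against $(\tS_{\max}-\tS)$. This should produce an inequality of the form
\[
\int_M(\tri\tS)^2\ \lesssim\ \int_M\tS(\tS_{\max}-\tS)(3\tS-4c)(3\tS-5c)+(\text{gradient terms}),
\]
and the gradient terms are reabsorbed via Step 1. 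Dividing by $(1-t)$ then yields the last term $-\frac{5\tS(\tS_{\max}-\tS)(3\tS-4c)(3\tS-5c)}{2(1-t)}$.

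\emph{Main obstacle.} The real difficulty is the bookkeeping in Steps 1 and 3. The gradient contributions from the three sources (Theorem \ref{new3rdint}, the $(\tS_{\max}-\tS)$-weighted Simons identity, and the $\int_M\lambda\,\tri\tS$ adjustment) must cancel exactly, so that only a perfect square in $\tri\tS$ remains. I would first fix the coefficient of $(\tri\tS)^2$ in the weighted Simons identity. Then I would solve for the polynomial $P$ and for $\lambda(w)$ by matching coefficients of $\tS^k$, and finally check that the completed square reproduces the stated affine factor.
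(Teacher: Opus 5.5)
Your Step~2 is wrong: the inequality $-\frac54tX^2+PX\ge-\frac{P^2}{5t}$ is false. The map $X\mapsto-\frac54tX^2+PX$ is a concave quadratic, so it is bounded \emph{above} by $\frac{P^2}{5t}$ and is unbounded below. In Theorem \ref{new3rdint}, $(\tri\tS)^2$ enters with a negative coefficient, so no pointwise completion of the square can give a lower bound. You need an \emph{upper} bound for $\int_M(\tri\tS)^2$ together with an absorption argument, and your ordering of the steps does not allow this. Concretely, there are three problems with your scheme:
\begin{itemize}
\item Step~3 is applied only to $-\frac54(1-t)\int_M(\tri\tS)^2$. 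This puts a factor $(1-t)$, not $\frac1{1-t}$, in front of $\tS(\tS_{\max}-\tS)(3\tS-4c)(3\tS-5c)$, so your ``dividing by $(1-t)$'' has no source.
\item Step~3 leaves new $\tS_{\max}$-dependent gradient terms. Turning these into terms linear in $\tri\tS$ would need another Young inequality, and the $(\tri\tS)^2$ budget has already been spent.
\item The affine factor in the statement depends on $t$ and $\tS_{\max}$. It therefore cannot come from the gradient term $\frac34(25\tS-42c)$ of Theorem \ref{new3rdint} alone, as Step~1 claims. The gradient terms are not meant to cancel; they are combined into one term linear in $\tri\tS$.
\end{itemize}

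The paper's route is as follows. Write the right side of Theorem \ref{new3rdint} as $-\frac54\int_M\left[(\tri\tS)^2+\frac35(42c-25\tS)|\nabla\tS|^2\right]$ and bound the bracket from above.
\begin{itemize}
\item Your Step~3 ingredients (Theorem \ref{1stlap}, the $\frac12\tri\B_1$ formula multiplied by $\tS$, and $\B_2=\frac14\tS(3\tS-4c)^2+\C_1$) give the identity $\int_M(\tri\tS)^2=\int_M\left[(17\tS-18c)|\nabla\tS|^2+4\tS\C_1-2\tS^2(3\tS-4c)(3\tS-5c)\right]$.
\item The inequality comes from the pointwise facts $\C_1\ge0$ and $\tS\le\tS_{\max}$, which give $\int_M\tS\C_1\le\tS_{\max}\int_M\C_1$. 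Then use the second identity for $\int_M\C_1$. This is not ``Theorem \ref{2ndint} times a weight'', since that theorem is an integral identity.
\item Add $(1-t)$ times the gradient term. The total gradient coefficient is $(2+15t)\tS+\frac{36}5c-2\tS_{\max}-\frac{126}5ct$.
\item Integrate by parts to write this as $\int_M(w-\tS)Q\,\tri\tS$, with $Q$ the affine factor of the statement.
\item Cauchy--Schwarz and Young now act in the correct direction: $\int_M(w-\tS)Q\,\tri\tS\le\frac1{4t}\int_M(w-\tS)^2Q^2+t\int_M(\tri\tS)^2$.
\item Absorb $t\int_M(\tri\tS)^2$ into the left side. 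This leaves $(1-t)\int_M\left[(\tri\tS)^2+\frac35(42c-25\tS)|\nabla\tS|^2\right]$ on the left.
\item Divide by $1-t$ and multiply by $-\frac54$. This gives both terms of the lemma with their common factor $\frac1{1-t}$.
\end{itemize}
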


\begin{proof}
	First we have
	\begin{equation*}
		\begin{aligned}
			\int_M(\tri\tS)^2&=2\int_M\B_1\tri\tS-\int_M\tS(3\tS-4c)\tri\tS\\
			&=2\int_M\B_1\tri\tS+\int_M(6\tS-4c)|\nabla\tS|^2.
		\end{aligned}
	\end{equation*}
	Using \eqref{eq:pmc-s3-half-delta-B1}, integration by parts gives
	\begin{equation*}
		\begin{aligned}
			\frac12\int_M\B_1\tri\tS&=\frac12\int_M\tS\tri\B_1\\
			&=\int_M\left[\left(\frac{11}{4}\tS-\frac72c\right)|\nabla \tS|^2-\frac14\tS^2(3\tS-4c)(9\tS-14c)+\tS\B_2\right].
		\end{aligned}
	\end{equation*}
	The second Simons-type identity gives
	\begin{equation*}
		\int_M\C_1=\int_M\left[\frac12\tS(3\tS-4c)(3\tS-5c)-\frac12|\nabla\tS|^2\right].
	\end{equation*}
	Hence we get
	\begin{equation*}
		\int_M(\tri\tS)^2=\int_M\left[(17\tS-18c)|\nabla\tS|^2+4\tS\C_1-2\tS^2(3\tS-4c)(3\tS-5c)\right].
	\end{equation*}
	Thus, for any $0<t<1$,
	\begin{equation*}
		\begin{aligned}
			&\int_M\left[(\tri\tS)^2+\frac{3(1-t)}5(42c-25\tS)|\nabla\tS|^2\right]\\
			&\le\int_M\left[\left((2+15t)\tS+\frac{36}{5}c-2\tS_{\max}-\frac{126}{5}ct\right)|\nabla\tS|^2+2\tS(\tS_{\max}-\tS)(3\tS-4c)(3\tS-5c)\right]\\
			&=\int_M\left[(w-\tS)\left(\frac{2+15t}{2}(w+\tS)+\frac{36}{5}c-2\tS_{\max}-\frac{126}{5}ct\right)\tri\tS+2\tS(\tS_{\max}-\tS)(3\tS-4c)(3\tS-5c)\right].
		\end{aligned}
	\end{equation*}
	Thus, by Cauchy-Schwarz's inequality and Young's inequality we have
    \begin{equation*}
		\begin{aligned}
			&\hspace{1.3em}\int_M\left[(\tri\tS)^2+\frac{3(1-t)}{5}(42c-25\tS)|\nabla\tS|^2\right]\\
			&\le\left[\int_M(w-\tS)^2\left(\frac{2+15t}{2}(w+\tS)+\frac{36}{5}c-2\tS_{\max}-\frac{126}{5}ct\right)^2\right]^{\frac{1}{2}}\left[\int_M(\tri\tS)^2\right]^{\frac{1}{2}}\\
			&\hspace{3em}+\int_M2\tS(\tS_{\max}-\tS)(3\tS-4c)(3\tS-5c)\\
			&\le\frac{1}{4t}\int_M(w-\tS)^2\left(\frac{2+15t}{2}(w+\tS)+\frac{36}{5}c-2\tS_{\max}-\frac{126}{5}ct\right)^2+t\int_M(\tri\tS)^2\\
			&\hspace{3em}+\int_M2\tS(\tS_{\max}-\tS)(3\tS-4c)(3\tS-5c),
		\end{aligned}
	\end{equation*}
	which yields that, for $0<t<1$,
	\begin{equation*}
		\begin{aligned}
			&\hspace{1.3em}\int_M\left[(\tri\tS)^2+\frac35(42c-25\tS)|\nabla\tS|^2\right]\\
			&\le\frac{1}{4t(1-t)}\int_M(w-\tS)^2\left[\frac{2+15t}{2}(w+\tS)+\frac{36}{5}c-2\tS_{\max}-\frac{126}{5}ct\right]^2\\
			&\hspace{1.3em}+\frac{2}{1-t}\int_M\tS(\tS_{\max}-\tS)(3\tS-4c)(3\tS-5c).
		\end{aligned}
	\end{equation*}
	Combining \eqref{new3rdint} yields that, for all $0<t<1$ and $w\in\R$,
	\begin{equation*}
		\begin{aligned}
			&\hspace{1.3em}\int_M\tS(3\tS-4c)(3\tS-5c)(5\tS-9c)\\
			&\ge\int_M\left[-\frac{5(w-\tS)^2}{16t(1-t)}\left(\frac{2+15t}{2}(w+\tS)+\frac{36}{5}c-2\tS_{\max}-\frac{126}{5}ct\right)^2-\frac{5\tS(\tS_{\max}-\tS)(3\tS-4c)(3\tS-5c)}{2(1-t)}\right],
		\end{aligned}
	\end{equation*}
	which proves the lemma.
\end{proof}

\subsection{Endpoint gaps and oscillation estimate in the third interval}
We now assume
\begin{equation*}
	\frac53c\le\tS\le\frac95c.
\end{equation*}
Then $\tS<2c$ and $K>0$.

\begin{theorem}[Left endpoint gap]\label{s3-left-gap}
Let $M$ be a closed surface immersed in $\mathbb S^N$ with parallel mean curvature vector and positive Gaussian curvature. If $$\frac53c\le\tS<cS_-,$$ then $\tS\equiv\frac53c$.
\end{theorem}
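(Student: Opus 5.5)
We work with the normalized quantity $x=\tS/c\in[\frac53,\frac95]$ and $x_{\max}=\tS_{\max}/c$. Since $\frac53c\le\tS<cS_-<\frac95c<2c$, we have $K>0$, so Lemma~\ref{estimatelem} applies. Rearranged, it says that for every $0<t<1$ and $w\in\mathbb R$,
\begin{equation*}
	\int_M F_{t,w}(\tS)\ge 0,
\end{equation*}
where
\begin{equation*}
	F_{t,w}(\tS)=-\tS(3\tS-4c)(3\tS-5c)(5\tS-9c)-\frac{5(w-\tS)^2}{16t(1-t)}\Big(\tfrac{2+15t}{2}(w+\tS)+\tfrac{36}{5}c-2\tS_{\max}-\tfrac{126}{5}ct\Big)^2-\frac{5\tS(\tS_{\max}-\tS)(3\tS-4c)(3\tS-5c)}{2(1-t)}.
\end{equation*}
The plan is to choose $t$ and $w$ so that $F_{t,w}(\tS)\le0$ pointwise on $[\frac53c,cS_-)$, with equality only at $\tS=\frac53c$. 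Then $\int_M F_{t,w}\ge0$ forces $F_{t,w}(\tS)\equiv0$, hence $\tS\equiv\frac53c$. The geometric identification with Calabi's $2$-sphere follows afterwards from Proposition~\ref{yauchara}.

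\textbf{Choice of $w$.} I take $w=\frac53c$, so that every term of $F_{t,w}$ carries the factor $(3\tS-5c)$, and the middle term even carries $(3\tS-5c)^2$. Factoring out $(3\tS-5c)\ge0$ gives
\begin{equation*}
	F_{t,w}=(3\tS-5c)\,P_t(\tS),
\end{equation*}
where
\begin{equation*}
	P_t(\tS)=\tS(3\tS-4c)\Big[(9c-5\tS)-\frac{5(\tS_{\max}-\tS)}{2(1-t)}\Big]-\frac{5(3\tS-5c)}{144\,t(1-t)}\big(\cdots\big)^2 .
\end{equation*}
The first bracket is the only part that can be positive. It is largest where $\tS_{\max}-\tS$ is small, that is, near $\tS_{\max}$.

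\textbf{Sign analysis.} I bound $\tS_{\max}$ above by $cS_-$ using the hypothesis. The aim is to show that for a suitable $t$ (depending only on $x$, or fixed), $P_t(\tS)<0$ for all $\frac53c<\tS\le\tS_{\max}<cS_-$, while $P_t$ stays finite at $\tS=\frac53c$. The second term of $P_t$ is nonpositive and vanishes only at $\frac53c$. The difficulty is the first bracket near $\tS=\frac53c$. There $(9c-5\tS)\approx\frac{2}{3}c$, and this must be dominated by $\frac{5(\tS_{\max}-\frac53c)}{2(1-t)}$, because otherwise $P_t(\frac53c)>0$ and the argument fails. Consequently $t$ has to be taken close to $1$ when $\tS_{\max}$ is close to $\frac53c$, while the Young's-inequality term blows up like $1/(1-t)$. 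I would treat the worst case $\tS=\tS_{\max}$ together with the endpoint behaviour, reduce the condition ``there is $t\in(0,1)$ with $P_t\le0$ on the interval'' to a polynomial inequality in $x_{\max}$ by optimizing over $t$, and check that this inequality holds exactly when $\Theta_-(x_{\max})>0$, i.e.\ $x_{\max}<S_-$.

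\textbf{Main obstacle.} The hardest step is the optimization over $t$ that produces the cubic $\Theta_-=7965x^3-10935x^2-13509x+15295$. First I would fix $t$ by balancing the two negative contributions, then verify by elementary calculus (or by a Sturm/discriminant check) that the resulting polynomial in $(x,x_{\max})$ has the correct sign on the region $\frac53\le x\le x_{\max}<S_-$. If a single $t$ does not cover the whole region, I would let $t$ depend on $x_{\max}$, which is legitimate because $x_{\max}$ is a constant on $M$. I would also allow $w$ to be perturbed slightly from $\frac53c$ if that is needed to match the stated threshold.
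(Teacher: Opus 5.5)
Your rearrangement of Lemma~\ref{estimatelem} reverses the inequality, and the rest of the plan inherits the error. Write $X=\tS(3\tS-4c)(3\tS-5c)(5\tS-9c)$. Let $A\ge0$ be the Young term $\frac{5(w-\tS)^2}{16t(1-t)}(\cdots)^2$, and let $B=\frac{5\tS(\tS_{\max}-\tS)(3\tS-4c)(3\tS-5c)}{2(1-t)}\ge0$. The lemma says $\int_M X\ge-\int_M(A+B)$, that is, $\int_M(X+A+B)\ge0$. In your notation this is $\int_M F_{t,w}\le0$, not $\ge0$. You can test this on Calabi's sphere with $\tS\equiv\frac95c$: there $X=B=0<A$, so $\int_M F_{t,w}<0$.

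A contradiction therefore requires $F_{t,w}\ge0$ pointwise, strictly where $\tS>\frac53c$. Equivalently you need $P_t(\tS)>0$: the good term $\tS(3\tS-4c)(9c-5\tS)$ must dominate both error terms. Your target $P_t<0$ is what the lemma already permits, so it carries no information. It is also trivially attainable: as $t\to1$ both error terms blow up, and $P_t<0$ on all of $[\frac53c,\tS_{\max}]$ for every $\tS_{\max}>\frac53c$. This is why your ``main obstacle'' pushes $t\to1$ near $\frac53c$, and why no threshold like $S_-$ could ever come out of your scheme.

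With the correct sign, the difficulty sits at the other end. The paper keeps $0<t\le\frac12$, so that $5\tS-9c+\frac{5(\tS_{\max}-\tS)}{2(1-t)}\le5\tS_{\max}-9c$. With $w=\frac53c$, it bounds $\frac{(w-\tS)^2}{(3\tS-4c)(3\tS-5c)}$ and $g_t(\tS)=(\lambda_t\tS+\mu_t)^2/\tS$ by their values at $\tS_{\max}$; both are increasing there for $t\in[\frac49,\frac12]$. It then obtains a contradiction as soon as
$\Theta_t(\tS_{\max})=\tS_{\max}(3\tS_{\max}-4c)(5\tS_{\max}-9c)+\frac{5(3\tS_{\max}-5c)}{144t(1-t)}(\eta_1+\eta_2t)^2<0$
for some admissible $t$.

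\begin{itemize}
\item For $\tS_{\max}$ near $\frac53c$ this is easy: $t=\frac12$ suffices up to $\frac{127}{75}c$, and $\Theta_t\to-\frac{10}{9}c^3$.
\item The binding case is $\tS_{\max}$ near $cS_-$. There the minimizer $t=\tau=\eta_1/(2\eta_1+\eta_2)\in[\frac{53}{112},\frac12]$ of $(\eta_1+\eta_2t)^2/(t(1-t))$ gives $\Theta_\tau=\frac{c^3}{648}\Theta_-(\tS_{\max}/c)$, which is negative exactly when $\tS_{\max}<cS_-$.
\end{itemize}

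Note that $\Theta_-<0$ on $[\frac53,S_-)$; for example $\Theta_-(\frac53)=-720$. So your phrase ``$\Theta_-(x_{\max})>0$, i.e.\ $x_{\max}<S_-$'' also has the sign of the cubic backwards.

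Your choice $w=\frac53c$, the factor $(3\tS-5c)$, and letting $t$ depend on $\tS_{\max}$ do match the paper's skeleton. However, the sign analysis and the optimization over $t$ must be redone in the correct direction before the proposal proves anything.
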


\begin{proof}
Suppose that $\widetilde S\not\equiv\frac53c$. By Lemma \ref{estimatelem},
\begin{align*}
&\quad\int_M\tS(3\tS-4c)(3\tS-5c)\left(5\tS-9c+\frac{5(\tS_{\max}-\tS)}{2(1-t)}\right)\\
&\geq\int_M-\frac{5(w-\tS)^2}{16t(1-t)}\left[\frac{2+15t}{2}(w+\tS)+\frac{36}{5}c-2\tS_{\max}-\frac{126}{5}ct\right]^2.
\end{align*}
If $0<t\leq\frac12$, then
\begin{align*}
5\tS-9c+\frac{5(\tS_{\max}-\tS)}{2(1-t)}&=\frac{5\tS_{\max}}{2(1-t)}+\frac{5(1-2t)\tS}{2(1-t)}-9c\\
&\leq5\tS_{\max}-9c.
\end{align*}
For $$\frac53c\leq w\leq\tS\leq\frac95c,$$ using $\tS_{\max}>\frac53c$, we have
\begin{align*}
\frac{(w-\tS)^2}{(3\tS-4c)(3\tS-5c)}&=\frac19\left(\frac{4c-3w}{3\tS-4c}+1\right)\left(\frac{5c-3w}{3\tS-5c}+1\right)&\leq\frac{(w-\tS_{\max})^2}{(3\tS_{\max}-4c)(3\tS_{\max}-5c)}.
\end{align*}
Consequently,
\begin{equation}\label{eq:pmc-third-pointwise}
	\int_M\tS(3\tS-4c)(3\tS-5c)\left[5\tS_{\max}-9c+\frac{5(w-\tS_{\max})^2\left(\frac{2+15t}{2}(w+\tS)+\frac{36}{5}c-2\tS_{\max}-\frac{126}{5}ct\right)^2}{16t(1-t)\tS(3\tS_{\max}-4c)(3\tS_{\max}-5c)}\right]\geq0.
\end{equation}
We now optimize the parameter $t$ in \eqref{eq:pmc-third-pointwise} with $w=\frac53c$. Define $$\lambda_t\coloneqq1+\frac{15}{2}t,\quad\mu_t\coloneqq\frac{133}{15}c-2\tS_{\max}-\frac{127}{10}ct,$$ and $$g_t(S)\coloneqq\frac{(\lambda_tS+\mu_t)^2}{S}.$$ Then \eqref{eq:pmc-third-pointwise} becomes $$\int_M\tS(3\tS-4c)(3\tS-5c)\left[5\tS_{\max}-9c+\frac{K_{\tS_{\max}}}{t(1-t)}g_t(\tS)\right]\geq0,$$ where $$K_{\tS_{\max}}\coloneqq\frac{5(3\tS_{\max}-5c)^2}{144(3\tS_{\max}-4c)(3\tS_{\max}-5c)}>0.$$ We first consider $\frac49\leq t\leq\frac12$. For $\frac53c\leq S\leq\tS_{\max}\leq\frac95c$, we have $\mu_t<0$ and $\lambda_tS+\mu_t>0$. Therefore $$g_t'(S)=\frac{(\lambda_tS+\mu_t)(\lambda_tS-\mu_t)}{S^2}>0.$$ Thus $g_t$ is increasing on $\left[\frac53c,\tS_{\max}\right]$, and hence
\begin{equation}\label{eqKSmax}
	5\tS_{\max}-9c+\frac{K_{\tS_{\max}}}{t(1-t)}g_t(\tS_{\max})\geq0.
\end{equation}
Put $\eta_1\coloneqq\frac{133}{15}c-\tS_{\max}$ and $\eta_2\coloneqq\frac{15}{2}\tS_{\max}-\frac{127}{10}c$.
Since $$\lambda_t\tS_{\max}+\mu_t=\eta_1+\eta_2t,$$ we obtain
\begin{equation*}\Theta_t(\tS_{\max})\coloneqq\tS_{\max}(3\tS_{\max}-4c)(5\tS_{\max}-9c)+\frac{5(3\tS_{\max}-5c)}{144t(1-t)}(\eta_1+\eta_2t)^2\geq0.
\end{equation*}
For fixed $\tS_{\max}$, it remains to minimize $$\phi(t)\coloneqq\frac{(\eta_1+\eta_2t)^2}{t(1-t)}.$$ Since $\eta_1+\eta_2t>0$ for $\frac53c\leq\widetilde S_{\max}\leq\frac95c$ and $0<t\leq\frac12$, a direct calculation gives $$\phi'(t)=\frac{(\eta_1+\eta_2t)\left((2\eta_1+\eta_2)t-\eta_1\right)}{t^2(1-t)^2}.$$ The unique critical point is $$\tau\coloneqq\frac{\eta_1}{2\eta_1+\eta_2}=\frac{2(133c-15\tS_{\max})}{151c+165\tS_{\max}}.$$ It follows that the minimum of $\phi$ on $(0,\frac12]$ is attained at
\begin{equation*}
t_{\tS_{\max}}
\coloneqq
\begin{cases}
\dfrac12,
&\dfrac53c<\tS_{\max}\leq\dfrac{127}{75}c,\\[6pt]
\dfrac{2(133c-15\tS_{\max})}{151c+165\tS_{\max}},
&\dfrac{127}{75}c<\tS_{\max}\leq\dfrac95c.
\end{cases}
\end{equation*}
Moreover, $\frac{53}{112}\leq t_{\tS_{\max}}\le\frac12$, and hence $t_{\tS_{\max}}\in\left[\frac49,\frac12\right]$. It remains to show that the range $0<t<\frac49$ cannot yield a better estimate. Since $$g_t''(s)=\frac{2\mu_t^2}{s^3}\geq0,$$ the function $g_t$ is convex, and hence its maximum on $\left[\frac53c,\tS_{\max}\right]$ is attained at an endpoint. For $0<t\leq\frac12$, define $$\mathcal E(t)\coloneqq\frac1{t(1-t)}\max_{\frac53c\leq S\leq\tS_{\max}}g_t(S).$$ Since $g_{t_{\tS_{\max}}}$ is increasing, we have $$\mathcal E(t_{\tS_{\max}})=\frac{\phi(t_{\tS_{\max}})}{\tS_{\max}}.$$ On the other hand, since $\frac{53}{112}>\frac49$, every $0<t<\frac49$ lies to the left of $t_{\tS_{\max}}$. As $\phi$ is decreasing on $(0,t_{\tS_{\max}}]$, we have $$\mathcal E(t)\ge\frac{\phi(t)}{\tS_{\max}}>\frac{\phi(t_{\tS_{\max}})}{\tS_{\max}}=\mathcal E(t_{\tS_{\max}}).$$ Thus the choice $t=t_{\tS_{\max}}$ is optimal among all $0<t\leq\frac12$ in the pointwise reduction \eqref{eq:pmc-third-pointwise}. If $\frac53c<\tS_{\max}\leq\frac{127}{75}c$, then $t_{\tS_{\max}}=\frac12$ and
\begin{equation*}
\Theta_{\frac12}(\tS_{\max})=\tS_{\max}(3\tS_{\max}-4c)(5\tS_{\max}-9c)+\frac5{36}(3\tS_{\max}-5c)\left(\frac{11}{4}\tS_{\max}+\frac{151}{60}c\right)^2.
\end{equation*}
A direct calculation gives $$\Theta_{\frac12}(\tS_{\max})<0$$ on this interval, which is a contradiction. If $\frac{127}{75}c<\tS_{\max}\leq\frac95c$, then $t_{\tS_{\max}}=\tau$. By the definition of $\tau$ we have $$\frac{(\eta_1+\eta_2\tau)^2}{\tau(1-\tau)}=4\eta_1(\eta_1+\eta_2).$$ Define $$\Theta_-(x)\coloneqq7965x^3-10935x^2-13509x+15295.$$ The polynomial $\Theta_-$ has the unique root $S_-$ in $\left(\frac53,\frac95\right)$, and a direct calculation gives $$\Theta_\tau(\tS_{\max})=\frac{c^3}{648}\Theta_-\left(\frac{\tS_{\max}}{c}\right).$$ If $\frac{127}{75}c<\tS_{\max}<cS_-$, then $\Theta_\tau(\tS_{\max})<0$, again a contradiction with \eqref{eqKSmax}. Consequently, the assumption $\tS\not\equiv\frac53c$ is impossible. Hence $\tS\equiv\frac53c$, which completes the proof.
\end{proof}

\begin{theorem}[Interior oscillation estimate]\label{s3-interior}
Let $M$ be a closed surface immersed in $\mathbb S^N$ with parallel mean curvature vector and positive Gaussian curvature. If $\frac53c\leq\tS\leq\frac95c$ and $\tS\not\equiv\frac53c$, then
\begin{equation*}
	\tS_{\max}-\tS_{\min}\ge cG\left(\frac{\tS_{\min}}{c}\right)=\frac{120\tS_{\min}(3\tS_{\min}-4c)(9c-5\tS_{\min})}{2225\tS_{\min}^2-420c\tS_{\min}-1701c^2}.
\end{equation*}
\end{theorem}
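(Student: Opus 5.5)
The plan is to argue by contradiction from Lemma \ref{estimatelem}, in the same spirit as the proof of Theorem \ref{s3-left-gap}. This time the free constant $w$ is chosen to be $\tS_{\min}$, and the estimate is kept linear in the oscillation $\delta\coloneqq\tS_{\max}-\tS_{\min}$ rather than passing to $\tS_{\max}$ alone. Since $\frac53c\le\tS\le\frac95c<2c$, the Gaussian curvature is positive, so Lemma \ref{estimatelem} and Theorem \ref{new3rdint} apply.

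\emph{Step 1: rearrange the lemma.} Moving the second term of Lemma \ref{estimatelem} to the left gives
\begin{equation*}
\int_M\tS(3\tS-4c)(3\tS-5c)\Big(5\tS-9c+\frac{5(\tS_{\max}-\tS)}{2(1-t)}\Big)+\int_M\frac{5(w-\tS)^2}{16t(1-t)}Q_t(w,\tS)^2\ge0,
\end{equation*}
where $Q_t(w,\tS)=\frac{2+15t}{2}(w+\tS)+\frac{36}{5}c-2\tS_{\max}-\frac{126}{5}ct$.

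\emph{Step 2: extract a common factor $3\tS-5c$.} Put $w=\tS_{\min}$. Since $\tS_{\min}\ge\frac53c$, we have pointwise
\begin{equation*}
(\tS-\tS_{\min})^2\le\delta(\tS-\tS_{\min})\le\frac{\delta}{3}(3\tS-5c).
\end{equation*}
Also $\tS_{\max}-\tS\le\delta$. Hence the whole integrand is bounded above by $(3\tS-5c)\,\Phi_t(\tS)$, where
\begin{equation*}
\Phi_t(\tS)=\tS(3\tS-4c)\Big(5\tS-9c+\frac{5\delta}{2(1-t)}\Big)+\frac{5\delta}{48t(1-t)}Q_t(\tS_{\min},\tS)^2.
\end{equation*}
Because $\tS\not\equiv\frac53c$, the factor $3\tS-5c\ge0$ is positive on an open set. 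Therefore $\Phi_t(\tS(p))\ge0$ at some point $p$ with $\tS(p)>\frac53c$. At that point $\tS$ lies in $[\tS_{\min},\tS_{\min}+\delta]$.

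\emph{Step 3: reduce to $\tS_{\min}$.} I would bound $\Phi_t$ from above on this range by a quantity depending only on $x=\tS_{\min}/c$ and on $\delta$, keeping only first-order terms in $\delta$ (legitimate because the target bound $G$ is small on $[\frac53,\frac95]$).
\begin{itemize}
\item The negative main term $\tS(3\tS-4c)(5\tS-9c)$ is handled by monotonicity in $\tS$. It is $\le \tS_{\min}(3\tS_{\min}-4c)(5\tS_{\min}-9c)+O(\delta)$, with an explicit linear correction.
\item The bracket $Q_t^2$ is replaced by its value at $\tS=\tS_{\min}$, $\tS_{\max}=\tS_{\min}$ plus $O(\delta)$.
\end{itemize}
This yields an inequality of the form
\begin{equation*}
\delta\,L_t(x)\,c^2\ge x(3x-4)(9-5x)\,c^3,
\end{equation*}
with $L_t$ quadratic in $x$.

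\emph{Step 4: optimize $t$.} Minimizing $L_t(x)$ over $t\in(0,1)$, or choosing an explicit good $t$ such as $t=\frac12$ or a $t$ of the form $\tau$ as in Theorem \ref{s3-left-gap}, should produce the denominator $\frac{1}{120}(2225x^2-420x-1701)$. This gives $\delta\ge cG(x)$.

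The main obstacle is Steps 3–4. The bookkeeping must be tight enough that the optimized coefficient is exactly $\frac{2225x^2-420x-1701}{120}$, and the neglected higher-order terms in $\delta$ must have the right sign. If a pure linearization loses too much, I would instead keep the exact quadratic-in-$\delta$ inequality and check that its relevant root dominates $cG(x)$ on $[\frac53,\frac95]$. I would also check that $2225x^2-420x-1701>0$ on $[\frac53,\frac95]$, so that $G$ is well defined and positive there.
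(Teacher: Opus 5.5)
There is a genuine gap, and it is already in Step~2, not only in the unexecuted Steps~3--4.

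\textbf{Step 2 loses too much.} You replace $\tS_{\max}-\tS$ by $\delta$ in the factor $5\tS-9c+\frac{5(\tS_{\max}-\tS)}{2(1-t)}$. At $\tS=\tS_{\max}$ the true term vanishes, whereas yours equals $\frac{5\delta}{2(1-t)}\tS_{\max}(3\tS_{\max}-4c)$. That is exactly where the rest of your $\Phi_t$ is largest, since all its parts increase in $\tS$ on $[\frac53c,\frac95c]$. This loss is fatal for the constant $G$. Take $c=1$, $\tS_{\min}=1.7$ and $\delta=0.026<G(1.7)\approx0.0279$. Then $(1-t)\Phi_t(\tS_{\max})\approx-0.601+0.753\,t+0.139/t$, whose minimum over $t>0$ is about $0.047>0$. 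So for every $t\in(0,1)$, your conclusion ``$\Phi_t(\tS(p))\ge0$ for some $p$'' is compatible with an oscillation strictly below $cG(\tS_{\min}/c)$. No later bookkeeping or choice of $t$ can recover the stated bound; your chain yields only about $\delta\ge0.024$ there.

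\textbf{Step 3's linearization is not justified.} Smallness of $G$ does not license dropping higher-order terms. For instance, $s\mapsto s(3s-4c)(5s-9c)$ is increasing and convex on $[\frac53c,\frac95c]$. Its tangent line at $\tS_{\min}$ therefore underestimates it, and the discarded quadratic term has the wrong sign for an upper bound.

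\textbf{The missing idea.} Keep $\tS_{\max}-\tS$ and restrict to $t\le\frac12$, so that $5\tS-9c+\frac{5(\tS_{\max}-\tS)}{2(1-t)}\le5\tS_{\max}-9c\le0$. Then, with $w=\tS_{\min}$, the main term is at most $w(3w-4c)(5\tS_{\max}-9c)$, because $\tS(3\tS-4c)\ge w(3w-4c)$.

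Your bound $(\tS-w)^2\le\frac{\delta}{3}(3\tS-5c)$ is fine. Moreover, $Q_t(w,\tS)$ is positive and at most
$Q_t(w,\tS_{\max})=(1+\frac{15}{2}t)w+(\frac{15}{2}t-1)\tS_{\max}+\frac{36}{5}c-\frac{126}{5}ct$.
For $\frac{15}{2}t>1$ this is at most its value at $\tS_{\max}=\frac95c$, namely $Q^0_t\coloneqq w+\frac{27}{5}c+\frac{75w-117c}{10}t$.

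This yields the paper's inequality $0\le w(3w-4c)(5\tS_{\max}-9c)+\frac{5\delta}{48t(1-t)}(Q^0_t)^2$. Writing $5\tS_{\max}-9c=5\delta-(9c-5w)$ makes it exactly linear in $\delta$, so no higher-order terms remain.

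Minimizing $(Q^0_t)^2/(t(1-t))$ gives $t_0(w)=\frac{2(5w+27c)}{95w-9c}\in[\frac49,\frac{53}{112}]$; this is neither $\frac12$ nor the left-gap $\tau$. The minimum value is $4(w+\frac{27}{5}c)(\frac{17}{2}w-\frac{63}{10}c)$, which produces exactly the denominator $\frac{1}{120}(2225w^2-420cw-1701c^2)$.

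With this correction, your pointwise reduction reaches the paper's key inequality more directly than the paper's route through the monotonicity arguments of Theorem~\ref{s3-left-gap}.
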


\begin{proof}
	Keeping the parameter $t$ free and arguing as in the proof of Theorem \ref{s3-left-gap}, for every $\frac53c\leq w\leq\tS_{\min}$ and $\frac49\leq t\leq\frac12$, we obtain $$0\leq w(3w-4c)(5\tS_{\max}-9c)+\frac{5(\tS_{\max}-w)}{48t(1-t)}\left(w+\frac{27}{5}c+\frac{75w-117c}{10}t\right)^2.$$ Since $5\tS_{\max}-9c=5(\tS_{\max}-w)-(9c-5w)$, it follows that $$\tS_{\max}-w\geq\Phi_t(w),$$ where $$\Phi_t(w)\coloneqq\frac{48t(1-t)w(3w-4c)(9c-5w)}{240t(1-t)w(3w-4c)+5\left(w+\frac{27}{5}c+\frac{75w-117c}{10}t\right)^2}.$$ For fixed $w$, differentiating $\Phi_t(w)$ with respect to $t$ shows that the maximum is attained at $$t=t_0(w)\coloneqq\frac{2(5w+27c)}{95w-9c}.$$ Moreover, $$t_0(w)\in\left[\frac49,\frac{53}{112}\right]\subset\left(0,\frac12\right)\quad\text{for }w\in\left[\frac53c,\frac95c\right].$$ The case $w=\frac95c$ is immediate since $\Phi_t\left(\frac95c\right)=0$. Substituting $t=t_0(w)$ gives $$\max_{\frac49\leq t\leq\frac12}\Phi_t(w)=\frac{120w(3w-4c)(9c-5w)}{2225w^2-420cw-1701c^2}.$$ Taking $w=\tS_{\min}$ proves the result.
\end{proof}

\begin{theorem}[Right endpoint gap]\label{s3-right-gap}
	Let $M$ be a closed surface immersed in $\Sn$ with parallel mean curvature vector and positive Gaussian curvature. If $$cS_+<\tS\leq\frac95c,$$ then $\tS\equiv\frac95c$.
\end{theorem}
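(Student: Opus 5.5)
We argue by contradiction, using Lemma \ref{estimatelem} with $w$ placed at the \emph{right} endpoint, mirroring the proof of Theorem \ref{s3-left-gap}. Since $cS_+>\frac53c$, we have $\frac53c<\tS\le\frac95c<2c$, so $K>0$ and all earlier identities apply. On this range the factors $\tS$, $3\tS-4c$ and $3\tS-5c$ are strictly positive, while $5\tS-9c\le0$.

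Take $w=\frac95c$ in Lemma \ref{estimatelem} and move everything to one side. Writing
\begin{equation*}
Q_t(\tS)\coloneqq\frac{2+15t}{2}\Big(\frac95c+\tS\Big)+\frac{36}{5}c-2\tS_{\max}-\frac{126}{5}ct,
\end{equation*}
the lemma gives
\begin{equation*}
\int_M\tS(3\tS-4c)(3\tS-5c)\left[(9c-5\tS)-\frac{5(\tS_{\max}-\tS)}{2(1-t)}-\frac{5(\frac95c-\tS)^2Q_t(\tS)^2}{16t(1-t)\tS(3\tS-4c)(3\tS-5c)}\right]\le0 .
\end{equation*}
Use $\tS_{\max}\le\frac95c$ to bound $\tS_{\max}-\tS\le\frac15(9c-5\tS)$, and note $(\frac95c-\tS)^2=\frac1{25}(9c-5\tS)^2$. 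Then the bracket is at least $(9c-5\tS)\,\Lambda_t(\tS)$, where
\begin{equation*}
\Lambda_t(S)\coloneqq\frac{1-2t}{2(1-t)}-\frac{(9c-5S)\,Q_t(S)^2}{80\,t(1-t)\,S(3S-4c)(3S-5c)} .
\end{equation*}
Here $Q_t$ is evaluated with $\tS_{\max}$ replaced by its worst admissible value, and we restrict to $0<t<\frac12$.

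The key step is to show that for a suitable $t=t_*\in(0,\frac12)$ we have $\Lambda_{t_*}(S)>0$ for all $S\in(cS_+,\frac95c]$. Then the integrand is pointwise nonnegative, so it vanishes identically. Hence $(9c-5\tS)\Lambda_{t_*}(\tS)\equiv0$, which forces $\tS\equiv\frac95c$.

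To prove positivity I would proceed in three steps.
\begin{itemize}
\item First show that the negative term of $\Lambda_t$ is monotone in $S$ on $[cS_+,\frac95c]$. The factor $9c-5S$ decreases to $0$ and the denominator increases, so the worst case should be the left endpoint $S=cS_+$. At worst one uses a convexity argument like the one applied to $g_t$ earlier.
\item Then optimize $t$ for $S$ near the left endpoint. The positivity condition at $S=x c$ reads
\begin{equation*}
40t(1-2t)\,x(3x-4)(3x-5)>(9-5x)\,\hat Q_t(x)^2 ,
\end{equation*}
where $\hat Q_t=Q_t/c$. This is a quadratic-type inequality in $t$; choose $t$ to maximize the difference, as with the critical point $\tau$ in Theorem \ref{s3-left-gap}.
\item After substituting the optimal $t$, the condition should reduce, up to a positive factor, to $\Theta_+(x)>0$. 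Since $S_+$ is the unique root of $\Theta_+$ in $(\frac53,\frac95)$ and $\Theta_+(\frac95)>0$, this holds exactly for $x>S_+$.
\end{itemize}

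The main obstacle is this last algebraic reduction. Pinning down exactly how $\tS_{\max}$ enters $Q_t$ matters, since the sharp constant depends on it, and so does checking that the optimal $t$ stays in $(0,\frac12)$ throughout the range. If the crude bound $\tS_{\max}\le\frac95c$ does not reproduce $\Theta_+$, I would instead keep $\tS_{\max}$ as a parameter. I would then bound $(\frac95c-\tS)^2$ and $\tS_{\max}-\tS$ in terms of $\tS_{\max}$ and $\tS$ as in \eqref{eq:pmc-third-pointwise}, and optimize jointly over $t$ and $\tS_{\max}\in(cS_+,\frac95c]$.
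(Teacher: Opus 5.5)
Your plan is essentially the paper's proof. Both take $w=\frac95c$ in Lemma \ref{estimatelem}, use $\tS_{\max}\le\frac95c$ to pull out the factor $9c-5\tS$, and optimize over $t$; the paper concludes through $\tS_{\min}$ and monotonicity rather than your pointwise positivity, which is equivalent. Your hedge is unnecessary, because the crude bound suffices. For $t\ge\frac2{15}$, using $\tS\le\tS_{\max}\le\frac95c$, one has $0<Q_t(\tS)\le(\frac95t+\frac{36}5)c$. Maximizing the ratio $40t(1-2t)/(\frac95t+\frac{36}5)^2$ (rather than the difference) then gives the $x$-independent choice $t_*=\frac4{17}$, which turns your positivity condition into exactly $\Theta_+(x)>0$.
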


\begin{proof}
	Assume that $\tS\not\equiv \frac95c$ and define $r_\beta\coloneqq\frac{1}{c}\tS_{\min}$. By Theorem \ref{new3rdint} and Lemma \ref{estimatelem}, for any $0<t\le \frac12$ and any constant $w$, we have
	\begin{equation*}
		\int_M\tS(3\tS-4c)(3\tS-5c)\left(5\tS-9c+\frac{5(\tS_{\max}-\tS)}{2(1-t)}\right)\ge-\frac{5\left(15t\tS_{\max}+\frac{36}{5}c-\frac{126}{5}ct\right)^2}{16t(1-t)}\int_M(\tS-w)^2.
	\end{equation*}
	Choose $w=\frac95c$. Then we get
	\begin{equation*}
		\int_M\tS(3\tS-4c)(3\tS-5c)\left(5\tS-9c+\frac{5(\tS_{\max}-\tS)}{2(1-t)}\right)\ge\int_M\frac{\left(15t\tS_{\max}+\frac{36}{5}c-\frac{126}{5}ct\right)^2}{-80t(1-t)}(5\tS-9c)^2.
	\end{equation*}
	Since $\tS_{\max}\le\frac95c$, for $0<t\le\frac12$, we have
	\begin{equation*}
		\begin{aligned}
			5\tS-9c+\frac{5(\tS_{\max}-\tS)}{2(1-t)}&=\frac{5\tS_{\max}+5(1-2t)\tS-18(1-t)c}{2(1-t)}\\
			&\le\frac{(1-2t)(5\tS-9c)}{2(1-t)} .
		\end{aligned}
	\end{equation*}
	It follows that
	\begin{equation*}
		\int_M(9c-5\tS)\left[\frac{2t-1}{2(1-t)}\tS(3\tS-4c)(3\tS-5c)+\frac{\left(15t\tS_{\max}+\frac{36}{5}c-\frac{126}{5}ct\right)^2}{80t(1-t)}(9c-5\tS)\right]\ge0.
	\end{equation*}
	By $\frac{5}{3}c<\tS\le\frac{9}{5}c$ and $\tS\not\equiv\frac{9}{5}c$, we have $\tS_{\min}<\frac{9}{5}c$ and
	\begin{equation*}
		\frac{2t-1}{2(1-t)}\tS_{\min}(3\tS_{\min}-4c)(3\tS_{\min}-5c)+\frac{\left(27ct+\frac{36}{5}c-\frac{126}{5}ct\right)^2}{80t(1-t)}(9c-5\tS_{\min})\ge0,
	\end{equation*}
	which yields $$40t(2t-1)r_\beta(3r_\beta-4)(3r_\beta-5)+\left(\frac95t+\frac365\right)^2(9-5r_\beta)\geq0.$$ To optimize the estimate near the upper endpoint, we maximize the coefficient $$\Lambda(t)\coloneqq\frac{40t(1-2t)}{\left(\frac95t+\frac365\right)^2},\quad0<t\leq\frac12.$$ A direct calculation gives that $\Lambda(t)$ attains its unique maximum at $t=\frac4{17}$. With this optimal choice, the preceding inequality becomes $$\Theta_+(r_\beta)\coloneqq1125r_\beta^3-3375r_\beta^2+9790r_\beta-13122\leq0.$$ The polynomial $\Theta_+$ has the unique root $S_+$ in $\left(\frac53,\frac95\right)$, and $$\Theta_+(r_\beta)>0\quad\text{for }r_\beta\in\left(S_+,\frac95\right].$$ Hence $r_\beta>S_+$ gives a contradiction. Therefore $\tS\equiv\frac95c$, which completes the proof.
\end{proof}

We next derive a small-oscillation rigidity consequence from the quantitative estimate obtained in Theorem \ref{main1}.

\begin{proof}[Proof of Theorem \ref{oscillation}]
Set $c=1+H^2$. Since $H$ is constant, we have $S_{\max}-S_{\min}=\tS_{\max}-\tS_{\min}$. If $\tS_{\max}<cS_-$, then Theorem \ref{s3-left-gap} applies. If $\tS_{\min}>cS_+$, then Theorem \ref{s3-right-gap} applies. Otherwise, $$\tS_{\min}\leq cS_+.$$ Since the case $\widetilde S\equiv\frac53c$ has already been covered, we may assume that $\tS\not\equiv\frac53c$. The function $G$ is decreasing on $\left[\frac53,\frac95\right]$. Therefore, by Theorem \ref{s3-interior},
\begin{equation*}
	S_{\max}-S_{\min}=\tS_{\max}-\tS_{\min}\ge cG\left(\frac{\tS_{\min}}{c}\right)\ge cG(S_+)=c\varepsilon_0=\varepsilon(1+H^2),
\end{equation*}
which contradicts the assumption. Hence $\tS\equiv\frac53c$ or $\tS\equiv\frac95c$. Since $S=\tS+2H^2$, it follows that $S\equiv\frac53+\frac{11}{3}H^2$ or $S\equiv\frac95+\frac{19}{5}H^2$.
\end{proof}

\subsection{Characterization of the endpoint cases via Yau's classification theorem}
Now we can give the proof of Proposition \ref{yauchara}.
\begin{proof}[Proof of Proposition \ref{yauchara}]
	If $H=0$, then $M$ is a minimal surface in $\Sn$, and the conclusion follows directly from Calabi's classification theorem. Hence we may assume $H\neq 0$. By Yau's classification theorem, a PMC surface in a space form is either contained in a $3$-dimensional totally umbilical submanifold, or is a minimal surface of a totally umbilical hypersurface.
	\begin{enumerate}
		\item In the first case, $M$ is contained in a $3$-dimensional umbilical submanifold $$Q^3\subset\Sn$$ and has constant mean curvature as a surface in $Q^3$. Since $Q^3$ is totally umbilical in $\Sn$, it is a $3$-dimensional space form of positive constant curvature. Moreover, the assumption $K>0$ implies that the orientable double cover of $M$ is a $2$-sphere. Hence, by the Hopf's theorem for constant mean curvature $2$-spheres in $3$-dimensional space forms \cite{Cher83}, $M$ is totally umbilical in $Q^3$. Since $Q^3$ is totally umbilical in $\Sn$, the composition formula for second fundamental forms implies that $M$ is totally umbilical in $\Sn$.  Therefore $$h(X,Y)=g(X,Y)\HH$$ for all $X,Y\in TM$, where $\HH$ is the mean curvature vector of $M\subset\Sn$.  Hence we obtain $$\wt h=h-\HH g=0.$$ Therefore we obtain $\wt S=0$.
		\item It remains to consider the second case. Then $M$ is minimal in an umbilical hypersurface $$Q^{N-1}\subset\Sn.$$ Let $B^Q$ be the second fundamental form of $Q^{N-1}$ in $\Sn$. Since $Q^{N-1}$ is totally umbilical, we may write $$B^Q(X,Y)=\mu g(X,Y)\xi$$ for some unit normal vector field $\xi$. Let $h^Q$ denote the second fundamental form of $M$ in $Q^{N-1}$. The composition formula gives $$h(X,Y)=h^Q(X,Y)+\mu g(X,Y)\xi.$$ Since $M$ is minimal in $Q^{N-1}$, we have $\operatorname{tr}h^Q=0$. Hence the mean curvature vector of $M$ in $\Sn$ is $\HH=\mu\xi$, which implies that $H^2=\mu^2$. The Gauss equation for the totally umbilical hypersurface $Q^{N-1}$ implies that $Q^{N-1}$ has sectional curvature $$\sec_Q=1+\mu^2=1+H^2=c.$$ Thus $Q^{N-1}$ is a round sphere of radius $\frac{1}{\sqrt{c}}$. Moreover, $$\wt h=h-\HH g=h^Q.$$ Consequently, $\wt S=|\wt h|^2=|h^Q|^2$. Thus $M$ is a minimal surface in a round sphere of sectional curvature $c$, and its second fundamental form in that sphere has squared norm $\wt S$. If $\wt S=cS(s)$, then by Gauss equation and the definition of $S(s)$ we have $$K=c-\frac{c}{2}S(s)=\frac{2c}{s(s+1)}.$$ By Calabi's classification theorem, $M$ is, up to a rigid motion, Calabi's $2$-sphere $$S^2\left(\frac{2c}{s(s+1)}\right)\to\mS^{2s}(c)\subset\Sn,\quad s=1,2,3,4,\cdots,$$ where the last inclusion is totally umbilical. For $s=1,2,3,4$, the corresponding values of the curvature $K$ are $$1+H^2,\quad\frac{1+H^2}{3},\quad\frac{1+H^2}{6},\quad\frac{1+H^2}{10},\quad\cdots,\quad\frac{2(1+H^2)}{s(s+1)},\quad\cdots.$$
	\end{enumerate}
	Therefore, we complete the proof.
\end{proof}

\par\bigskip
\noindent\textbf{Acknowledgement}

Jianquan Ge is partially supported by NSFC (No. 12571049) and the Fundamental Research Funds for the Central Universities. F. G. Li is partially supported by NSFC (No. 12271040 and 12501061), the Guangdong Provincial Association for Science and Technology Youth Talent Support Program (No. SKXRC2026413), and the Research Start-up Funding of Beijing Institute of Technology (No. 5640011253301).

\par\bigskip
\noindent\textbf{Data Availability Materials}

Data sharing is not applicable to this paper as no new data were created or analyzed in this study.

\par\bigskip
\noindent\textbf{Conflict of Interest}

The authors state that there is no conflict of interest.


\begin{thebibliography}{99}

\bibitem{Alen94} H. Alencar and M. do Carmo, {\em Hypersurfaces with constant mean curvature in spheres}, Proc. Amer. Math. Soc., {\bf 120} (1994), 1223--1229.

\bibitem{Benk79} K. Benko, M. Kothe, K. D. Semmler and U. Simon, {\em Eigenvalues of the Laplacian and curvature}, Colloq. Math., {\bf 42} (1979), 19--31.

\bibitem{Bolt88} J. Bolton, G. R. Jensen, M. Rigoli and L. M. Woodward, {\em On conformal minimal immersions of $\mathbb{S^2}$ into $\mathbb{C}P^n$}, Math. Ann., {\bf 279} (1988), 599--620.

\bibitem{Cala67} E. Calabi, {\em Minimal immersions of surfaces in euclidean spheres}, J. Differential Geom., {\bf 1} (1967), 111--125.

\bibitem{Chan93a} S. P. Chang, {\em On minimal hypersurfaces with constant scalar curvatures in $S^4$}, J. Differential Geom., {\bf 37} (1993), 523--534.

\bibitem{Chan93b} S. P. Chang, {\em A closed hypersurface with constant scalar and mean curvatures in $\mathbb{S}^4$ is isoparametric}, Comm. Anal. Geom., {\bf 1} (1993), 71--100.

\bibitem{Chen73a} B. Y. Chen, {\em On the surface with parallel mean curvature vector}, Indiana Univ. Math. J., {\bf 22} (1973), 655--666.

\bibitem{Chen73b} B. Y. Chen, {\em Geometry of submanifolds}, Mercer Dekker, New York, 1973.

\bibitem{Chen10} B. Y. Chen, {\em Submanifolds with parallel mean curvature vector in Riemannian and indefinite space forms}, Arab J. Math. Sci., {\bf 16} (2010), 1--46.

\bibitem{Chen90} Q. M. Cheng and H. Nakagawa, {\em Totally umbilic hypersurfaces}, Hiroshima Math. J., {\bf 20} (1990), 1--10.

\bibitem{Cher70} S. S. Chern, {\em On the minimal immersions of the two-sphere in a space of constant curvature}, in: Problems in Analysis, Princeton University Press, Princeton, 1970, 27--40.

\bibitem{Cher83} S. S. Chern, {\em On surfaces of constant mean curvature in a three-dimensional space of constant curvature}, in: Geometric Dynamics, Springer Lecture Notes (vol. 1007), 1983, 104--108.

\bibitem{CDK70} S. S. Chern, M. do Carmo, and S. Kobayashi, {\em Minimal submanifolds of a sphere with second fundamental form of constant length}, in: Functional Analysis and Related Fields, Springer, Berlin, 1970, 59--75.

\bibitem{Ding11} Q. Ding and Y. L. Xin, {\em On Chern's problem for rigidity of minimal hypersurfaces in the spheres}, Adv. Math., {\bf 227} (2011), 131--145.

\bibitem{Ding25} W. R. Ding, J. Q. Ge and F. G. Li, {\em Pinching rigidity of minimal surfaces in spheres}, Sci. China Math., {\bf 68} (2025), 2189--2206.

\bibitem{Ding26} W. R. Ding, J. Q. Ge and F. G. Li, {\em On Simon's third gap conjecture for minimal surfaces in spheres}, arXiv:2603.03070.

\bibitem{doCa71} M. do Carmo and N. Wallach, {\em Minimal immersions of spheres into spheres}, Ann. of Math., (2) {\bf 93} (1971), 43--62.

\bibitem{Guad83} I. Guadalupe and L. Rodriguez, {\em Normal curvature of surfaces in space forms}, Pacific J. Math., {\bf 106} (1983), 95--103.

\bibitem{Kozl84} M. Kozlowski and U. Simon, {\em Minimal immersion of $2$-manifolds into spheres}, Math. Z., {\bf 186} (1984), 377--382.

\bibitem{LeiX21} L. Lei, H. W. Xu, and Z. Y. Xu, {\em On the generalized Chern conjecture for hypersurfaces with constant mean curvature in a sphere}, Sci. China Math., {\bf 64} (2021), 1493--1504.

\bibitem{LiSi03} H. Li and U. Simon, {\em Quantization of curvature for compact surfaces in $S^n$}, Math. Z., {\bf 245} (2003), 201--216.

\bibitem{Peng83a} C. K. Peng and C. L. Terng, {\em Minimal hypersurfaces of spheres with constant scalar curvature}, in: Seminar on Minimal Submanifolds, Princeton University Press, Princeton, 1983, 179--198.

\bibitem{Peng83b} C. K. Peng and C. L. Terng, {\em The scalar curvature of minimal hypersurfaces in spheres}, Math. Ann., {\bf 266} (1983), 105--113.

\bibitem{Simo68} J. Simons, {\em Minimal varieties in Riemannian manifolds}, Ann. of Math. (2), {\bf 88} (1968), 62--105.

\bibitem{Tang23} Z. Z. Tang and W. J. Yan, {\em On the Chern conjecture for isoparametric hypersurfaces}, Sci. China Math., {\bf 66} (2023), 143--162.

\bibitem{XuXu13} H. W. Xu and Z. Y. Xu, {\em The second pinching theorem for hypersurfaces with constant mean curvature in a sphere}, Math. Ann., {\bf 356} (2013), 869--883.

\bibitem{XuXu17} H. W. Xu and Z. Y. Xu, {\em On Chern's conjecture for minimal hypersurfaces and rigidity of self-shrinkers}, J. Funct. Anal., {\bf 273} (2017), 3406--3425.

\bibitem{XuXu24} H. W. Xu and Z. Y. Xu, {\em The Chern conjecture for minimal hypersurfaces in a sphere and its related problems (in Chinese)}, Sci. Sin. Math., {\bf 54} (2024), 1723--1734.

\bibitem{XuXupre} H. W. Xu and Z. Y. Xu, {\em A pinching theorem for sectional curvature of PMC surfaces in a sphere}, preprint.

\bibitem{Yang98} H. C. Yang and Q. M. Cheng, {\em Chern's conjecture on minimal hypersurfaces}, Math. Z., {\bf 227} (1998), 377--390.

\bibitem{Yau74} S. T. Yau, {\em Submanifolds with constant mean curvature I}, Amer. J. Math., {\bf 96} (1974), 346--366.

\end{thebibliography}
\end{document}